\documentclass[a4paper,11pt]{amsart}

\usepackage{amssymb}

  \usepackage[colorlinks=true,linkcolor=black,citecolor=black,filecolor=black,urlcolor=black,menucolor=black]{hyperref}
  \usepackage[english]{babel}
  \usepackage{amsmath}
  \usepackage{mathtools}
  \usepackage[utf8]{inputenc}
  \usepackage{xcolor}
  \usepackage{tikz}

\usepackage[
backend=biber,
giveninits=true,
url=false,
maxbibnames=4
]{biblatex}
 \AtEveryBibitem{\clearfield{issn}}
 \AtEveryBibitem{\clearfield{isbn}}
 \AtEveryBibitem{\clearlist{language}} 
 \AtEveryBibitem{\clearfield{month}}
   \renewbibmacro{in:}{}

\usepackage{geometry}
  \theoremstyle{plain}
  \newtheorem{theorem}{Theorem}%[section]
  \newtheorem*{theorem*}{Theorem}
  \newtheorem{proposition}{Proposition}

  \newtheorem{lemma}{Lemma}

  \theoremstyle{definition}
  \newtheorem{definition}{Definition}

  \def \N{\mathbb{N}}
  \def \R{\mathbb{R}}

	\def \S{\mathbb{S}}

  \newcommand {\sign} {\mathop \mathrm{sign}}

  \newcommand {\supp} {\mathop \textup{supp}}
  
 \renewcommand {\div} {\textup{div}}

 \newcommand {\dist} {\textup{dist}}

 \newcommand {\Per} {\textup{Per}}

  \newcommand {\eps} {\varepsilon}
  \newcommand {\Hher} {H}
  \newcommand {\uher} {u}

\DeclareMathOperator*{\argmin}{\mathop \textup{arg\,min}}
\DeclareMathOperator{\med}{\mathop \textup{med}}

\DeclareMathOperator*{\Lip}{Lip}

\DeclareMathOperator*{\erf}{erf}

\DeclareMathAlphabet{\mathbbold}{U}{bbold}{m}{n}

\begin{document}
\author{Fabius Kr\"amer, Tim Laux}

\address{Institut f{\"ur} Mathematik \& Interdisziplin{\"a}res Zentrum f{\"u}r Wissenschaftliches Rechnen, Universit{\"a}t Heidelberg, Im Neuenheimer Feld 205, 69120 Heidelberg, Germany}
\email{\texttt{\{f.kraemer,tim.laux\}@math.uni-heidelberg.de}}

\title[Continuous Median Filter]{Strong Convergence of Continuous Median Filter Scheme}
  \begin{abstract}

    We introduce a continuous-time counterpart of the median filter that gradually denoises a given image. In the limit of vanishing stencil size, our results show that the evolution converges to level-set mean curvature flow. Surprisingly, and for the first time for any median-type filter scheme, we can prove the convergence of energies in this limit. Such strong convergence results are of major interest as they often have to be assumed in the literature to prove convergence of related schemes. Moreover, this result carries crucial geometric information, namely the unit multiplicity of interfaces. The proof relies on a compensated compactness argument inspired by the work of Evans and Spruck.
    \medskip
    
  \noindent \textbf{Keywords:} 
  Mean Curvature Flow, Energy Convergence, Compensated Compactness, Median Filter, Diffusion Generated Motion, Viscosity Solution, Gradient Flow

  \medskip

\noindent \textbf{Mathematical Subject Classification (MSC2020)}:
53E10; %Flows related to mean curvature
35D40 (Primary); %Viscosity solutions to PDEs
49Q20;	%Variational problems in a geometric measure-theoretic setting
65M12; %Stability and convergence of numerical methods for initial value and initial-boundary value problems involving PDEs
94A08 (Secondary). % Image processing (compression, reconstruction etc.) in information and communication theory

  \end{abstract}
  \maketitle
  \tableofcontents

\section{Introduction}

Median filters are popular, nonlinear---but computationally efficient---image processing techniques to remove noise while preserving sharp edges.
We propose and analyze a continuous-time counterpart of the median filter that gradually denoises a given image. 
We show that this filter can be viewed as a gradient flow of an associated total variation (TV) energy and therefore efficiently navigates this complex energy landscape.

\medskip

Our main result shows that, in the limit of vanishing stencil size, our continuous-time median filter strongly converges to the unique viscosity solution of level-set mean curvature flow.
A curious feature of this continuous-time median filter is that, in contrast to all other known numerical methods, we can prove that the convergence is \emph{strong} in the sense that the energy converges.
Such strong convergence results are of major interest, as they often have to be assumed in the literature to prove convergence of related schemes, cf.~\cite{LauxOttoBV}.
Moreover, this strong convergence gives crucial geometric insight, namely that the level-sets do not fold over or oscillate. In other words, the multiplicity of interfaces is~$1$.

\medskip

The key tool to prove the strong convergence of our scheme is a uniform-in-time $L^1$ bound on the (non-local) mean curvature, which allows us to carry out a compensated compactness argument inspired by Evans and Spruck~\cite{evans_spruck_IV}.
More precisely, we show that the $L^1$-norm of the non-local mean curvature is non-increasing in time, which is an interesting and desired property for a filter in its own right. In fact, Ambrosio and Masnou~\cite{AmbrosioMasnou}, and Zhu and Chan~\cite{ZhuChan} propose to regularize with this $L^1$-norm of the mean curvature to keep sharp, and relatively smooth edges. 
It is remarkable that our new scheme automatically decreases this functional over time.
    
\medskip

Median filters were first developed for signal processing by Tukey~\cite{Tukey} and generalized to the two-dimensional framework of images by Pratt~\cite{pratt} and Frieden~\cite{Frieden}. 
Huang, Yang, and Tang~\cite{Huang} developed the first computationally efficient implementation of the filter for images. 
Moreover, variants on random point clouds in high dimensions can be used for data clustering and classification tasks~\cite{ullrich_median}.
Median filters can also be viewed as efficient computational methods for level-set mean curvature flow~\cite{OsherSethian}. This was shown by Oberman in his work~\cite{Oberman}, which laid a central foundation for consecutive analytical studies on median filters, including the present one. 
The only continuous time median scheme we could find in the literature is~\cite{zbMATH07200350}. 
However, the analytical part there only focuses on stationary configurations.

\medskip

Median filters are intimately connected to TV regularization, which is a well-established denoising technique, also known as ROF model, introduced by Rudin, Osher, and Fatemi~\cite{RudinOsherFatemi}.
Indeed, Alliney~\cite{alliney92,alliney94} and Li and Osher~\cite{LiOsher} pointed out that median filters are efficient methods for solving certain TV optimization problems.  
Rudin, Osher, and Fatemi instead solve their TV optimization problem via gradient flow-dynamics, now called the TV flow, see~\cite{BellettiniCasellesNovaga} for the analysis.
Our new method combines these two approaches in the sense that our continuous-time median filter is indeed a gradient flow of the TV energy (like the one in~\cite{RudinOsherFatemi}) but has the computational benefit of a median filter (as in~\cite{alliney92,alliney94, LiOsher}).
Our main result proves the convergence of precisely this natural quantity, the TV energy.

\medskip

Recall that a gradient flow is characterized by the choice of energy (here the TV norm) and a Riemannian metric on the space of configurations. 
In case of the standard TV flow~\cite{RudinOsherFatemi}, the metric is the standard $L^2$-scalar product. 
The metric corresponding to our new median filter is related to the geometric (but degenerate) version used for mean curvature flow, in which the $L^2$-scalar product is weighted by the gradient. 
The corresponding limit dynamics, level-set mean curvature flow, were introduced by Osher and Sethian~\cite{OsherSethian}, with first existence and uniqueness theories provided by Evans and Spruck~\cite{evans_spruck_I} and Chen, Giga, and Goto~\cite{ChenGigaGoto}.
Also metrics related to negative norms are of interest, most notably the $H^{-1}$-metric, see the work of Giga et al.~\cite{GigaGiga_2nd4th,GigaKohn_singular,GigaKurodaMatsuoka}.
Another popular metric is the Otto-Wasserstein metric, for which existence is often shown by the JKO scheme due to Jordan, Kinderlehrer, and Otto~\cite{JKO}. 
The existence of weak solutions of the TV flow with respect to the Otto-Wasserstein metric is intricate because of a degeneracy at zero.  Carlier and Poon~\cite{CarlierPoon} provided a conditional result, and the recent work of Lin and Santambrogio~\cite{LinSantambrogio} constructs a lower bound by suitably modifying the JKO scheme.

\medskip

A main motivation for the present work is the revelation by Esedo\u{g}lu, Guo, and Li~\cite{EsedoguleGuoLi} that the discrete-in-time median filter is equivalent to performing the celebrated Merriman-Bence-Osher thresholding algorithm~\cite{MBO92} on each level-set, which simply alternates between linear diffusion and pointwise thresholding. 
Esedo\u{g}lu and Otto~\cite{EsedogluOtto} observed that the MBO scheme can be interpreted as a minimizing movements scheme, which  led to several \emph{conditional} convergence results~\cite{LauxOttoBV,LauxOttoBrakke,LauxOttoDeGiorgi} that all hinge on the assumption of energy convergence.
In the present paper, we prove this convergence for a related, time-continuous version of the median filter. 
It is known  that the MBO scheme itself exhibits this strong convergence under certain geometric conditions, namely as long as the limiting mean curvature flow stays smooth~\cite{SwartzYip} or only has mean convex singularities~\cite{FuchsLaux}. 
These are the only generic singularities in low dimensions, as conjectured by Huisken, shown first by Colding and Minicozzi~\cite{ColdingMinicozzi}, and refined by~Choi, Haslhofer, Hershkovits, and White~\cite{ChoiHaslhoferHershkovitsWhite}. 
Supported by this and the present work, we conjecture that also the MBO scheme converges strongly for generic initial conditions.

\medskip

The key to our strong convergence result is an estimate inspired by Evans and Spruck~\cite{evans_spruck_IV}, which allows us to apply a compensated compactness argument. 
This estimate is a uniform-in-time, $L^1$-in-space estimate on the non-local mean curvature.
In spirit, this estimate is connected to non-local calibrations as introduced by Cabr\'e~\cite{Cabre} for graphs with vanishing non-local mean curvature and Pagliari~\cite{Pagliari} for half-spaces. 
In their static case, these non-local calibrations have vanishing non-local divergence. 
In our time-dependent (and non-graphical) case, one cannot expect this. Even the construction of much simpler \emph{local} time-dependent analogs of calibrations are more complicated as can be seen at the calibrations for the two-dimensional network flow~\cite{FHLS}, and of course their divergence is related to the normal velocity of the moving surfaces. 
Our present result shows that the non-local normal vector fields of the level-sets give rise to an almost-calibration with bounded non-local divergence in the above uniform-in-time and $L^1$-in-space sense. 
We show that this estimate is sufficient to carry out a compensated compactness argument, as these weakly-$\ast$ pre-compact measures are paired with the uniformly converging level-set function.

\section{Main Results}\label{sec:main_result}
Here and in the rest of the paper, we fix $\R^d$ with $d\in \N$ as our domain. 
Let $K\colon \R^d\times\R^d\to\R$ be a given kernel.
The classical median filter is an iterative scheme that, starting from a continuous function $u_0\colon \R^d\to \R$, successively computes 
\begin{equation}\label{eq:med_classical}
    u_n = \med_{K} u_{n-1},
\end{equation}
where the median is defined by 
\begin{equation*}
  \med_K u(x) \in \argmin_{c \in \R} \int K(x,y) |c - u(y)| \,dy.
\end{equation*}
The kernel $K=K_h$ has a characteristic length scale that we call $\sqrt{h}$ (we will later choose a Gaussian of variance $\sim h$), and we can rearrange~\eqref{eq:med_classical} into the more suggestive form
\[
    \frac{u_n-u_{n-1}}{h} = \frac{\med_{K_h} u_{n-1} - u_{n-1}}{h}.
\]
Now, $h$ appears both as a squared length scale (on the right) and as a time scale (on the left). 
We break this symmetry and introduce another time scale $\tau>0$ so that the more general scheme becomes
\begin{equation}\label{eq:med_tau}
    \frac{u_n-u_{n-1}}{\tau} = \frac{\med_{K_h} u_{n-1} - u_{n-1}}{h},
\end{equation}
or equivalently
$
    u_n = \frac{\tau}{h} \med_{K_h} u_{n-1} + (1-\frac{\tau}{h})u_{n-1},
$ 
which is reminiscent of residual neural networks that keep track of information of previous layers (here, $u_{n-1}$). 

We are interested in the case where $\tau\ll h\ll 1$, i.e., we rather do many iterations, then we obtain the continuous-time median filter
\[
    \partial_t u_h = \frac{\med_{K_h} u_h - u_h}{h}.
\]
Drawing again analogies to neural networks, this would be the neural ODE associated to a neural network.

In the rest of the paper we study the time-continuous median filter
\begin{equation}\label{def:semi_discrete_scheme}
\left\{ 
\begin{aligned}
  \partial_t u_h &= \frac{\med_{G_h} u_h - u_h}{h} && \text{in } \R^d\times(0,\infty),\\
  u_h &= u_0 && \text{on } \R^d\times\{t=0\}
  \end{aligned}
\right.
\end{equation}
with $G_h$ a centered Gaussian of variance $2h$, i.e., the heat kernel at time $h$. 
This is a natural choice, as it is the same kernel that appears in the MBO scheme~\cite{MBO92}. 
However, we believe that our analysis applies to a large class of kernels with the only important structural assumption of positivity $K_h\geq0$. Of course, anisotropic kernels will lead to anisotropic motion laws in the limit. We leave this question for future research. In the rest of the present paper we will abbreviate $\med_{h} := \med_{G_h}$ for easier readability.

The median filter dissipates the following energy 
\begin{align}
  E_h(u) :&= \int_{\R^d} \int_{\R^d} G_h(x,y) \Big|\frac{u(x) - u(y)}{\sqrt{h}}\Big| \,dx \, dy. \label{def:E_h}
\end{align}
We prove this in Lemma~\ref{lem:mono_energie} below, but this should be even true for any kernel $K_h$ that is symmetric (i.e., $K_h(x,y)=K_h(y,x)$) and non-negative (i.e., $K_h(x,y)\geq0$), as formally (see Lemma~\ref{lem:mono_energie} for a rigorous proof with $K_h = G_h$) it holds
\begin{multline*}
    \frac{d}{dt} \int_{\R^d} \int_{\R^d} K_h(x,y) \Big|\frac{u_h(x) - u_h(y)}{\sqrt{h}}\Big| \, dy \,dx \\
    =  \int_{\R^d} \frac{1}{\sqrt{h}}\int_{\R^d} K_h(x,y) \sign(u_h(x) - u_h(y)) \,dy \frac{\med_{K_h} u_h(x) - u_h(x)}{h}\, dx \leq 0 .
\end{multline*}
This is a significant generalization of the monotonicity in~\cite{EsedoguleGuoLi}, where non-negativity of the Fourier transform is assumed.

It is easy to see that $E_h$ $\Gamma$-converges to the limit energy $E$ given by 
\[
    E(u)=2c_0 \int_{\R^d} |\nabla u| \, dx\quad \text{with } c_0 = \frac1{\sqrt{\pi}},
\]
for example, by integrating~\cite[Proposition~A.1]{EsedogluOtto} over all level sets.
Due to the monotonicity of the scheme, one can show the convergence to the viscosity solution of level-set mean curvature flow.
\begin{proposition}[Convergence to Viscosity Solution]\label{the:viscosity}
  For initial values $u_0 \in C^3_b(\R^d\setminus S) \cap C^{0,1}(\R^d)$ that are constant outside a bounded set and where $S \subset \R^d$ is closed with $\overline{\R^d\setminus S} = \R^d$, let $u_h$ be the unique classic solution of the median filter~\eqref{def:semi_discrete_scheme} starting from  $u_0$. Then $u_h \rightarrow u$ uniformly as $h \downarrow 0$ and $u$ is the unique viscosity solution of the level-set mean curvature flow equation
  \begin{equation}\label{eq:level_set_equation}
    \partial_t u = \nabla \cdot \left(\frac{\nabla u}{|\nabla u|} \right)|\nabla u|
  \end{equation}
   in the sense of  Definition~\eqref{def:viscosity}.  
\end{proposition}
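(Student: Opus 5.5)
We follow the Barles--Souganidis strategy: we establish monotonicity, stability and consistency of the scheme, and then pass to the limit with half-relaxed limits. Uniqueness of viscosity solutions (Evans--Spruck, Chen--Giga--Goto) then makes the whole family converge. Because the scheme is continuous in time, we do not iterate a discrete operator. Instead we work with the ODE $\partial_t u_h = (\med_h u_h - u_h)/h$ in the Banach space of bounded continuous functions. In this space $\med_h$ is monotone, commutes with constants ($\med_h(u+c)=\med_h u + c$) and with translations, and is nonexpansive in $L^\infty$. The right-hand side is therefore Lipschitz with constant $2/h$, which gives the unique classic solution. By a standard ODE comparison (Gronwall on $(v_h-u_h)_+$), the flow is order preserving and $L^\infty$-contractive, and it preserves constants.

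\textbf{Step 1 (uniform estimates).} Translation invariance together with the $L^\infty$ contraction yields $\|u_h(\cdot+z,t)-u_h(\cdot,t)\|_\infty \le \|u_0(\cdot+z)-u_0\|_\infty \le \Lip(u_0)|z|$, i.e.\ a uniform spatial Lipschitz bound. Since $u_0$ is constant outside a bounded set, $u_h$ also stays between $\inf u_0$ and $\sup u_0$. For equicontinuity in time we compare with explicit barriers. At points of $\R^d\setminus S$ we use the $C^3_b$ regularity and consistency (Step 2), which give $|\med_h u_0 - u_0|/h \le C$ locally. For the general estimate $\|u_h(t)-u_0\|_\infty\le C\sqrt t$ we build sub- and supersolutions from shrinking spheres: $\med_h$ of radial indicator-type profiles has explicitly computable behavior. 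Combining the spatial Lipschitz bound with the comparison principle then gives the time modulus uniformly in $h$. Arzel\`a--Ascoli yields uniformly convergent subsequences on compacts. Uniform convergence on all of $\R^d$ follows because the solution is constant near infinity up to a uniformly controlled error, again by comparison with large spheres.

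\textbf{Step 2 (consistency).} This is the main obstacle. For $\phi\in C^3$ with $\nabla\phi(x)\neq 0$ we need the expansion
\[
\frac{\med_h\phi(x)-\phi(x)}{h} = \Delta\phi(x) - \frac{\nabla\phi\cdot D^2\phi\,\nabla\phi}{|\nabla\phi|^2}(x) + o(1),
\]
locally uniformly, in the spirit of Oberman's analysis, now with Gaussian weights. To prove it, write $\med_h\phi(x)=\phi(x)+\lambda$, where $\lambda$ is defined by requiring that the Gaussian measure of $\{\phi>\phi(x)+\lambda\}$ equal $1/2$. After the rescaling $y=x+\sqrt h z$ this becomes a statement about the half-space perturbed by the quadratic term $\tfrac{\sqrt h}{2}z\cdot D^2\phi z$. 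Solving for $\lambda$ to order $h$ yields exactly the tangential trace of $D^2\phi$. The $C^3$ remainder and the Gaussian tails are handled by restricting to $|z|\le h^{-1/6}$. At critical points $\nabla\phi(x)=0$ we need the relaxed inequality required in Definition~\ref{def:viscosity}: the semicontinuous envelopes involve $\lambda_{\min}(D^2\phi)\le\cdot\le\lambda_{\max}(D^2\phi)$, which follows since the median of a quadratic form lies between $h$ times its extreme eigenvalues up to $O(h^{3/2})$. Alternatively, we use the Evans--Spruck reduction to test functions with $D^2\phi(x)=0$ at critical points, for which $\med_h\phi-\phi=o(h)$.

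\textbf{Step 3 (limit).} Let $u$ be a subsequential limit and let $\phi$ touch $u$ from above at $(x_0,t_0)$ with a strict maximum. By uniform convergence, $u_h-\phi$ has local maxima at points $(x_h,t_h)\to(x_0,t_0)$. At such a point $\partial_t u_h=\partial_t\phi$, and by monotonicity and constant-commutation of $\med_h$ we get $\med_h u_h(x_h)\le \med_h\phi(x_h)+(u_h-\phi)(x_h)$. This requires a global maximum, which we obtain by localizing $\phi$ with a standard cutoff; the Gaussian tails contribute only $O(e^{-c/h})$. Hence
\[
\partial_t\phi(x_h,t_h)\le \frac{\med_h\phi(x_h,t_h)-\phi(x_h,t_h)}{h}.
\]
Letting $h\to0$ with Step 2 gives the subsolution inequality; the supersolution case is symmetric, and $u(\cdot,0)=u_0$ by Step 1. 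Uniqueness of viscosity solutions then identifies $u$ and upgrades subsequential convergence to convergence of the whole family $u_h\to u$.
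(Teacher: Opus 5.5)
Your architecture (a monotone, constant- and translation-invariant, $L^\infty$-contractive scheme; a uniform Lipschitz bound; consistency; a touching argument; uniqueness) is the same Barles--Souganidis framework the paper uses. However, your treatment of critical points of the test function has a genuine gap.

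\emph{The eigenvalue claim is false.} Take $d=2$, $\phi(y)=\frac{\lambda}{2}|y|^2$ and $x=0$. Then $\phi(\sqrt{2h}N)=\lambda h|N|^2$ with $|N|^2\sim\chi^2_2$, whose median is $2\ln 2$. Hence $\frac{\med_h\phi-\phi}{h}(0)=2\ln 2\,\lambda$, whereas $F^*(0,\lambda I)=F_*(0,\lambda I)=\lambda$. So at critical points of general smooth test functions the scheme is \emph{not} consistent with the envelope interval $[\operatorname{tr}A-\lambda_{\max},\operatorname{tr}A-\lambda_{\min}]$. (This interval agrees with your $[\lambda_{\min},\lambda_{\max}]$ only for $d=2$.) Restricting the test class is therefore mandatory, not an alternative.

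\emph{The fallback is incomplete.} Using test functions with $D^2\phi=0$ where $\nabla\phi=0$ points the right way. But knowing $\med_h\phi-\phi=o(h)$ \emph{at} the critical point is not what Step~3 uses. You evaluate at $(x_h,t_h)\to(x_0,t_0)$, where $\nabla\phi(x_h,t_h)$ can be nonzero but arbitrarily small, so the Step~2 expansion (error $\sim\sqrt h/|\nabla\phi|$) gives nothing there. What you actually need is
\[
\limsup_{h\downarrow 0,\,(y,t)\to(x_0,t_0)}\frac{\med_h\phi(\cdot,t)-\phi(\cdot,t)}{h}(y)\le 0,
\]
which you neither state nor prove.

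\emph{How the paper handles this.} Its doubling of variables with $u_h(x,t)-|x-y|^4/\delta-\phi(y,t)$ is built to deliver exactly this bound. In the singular case one gets $x_\delta=y_\delta$, and $u_h$ is touched at $(x_{h,\delta},t_{h,\delta})$ by $|x-y_{h,\delta}|^4/\delta+\phi(y_{h,\delta},t)$. The purely time-dependent part leaves the median by \eqref{eq:med_const}. Consistency near a critical point is then needed only for the explicit function $|\cdot|^4/\delta$, where $F^*(0,0)=0$, and \eqref{eq:F_mon_happy} transfers the inequality back to $\phi$. Although \eqref{eq:consistency_upper} is stated for general $\phi$, at critical points the paper only applies it to $|\cdot|^4/\delta$.

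\emph{A smaller slip in Step~2.} With the cutoff $|z|\le h^{-1/6}$, the cubic Taylor remainder is only $O(h)$ in sup norm, i.e.\ $O(1)$ after dividing by $h$. Take $|z|\le h^{-\alpha}$ with $\alpha<1/6$, or $|z|\lesssim\sqrt{|\log h|}$ as in Section~\ref{sec:initial}.

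Beyond this, your route differs from the paper in how compactness and the initial datum are handled. You need a time modulus uniform in $h$ for Arzel\`a--Ascoli, and you base it on shrinking-sphere barriers whose median behaviour is only asserted. Making them supersolutions of the scheme uniformly in $h$ requires controlling the median of radial profiles also within distance $\lesssim\sqrt h$ of the centre; this is doable but is real work.

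The paper needs no time modulus at all. It uses the half-relaxed limits $\overline u,\underline u$ and only has to show $\overline u(\cdot,0)=\underline u(\cdot,0)=u_0$. It does this by touching $u_0$ from above and below at points of $\R^d\setminus S$ by $C^3_b$ functions. The corresponding scheme solutions have time derivatives bounded uniformly in $h$, by the bound at $t=0$ plus Lemma~\ref{lem:mon_time_derivative}. This is where the hypothesis $u_0\in C^3_b(\R^d\setminus S)$ is used.

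If completed, your barrier argument would be slightly more general, since only the Lipschitz bound enters. By the semigroup property and $L^\infty$ contraction, it suffices to prove it at $t=0$. Similarly, for uniform convergence on all of $\R^d$ the paper uses half-space barriers rather than large spheres. These are exact stationary solutions of the scheme and show that $u_h$ is exactly constant outside $B_L(0)$ (Lemma~\ref{lem:const_outside_ball}).
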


Our main result states that this convergence is strong in the sense that the energies converge along the sequence. 
Here, the approximate mean curvature 
\begin{equation*}
  H_{0,h}(x) := \frac{2}{\sqrt{h}} \int_{\R^d} G_h(x,y) \sign(u_0(x) - u_0(y)) \,dy \approx 2c_0 \nabla \cdot \Big(\frac{\nabla u_0(x)}{|\nabla u_0(x)|}\Big)
\end{equation*}
associated to our scheme will be a central object.

\begin{theorem}[Strong Convergence]\label{cor:energy_conv}
Let $u_h$ be the solution of~\eqref{def:semi_discrete_scheme} starting from $u_0$, where $u_0$ satisfies the assumption of Proposition~\ref{the:viscosity} and
\begin{equation}\label{def:well_prepared_init}
     \sup_{h\in(0,1]}E_h(u_0)<\infty,
\quad     \sup_{h \in (0,1]} \int_{\R^d} |H_{0,h}(x)| \, dx < \infty.
\end{equation}
 Then it holds, with $u$ the viscosity solution of~\eqref{eq:level_set_equation}, that
  \begin{equation*}
     \lim_{h\to0}E_h(u_h(\cdot,t)) =  E(u(\cdot,t)) \quad \text{for a.e.\ }t>0.
  \end{equation*}
\end{theorem}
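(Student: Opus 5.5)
The strategy is the standard lower/upper bound argument. The $\liminf$ inequality $E(u(\cdot,t))\le\liminf_{h\to0}E_h(u_h(\cdot,t))$ holds for every $t$, by the $\Gamma$-convergence of $E_h$ to $E$ together with the uniform convergence $u_h\to u$ from Proposition~\ref{the:viscosity}. The work lies in the matching $\limsup$ inequality. For this I would exploit the gradient-flow structure and write the energy in a ``calibrated'' form.

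Set
\[
\Hher_h(x,t):=\frac{2}{\sqrt h}\int G_h(x,y)\sign(u_h(x,t)-u_h(y,t))\,dy .
\]
By symmetry of $G_h$ (swapping $x$ and $y$), the energy can be written as
\[
E_h(u_h)=\int\!\!\int G_h(x,y)\sign(u_h(x)-u_h(y))\frac{u_h(x)-u_h(y)}{\sqrt h}\,dx\,dy=\int \Hher_h(x,t)\,u_h(x,t)\,dx .
\]
Thus $E_h(u_h(t))$ is the pairing of the non-local mean curvature $\Hher_h$ with $u_h$. The same identity in the limit reads $E(u)=2c_0\int|\nabla u|=\int 2c_0\,\nabla\cdot(\nabla u/|\nabla u|)\,u\,dx$ (formally, by integration by parts), where the relevant object is the vector field $\nu=\nabla u/|\nabla u|$ of unit normals.

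The key estimate is the uniform $L^1$ bound
\[
\sup_{h}\sup_{t\ge0}\int|\Hher_h(x,t)|\,dx\le \sup_h\int|\Hher_{0,h}|\,dx<\infty .
\]
I would prove it by showing that $t\mapsto\int|\Hher_h(\cdot,t)|\,dx$ is non-increasing. Formally, differentiating $\sign(u_h(x)-u_h(y))$ in time produces a Dirac mass on $\{u_h(x)=u_h(y)\}$ multiplied by $\partial_tu_h(x)-\partial_tu_h(y)$. Since $u\mapsto\med_h u$ is monotone and commutes with constants, comparison for the ODE~\eqref{def:semi_discrete_scheme} shows that the ordering of $u_h(x)$ and $u_h(y)$ is preserved in an ``$L^1$-contractive'' way. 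This gives $\frac{d}{dt}\int|\Hher_h|\le0$, in the spirit of Evans--Spruck's estimate for $\int|\partial_tu|$.

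To make this rigorous, I would regularize $\sign$ by a smooth odd increasing $\sigma_\eps$ and test against $\sign(\Hher_h)$. The useful observation is that $\partial_t u_h=(\med_hu_h-u_h)/h$ has sign opposite to that of $\Hher_h$: the median is characterized by the vanishing of $\int G_h\sign(c-u_h)$, and $c\mapsto\int G_h(x,y)\sign(c-u_h(y))\,dy$ is monotone in $c$. I expect this monotonicity step to be the main obstacle, since it requires genuine control of the level sets of $u_h$ (non-degeneracy, regularity of the classical solution) to justify the differentiation through $\sign$.

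With the bound in hand, I would argue by compensated compactness. The measures $\Hher_h(\cdot,t)\,dx$ are bounded in total variation, and by the assumptions on $u_0$ together with the contraction properties they are supported in a fixed bounded set. Hence along subsequences they converge weakly-$\ast$ to a finite measure $\mu_t$. Since $u_h(\cdot,t)\to u(\cdot,t)$ uniformly on compacts and the supports are uniformly bounded, we get $\int \Hher_h u_h\,dx\to\int u\,d\mu_t$.

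It then remains to show $\int u\,d\mu_t\le E(u(\cdot,t))$. For this I would write $\Hher_h=-\nabla\cdot\xi_h$ in a non-local sense, with the vector field $\xi_h(x)=\frac{1}{\sqrt h}\int G_h(x,y)\frac{y-x}{\sqrt h}\ldots$, and show $|\xi_h|\le 2c_0+o(1)$ after integration against test functions. In the limit this gives $\mu_t=-\nabla\cdot\xi$ with $|\xi|\le2c_0$, so that
\[
\int u\,d\mu_t=\int\xi\cdot\nabla u\le 2c_0\int|\nabla u|=E(u).
\]
This last step, which amounts to passing to the limit in the non-local divergence paired with a merely Lipschitz $u$, is the second delicate point. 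Uniform Lipschitz bounds on $u_h$, which follow from translation invariance and comparison, should handle it. Combining it with the $\liminf$ inequality yields energy convergence, for every $t$ and hence in particular for a.e.\ $t$; the uniform-in-time bound is not even needed pointwise, only an $L^\infty_t$ bound.
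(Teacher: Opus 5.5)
You have the paper's architecture: the energy as the pairing $\int \Hher_h u_h\,dx$, monotonicity of the $L^1$ norm of the non-local curvature from the opposite signs of $\Hher_h$ and $\med_h u_h-u_h$, compensated compactness giving $\mu=\nabla\cdot\sigma$ with $|\sigma|\le 2c_0$, and the $\liminf$ by lower semicontinuity. But the step you leave as ``the main obstacle'' is the heart of the proof, and the route you sketch for it does not work.

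\textbf{Smoothing only inside $\Hher_h$ loses the sign property.} If you smooth $\sign$ inside $\Hher_h$ but keep the true median, then from $m:=\med_h u_h(x)>u_h(x)$ you only get
\[
\int G_h(x,y)\sigma_\eps(u_h(x)-u_h(y))\,dy\le\int G_h(x,y)\sigma_\eps(m-u_h(y))\,dy .
\]
The right-hand side need not be $\le 0$, because the cancellation $\int G_h\sign(m-u_h)\,dy=0$ that defines $m$ has no analogue for $\sigma_\eps$. For instance, put roughly half of the $G_h(x,\cdot)$-mass on values far below $m$ and half on values in $(m,m+\eps/C)$, with $u_h(x)$ slightly below $m$. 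Then the smoothed curvature at $x$ is positive while $\partial_t u_h(x)>0$.

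\textbf{The unsmoothed $\Hher_h$ cannot be differentiated either.} No level-set regularity of $u_h$ is available: $u_h$ is merely Lipschitz and has plateaus (e.g.\ $u_h\equiv c$ outside $B_L(0)$), so $\{u_h(x)=u_h(y)\}$ has positive measure.

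\textbf{The missing idea: regularize the dynamics consistently.} The paper solves the scheme $u_{h,\eps,R}$, whose median is defined with the same $\Psi_\eps$ whose derivative $\psi_\eps$ enters $H_{h,\eps,R}$, and uses a cut-off kernel $G_{h,R}$. Then:
\begin{itemize}
\item the Euler--Lagrange condition becomes an equality;
\item $H_{h,\eps,R}(x)\,(\med_{h,\eps,R}u_{h,\eps,R}-u_{h,\eps,R})(x)\le 0$ holds exactly;
\item $H_{h,\eps,R}(x)=0$ if and only if $\med_{h,\eps,R}u_{h,\eps,R}(x)=u_{h,\eps,R}(x)$.
\end{itemize}
After replacing $|\cdot|$ by a $C^1$ approximation $\eta_\delta$ and symmetrizing, this last equivalence is what kills the contribution of $\{|H_{h,\eps,R}|<\delta\}$. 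That term carries a prefactor $h^{-3/2}$, so it must vanish exactly.

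\textbf{The bound does not transfer to $\Hher_h(\cdot,t)$.} The argument above gives the $L^1$ bound only for $H_{h,\eps,R}$. As $\eps\downarrow0$, $\psi_\eps(u_{h,\eps,R}(x)-u_{h,\eps,R}(y))$ tends to $\sign(u_h(x)-u_h(y))$ only where $u_h(x)\neq u_h(y)$; on plateau pairs the limit is uncontrolled. So your fixed-time bound on $\int|\Hher_h(\cdot,t)|\,dx$ is unjustified. Run the compactness on a diagonal sequence instead. By Fatou,
\[
E_h(u_h)\le\liminf_{R\uparrow\infty}\liminf_{\eps\downarrow0}\int H_{h,\eps,R}\,u_{h,\eps,R}\,dx .
\]
Then choose $(\eps_h,R_h)$ so that:
\begin{itemize}
\item this inequality holds up to $\delta$;
\item $u_{h,\eps_h,R_h}\to u$ uniformly, using the comparison estimates $u_{h,\eps,R}\to u_{h,R}\to u_h$;
\item $\int|H_{h,\eps_h,R_h}(\cdot,0)|\,dx$ is controlled by $\int|H_{0,h}|\,dx$. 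This is exact in the limit since $u_{h,\eps,R}(\cdot,0)=u_0$.
\end{itemize}

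\textbf{Smaller points.}
\begin{itemize}
\item The curvatures are not supported in a fixed bounded set: $\Hher_h$ has Gaussian tails, and $H_{h,\eps,R}$ has only exponential decay with non-uniform constants. Since $\int H_{h,\eps,R}\,dx=0$, you can pair with $u-c$, which is compactly supported.
\item Your heuristic that comparison preserves the ordering of $u_h(x,t)$ and $u_h(y,t)$ is not the mechanism; values of one solution at two points can cross. The opposite-sign property is the mechanism.
\end{itemize}
With these repairs your fixed-time compactness goes through and indeed yields convergence for every $t>0$. The paper runs the same argument in space--time.
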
 

Of course, by the dominated convergence theorem (since $E_h(u_h(\cdot,t)) \leq E_h(u_0)$), this implies
\[
    \lim_{h\to0} \int_0^T E_h(u_h(\cdot,t)) \, dt = \int_0^T  E(u(\cdot,t)) \, dt 
\]
for every $T<\infty$, which is the usual form of the energy convergence assumption, see, e.g., \cite{LauxOttoBV}.

The rest of the paper is organized as follows.
In the next section, we present the heuristic idea of our proof.
In Section~\ref{sec:existence}, we prove well-posedness (existence, uniqueness, and a comparison principle) of the scheme and related versions.
Then, in Section~\ref{sec:convergence}, we prove the convergence to the viscosity solution stated in Proposition~\ref{the:viscosity}.
Section~\ref{sec:conv_energie} is devoted to the proof of our main result, the convergence of the energies.
Some useful properties of median filters are collected in Section~\ref{sec:properties}.
Finally, in Section~\ref{sec:initial}, we construct well-prepared initial conditions.

\section{Idea of Proof}
In this section, we motivate why our main result, the convergence of energies, is true before introducing all the technicalities for the rigorous proof later.

Let us first review the beautiful philosophy of Evans and Spruck~\cite{evans_spruck_IV} on converge of the TV-norm for approximation schemes. So, let $v_h$ be an approximation scheme; it is often not difficult to prove that $v_h$ converges uniformly to some $v$. By standard lower-semicontinuity arguments it then holds
\begin{equation*}
    \liminf_{\eps \downarrow 0} \int_{\R^d} |\nabla v_h| \, dx \geq \int_{\R^d} |\nabla v|\,dx.
\end{equation*}
Thus, one concentrates on proving the upper bound to obtain convergence in the TV-norm.
The key idea is a compensated compactness argument: Simply apply an integration by parts to obtain
\begin{equation}\label{eq:es_motivation}
  \int_{\R^d} |\nabla v_h| \,dx = \left\langle \nabla v_h, \frac{\nabla v_h}{|\nabla v_h|} \right\rangle = -\left\langle v_h, \nabla \cdot \left(  \frac{\nabla v_h}{|\nabla v_h|}\right) \right\rangle.
\end{equation}
We recognize that due to the uniform convergence of $v_h$, one needs the dual, that is, weak-$*$ convergence as measures of the approximate mean curvature $H_h := \nabla \cdot \left(\frac{\nabla v_h}{|\nabla v_h|}\right)$, to pass to the limit in the right-hand side of~\eqref{eq:es_motivation}. To obtain the right compactness, one uses a strange and subtle $L^1$ estimate
\begin{equation*}
    \sup_{t \in [0, \infty)} \sup_{h \in (0,1]} \int_{\R^d} |H_h(x,t)|\,dx < \infty.
\end{equation*}
 The estimate is strange in the sense that it lacks the $|\nabla v|$ term from the metric which would allow decomposing to level-sets by the co-area formula. After establishing the compactness, one can quite quickly establish the right limit. 
But, so far the necessary $L^1$ bound could only be established by Evans and Spruck~\cite{evans_spruck_IV} for their very specific scheme. It is thus rather surprising that we can as well establish such an $L^1$ bound for our approximate mean curvature originating from our application-driven median filter scheme.

\medskip

In order to follow the Evans--Spruck philosophy, we briefly introduce non-local calculus:
  The non-local gradient of a function $f:\R^d \to \R$ is defined by
  \begin{equation*}
    \nabla_h f(x,y) := \frac{f(x) - f(y)}{\sqrt{h}}.
  \end{equation*}
  For a non-local vector field $\zeta: \R^d \times \R^d \to \R$ we define the non-local divergence as the adjoint of the non-local gradient with respect to the (by $G_h$ weighted) $L^2$-scalar product, i.e, 
  \begin{equation*}
    \int_{\R^d} \int_{\R^d} G_h(x,y) \zeta(x,y) \nabla_h u(x,y) \,dx \,dy =: \langle \zeta, \nabla_h u\rangle_h \stackrel{!}{=} \langle \div_h \zeta, u \rangle := \int_{\R^d} \div_h \zeta(x) u(x) \,dx.
  \end{equation*}
  One checks that this definition implies for any non-local vectorfield $\zeta$ that
  \begin{equation*}
      \div_h \zeta (x) := \frac{1}{\sqrt{h}}\int_{\R^d} G_h(x,y) (\zeta(x,y)-\zeta(y,x)) \, dy.
  \end{equation*}

The integration by parts mirroring~\eqref{eq:es_motivation} for our non-local TV-norm reads 
\begin{align}\label{eq:discrete_IBP_example}
  E_h(u_h) = \int_{\R^d} \int_{\R^d} G_h(x,y) |\nabla_h u_h(x,y)|\, dx \, dy = \langle \nabla_h u_h, \sign( \nabla_h u_h) \rangle_h = \langle u_h, \div_h \big(\sign( \nabla_h u_h) \big)\rangle.
\end{align}
Thus, in order to prove convergence it will be sufficient to prove that the approximate mean curvature 
\begin{equation*}
  H_h(x) :=  \div_h \big(\sign(\nabla_h u_h) \big)(x) = \frac{2}{\sqrt{h}} \int_{\R^d} G_h(x,y) \sign(u_h(x) - u_h(y)) \,dy 
\end{equation*}
converges weakly-$*$ as a measure to some well-behaved quantity. Indeed, by Proposition~\ref{the:viscosity}, $u_h$ converges uniformly to the unique viscosity solution; so weak-$*$ convergence of $H_h$ as dual to uniform convergence is sufficient to pass to the limit in the product. Therefore, the main challenge is exactly to show the crucial $L^1$ bound
\begin{equation}\label{eq:formal_goal}
  \sup_{h \in (0,1]} \sup_{t \in [0,T]} \int_{\R^d} |H_h| \,dx <\infty, 
\end{equation}
which implies the necessary compactness via Banach-Alao\u{g}lu. Assume for a moment that~\eqref{eq:formal_goal} is true, then, up to subsequences, $H_h \xrightharpoonup{*} \mu$ weakly-$*$ as measure for some measure $\mu$ that can be shown to be of the form $\mu = \nabla \cdot \sigma dx \, dt$ with some vector field $\sigma$ satisfying $|\sigma|\leq 2 c_0 = \frac{2}{\sqrt{\pi}}$. This yields together with the uniform convergence of $u_h$ that
\begin{align*}
  \limsup_{h \downarrow 0} \int_0^T E_h(u_h) \,dt &= \limsup_{h \downarrow 0} \int_0^T \langle u_h, \div_h \big(\sign(\sqrt{h}\nabla_h u_h) \big)\rangle \,dt\\
  &= \int_0^T \int_{\R^d} u(x,t) \nabla \cdot \sigma(x,t) \, dx \,dt \\
  &= - \int_0^T \int_{\R^d} \nabla u(x,t) \cdot \sigma(x,t) \, dx \,dt \\
  &\leq 2c_0 \int_0^T \int_{\R^d} |\nabla u(x,t)|\,dx \,dt. 
\end{align*}
This is precisely the important upper bound for the energy convergence; the lower bound is a standard lower-semicontinuity argument.

So, what is the miraculous mechanism behind~\eqref{eq:formal_goal}? As a motivation we make the following formal calculation:
\begin{align*}
  \frac{d}{dt} \int_{\R^d} |H_h(x,t)|\,dt &= \int \sign(H_h(x,t)) \partial_t H_h(x,t) \,dx \\
  &= \int_{\R^d} \sign(H_h(x,t)) \div_h \big( 2\delta_{\nabla_h u_h = 0} \nabla_h \partial_t u_h \big) \,dx \\
  &= \int_{\R^d} \sqrt{h}\, \div_h \big( 2\nabla_h \sign(H_h(\cdot,t)) \ \delta_{\nabla_h u_h = 0} \big) \partial_t u_h \,dx.
\end{align*}
We now claim that 
\begin{equation}\label{eq:claim_different_sign}
\div_h \big( \nabla_h \sign(H_h(\cdot,t)) \ \delta_{\nabla_h u_h = 0} \big)(x)\partial_t u_h(x) \leq 0
\end{equation}
 for any $x \in \R^d$. This claim implies
 \begin{equation*}
  \frac{d}{dt} \int_{\R^d} |H_h(x,t)|\,dt \leq 0
 \end{equation*}
 which itself yields~\eqref{eq:formal_goal} for well-prepared initial conditions. 

To see that the claim~\eqref{eq:claim_different_sign} is true, fix $x \in \R^d$. If $ \partial_t u_h(x) = 0$ we are immediately done. So, assume (the other case is similar) that 
\begin{equation*}
  \frac{\med_h u_h(x) - u_h(x)}{h} = \partial_t u_h(x) > 0.
\end{equation*}
It has to hold $\sign(H_h(x)) \leq 0$ as otherwise one would have the contradiction (using the monotonicity of the $\sign$-function with $\med_h u_h(x) > u_h(x)$)
\begin{align}
  0 < H_h(x) &= \frac{2}{\sqrt{h}} \int_{\R^d} G_h(x,y) \sign(u_h(x) - u_h(y)) \,dy \nonumber\\
  &\leq \frac{2}{\sqrt{h}} \int_{\R^d} G_h(x,y) \sign(\med_h u_h(x) - u_h(y)) \,dy =0, \label{eq:sign_trick}
\end{align} 
where we used in the last equation the Euler-Lagrange equation associated to the definition of the median and assuming that the level-set of $\med_h u_h(x)$ has $0$ measure.
As $\sign \in \{-1,0,1\}$ it follows first $\sign(H_h(x)) = -1$ and thus also $\nabla_h \sign(H_h)(x,y) \leq 0$. As $G_h(x,y) \geq 0$ and $\delta_{\nabla_h u_h = 0} \geq 0$ we conclude
\begin{equation*}
  \div_h \big( \nabla_h \sign(H_h) \ \delta_{\nabla_h u_h = 0} \big)(x) = \frac{2}{\sqrt{h}}\int_{\R^d} G_h(x,y) \nabla_h \sign(H_h) (x,y) \delta_{\nabla_h u_h(x,y) = 0} \,dy \leq 0,
\end{equation*}
which is precisely the claim~\eqref{eq:claim_different_sign}. We will later make this formal argument rigorous by several regularization procedures.

\section{Existence and Uniqueness of the Scheme}
\label{sec:existence}
In this section we lay the foundation of the paper; that is, we show there exists a unique solution to the scheme~\eqref{def:semi_discrete_scheme}. To that aim we prove a comparison principle that will be useful throughout this paper. At the end we will also prove that the $L^\infty$ norm of the time derivative of the scheme is non-increasing which will be later crucial to verify that the initial conditions are attained in the limit $h \downarrow 0$.
As already mentioned in the previous section, we will later work with more regularized versions of our median filter scheme~\eqref{def:app_scheme}. Thus, we introduce the following general framework:
 Let $K$ be an arbitrary kernel satisfying
  \begin{gather}
    K: \R^d \to \R_{\geq 0}, \quad \quad \int_{\R^d} K(z) \,d z < \infty, \nonumber  \\
    \text{ and either } \label{def:K_kernel}\\     
    \supp K \text{ is bounded} \quad \quad \text{ or } \quad \inf_{z \in B_R(0)} K(z) > 0 \text{ for any } R > 0,\nonumber
  \end{gather}
  as well as $\Psi: \R \rightarrow \R$ convex, differentiable except at the point $0$ and
  \begin{equation*}
    \Psi' =: \psi \text{ in } \R \setminus \{0\}, \quad \quad \psi(s) = - \psi(-s), \quad \quad \psi \neq 0 \text{ on } \R\setminus\{0\}.
  \end{equation*}
  Then we define the generalized median operator 
  \begin{equation}\label{def:generalized_median}
      \med_{K, \Psi} f(x) \in \argmin_{c \in \R} \int_{\R^d} K(x-y) \Psi\big(c - f(y) \big)\, dy
  \end{equation}
and the generalized median filter scheme by
\begin{equation}\label{def:generlized_scheme}
  \begin{cases}
    \partial_t w = \frac{\med_{K,\Psi} w - w}{h} & \text{ in } \R^d \times (0, \infty),\\
    w(\cdot, 0) = w_0 & \text{on } \R^d \times \{t = 0\}.
  \end{cases}
\end{equation}

We start by proving a comparison principle for this generalized scheme which immediately implies the uniqueness of solutions to the scheme~\eqref{def:generlized_scheme}.
\begin{lemma}[General Comparison Principle]\label{lem:comp_princ}
    Assume that $\underline{w}$ and $\overline{w}$ are a sub- and super-solution to~\eqref{def:generalized_median}, in the sense that
    \begin{equation*}
      \partial_t \underline{w} \leq \frac{\med_{K, \Psi} \underline{w} - \underline{w}}{h} \quad \quad \text{ and }\quad \quad\partial_t \overline{w} \geq \frac{\med_{K, \Psi} \overline{w} - \overline{w}}{h},
    \end{equation*}
    respectively, with bounded continuous initial data satisfying  $\underline{w}(\cdot, 0) \leq \overline{w}(\cdot, 0)$, and such that $\lim_{|x|\to \infty} \underline{w}(x,t) \leq \lim_{|x|\to \infty} \overline{w}(x,t)$ exist for all $t \geq 0$. Then it holds for all $t \geq 0$ that %\textcolor{red}{or is it enough to assume $\limsup_{|x|\to \infty} \underline{w}(x)-\overline{w}(x) \leq 0?$}
    \begin{equation*}
        \underline{w}(\cdot, t) \leq \overline{w}(\cdot, t).
    \end{equation*}
\end{lemma}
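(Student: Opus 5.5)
The plan is to run a maximum-principle argument on the difference. The structural input is that $\med_{K,\Psi}$ is monotone and commutes with constants. Concretely, if $f\le g$ then $\med_{K,\Psi} f\le \med_{K,\Psi} g$ (for a suitable choice of selection when the argmin is not unique), and $\med_{K,\Psi}(f+c)=\med_{K,\Psi} f+c$.

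Monotonicity will be shown from the first-order condition. The map $c\mapsto \int K(x-y)\Psi(c-f(y))\,dy$ is convex, with one-sided derivatives given by integrals of $\psi(c-f(y))$, where $\psi$ is odd and nonzero off $0$, hence of the sign of its argument. Since $\Psi$ is convex, $\psi$ is non-decreasing. Now take $f\le g$ pointwise and let $c$ be a minimizer for $g$. Then the right derivative of the $f$-functional at $c$ is at least that of the $g$-functional, which is $\ge 0$. Hence the set of minimizers for $f$ lies to the left of that for $g$, so the largest (or smallest) minimizers are ordered. Translation invariance is immediate from the definition.

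Next set $m(t):=\sup_x(\underline w-\overline w)(x,t)$ and aim to show $m(t)\le 0$. Suppose instead that $m(t_0)>0$ for some $t_0$. To make this rigorous I would first perturb: replace $\underline w$ by $\underline w_\eps:=\underline w-\eps(1+t)$. By translation invariance this is a strict subsolution, with $\partial_t\underline w_\eps\le (\med\underline w_\eps-\underline w_\eps)/h-\eps$. Let $t^*$ be the first time at which $\sup_x(\underline w_\eps-\overline w)$ reaches $0$. This sup is positive somewhere but initially $\le -\eps$, so $t^*>0$.

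The hypothesis on the limits at infinity is what makes the sup attained. At $t^*$ we have $\lim_{|x|\to\infty}(\underline w_\eps-\overline w)\le-\eps(1+t^*)<0$, so the sup is attained at some $x^*$. At $(x^*,t^*)$ we get $\underline w_\eps\le\overline w$ everywhere with equality at $x^*$. Monotonicity then gives $\med\underline w_\eps(x^*)\le\med\overline w(x^*)$, and therefore
\[\partial_t(\underline w_\eps-\overline w)(x^*,t^*)\le \frac{\med\underline w_\eps(x^*)-\med\overline w(x^*)}{h}-\eps<0.\]
On the other hand, $t^*$ is the first touching time, so the difference at $x^*$ rises from negative values to $0$. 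This forces $\partial_t(\underline w_\eps-\overline w)(x^*,t^*)\ge0$ (as a left derivative), a contradiction. Letting $\eps\downarrow0$ then gives the claim.

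The main obstacle I expect is the regularity needed at the touching point. We need continuity in $t$ of $\sup_x(\underline w_\eps-\overline w)$ so that a first touching time exists, and we need differentiability in $t$ at $(x^*,t^*)$. I would assume $\underline w,\overline w$ are continuous and $C^1$ in time, as classical sub/supersolutions. If uniformity is lacking, I would alternatively argue via the Lipschitz-in-time bound coming from boundedness of $(\med w-w)/h$.

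A second subtlety is non-uniqueness of the median. I would handle it by interpreting the inequalities with the upper selection for subsolutions and the lower selection for supersolutions, or by noting that under the kernel assumption \eqref{def:K_kernel} the monotonicity argument orders every pair of selections at a strict touching.
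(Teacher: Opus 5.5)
Your argument is correct and is essentially the paper's proof: the paper also shifts by $\delta(t+1)$ (adding it to $\overline{w}$ rather than subtracting it from $\underline{w}$) to get a strict super-solution via $\med_{K,\Psi}(f+c)=\med_{K,\Psi}f+c$. It then takes the first touching time and reaches the contradiction from the monotonicity of the median in Lemma~\ref{lem:uniqueness_median}.

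Your fallback for non-unique medians is unnecessary, because that lemma shows the generalized median is unique. Note also that pairing the upper selection for sub-solutions with the lower selection for super-solutions would break the key inequality $\med\underline{w}_\eps(x^*)\le\med\overline{w}(x^*)$: take $\underline{w}_\eps(\cdot,t^*)=\overline{w}(\cdot,t^*)$ with a non-unique median at $x^*$.
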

\begin{proof}
    \emph{Step 1 -- Strict Inequalities:} First, we prove the claim if it holds
    \begin{equation*} 
      \underline{w}(\cdot, 0) < \overline{w}(\cdot, 0) \quad \quad \text{ and } \quad\quad \lim_{|x|\to \infty} \underline{w}(x,t) < \lim_{|x|\to \infty} \overline{w}(x,t)
    \end{equation*}
     for all $t \geq 0$, and if $\overline{w}$ is a strict super-solution, i.e, 
    \begin{equation*}
      \partial_t \overline{w} > \frac{\med_{K, \Psi} \overline{w} - \overline{w}}{h}.
    \end{equation*}

    We argue by contradiction: Assume that $\underline{w} \geq \overline{w}$ at some point.  This, together with the continuity of $\underline{w}$ and $\overline{w}$ on $\R^d \times (0, \infty)$ implies that we can pick the smallest $t_0 > 0$ such that there exists an $x_0 \in \R^d$ with $\underline{w}(x_0,t_0) = \overline{w}(x_0,t_0)$. By construction, it also holds $\underline{w}(x,t_0) \leq \overline{w}(x,t_0)$ for all $x \in \R^d$. As $\underline{w}(x_0, t_0 - \eps) - \overline{w}(x_0, t_0 -\eps ) \leq 0 = \underline{w}(x_0, t_0 ) - \overline{w}(x_0, t_0 )$ for small $\eps$ it holds 
    \begin{align*}
        0 \leq \partial_t (\underline{w}(x_0, t_0) - \overline{w}(x_0, t_0)) h &< \med_{K, \Psi} \underline{w}(x_0,t_0) - \med_{K, \Psi} \overline{w}(x_0,t_0) + \overline{w}(x_0, t_0) - \underline{w}(x_0, t_0) \\ 
        &= \med_{K, \Psi} \underline{w}(x_0,t_0) - \med_{K, \Psi}\overline{w}(x_0,t_0).
    \end{align*}
    But this is a contradiction, because the monotonicity of the median operator by Lemma~\ref{lem:uniqueness_median}, together with $\underline{w}(x,t_0) \leq \overline{w}(x,t_0)$ for all $x \in \R^d$, implies 
    \begin{equation*}
        \med_{K, \Psi} \underline{w}(x_0,t_0) - \med_{K, \Psi} \overline{w}(x_0,t_0) \leq 0.
    \end{equation*}
    
    \emph{Step 2 -- Conclusion:} 
    Define $\tilde{w}(\cdot, t) := \overline{w}(\cdot, t) + \delta (t +1)$ for any $t \geq 0$ and some $\delta > 0$.
    Then $\tilde{w}$ is a strict super-solution as
    \begin{align*}
      \partial_t \tilde{w}(\cdot, t) = \partial_t \overline{w} + \delta &> \frac{\med_{K, \Psi} \overline{w}(\cdot, t) - \overline{w}(\cdot, t)}{h} \\ &= \frac{\med_{K, \Psi} (\overline{w}(\cdot, t) + \delta (t + 1)) - (\overline{w}(\cdot, t) + \delta (t +1))}{h} = \frac{\med_{K, \Psi} \tilde{w}(\cdot, t) - \tilde{w}(\cdot,t)}{h},
    \end{align*}
        where we used that for any $c \in \R$ and any continuous function $f$ it holds $\med_{K,\psi}(f + c) = \med_{K, \Psi}(f) + c$.
    Additionally, one has $\underline{w}(\cdot, 0) < \tilde{w}(\cdot, 0)$ and $\lim_{|x|\to \infty} \underline{w}(x,t) < \lim_{|x|\to \infty} \tilde{w}(x,t)$. This brings us in the position to apply \emph{Step 1} to the pair $\underline{w}, \tilde{w}$ yielding
    \begin{equation*}
      \underline{w}(x, t) \leq \tilde{w}(x, t) = \overline{w}(x, t) + \delta t \quad \quad \text{ for any } (x,t) \in \R^d \times [0, \infty).
    \end{equation*}
    As $\delta \geq 0$ was arbitrary, we can pass to the limit $\delta \to 0$ which finishes this proof.
\end{proof}
\begin{proposition}[Existence]
    For any initial value $w_0\in C^{0,1}(\R^d)$ that is constant outside a bounded set there exists a classic solution to~\eqref{def:generlized_scheme}.
\end{proposition}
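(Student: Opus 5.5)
We prove existence by a Picard iteration (Banach fixed point) in the space $X_T := C([0,T]; C_b(\R^d))$ with the sup-norm, applied to the integral form
\begin{equation*}
  w(\cdot,t) = w_0 + \frac1h \int_0^t \big(\med_{K,\Psi} w(\cdot,s) - w(\cdot,s)\big)\,ds .
\end{equation*}
The key structural input is that $\med_{K,\Psi}$ is monotone (Lemma~\ref{lem:uniqueness_median}) and commutes with constants, $\med_{K,\Psi}(f+c)=\med_{K,\Psi}f+c$. Together these give the non-expansiveness
\begin{equation*}
  \|\med_{K,\Psi} f - \med_{K,\Psi} g\|_{\infty} \leq \|f-g\|_\infty ,
\end{equation*}
since $g-\|f-g\|_\infty \le f \le g+\|f-g\|_\infty$. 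Hence the right-hand side $F(w):=(\med_{K,\Psi}w-w)/h$ is Lipschitz on bounded functions with constant $2/h$. The ODE in the Banach space of bounded functions therefore has a unique global solution by Picard--Lindelöf: the Lipschitz constant does not depend on the size of $w$, so there is no blow-up and the local solutions extend to $[0,\infty)$.

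The substantive part is showing that $F$ maps the relevant class into itself, so that the solution is a classical one consisting of continuous functions. I plan to prove three properties, all for $w_0 \in C^{0,1}$ and constant ($=c_\infty$) outside a bounded set.

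First, well-definedness. The argmin in~\eqref{def:generalized_median} is a nonempty compact interval, because $\Psi$ is convex and coercive: $\psi$ is odd, nonzero and monotone, so $\Psi$ grows at least linearly. Whenever it is not a singleton I fix a selection, say the midpoint. Under the positivity alternative of~\eqref{def:K_kernel}, which applies to the Gaussian, the minimizer is unique anyway, because strict monotonicity of $c\mapsto\int K\psi(c-f)$ holds on the range of a continuous $f$.

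Second, regularity. Translation invariance of the kernel gives $\med_{K,\Psi}f(\cdot+z)=\med_{K,\Psi}(f(\cdot+z))$, so the non-expansiveness yields $\Lip(\med f)\le \Lip(f)$. Consequently the iteration preserves the closed subset of $X_T$ of functions with Lipschitz constant at most $\Lip(w_0)$. Since this subset is closed, the fixed point is Lipschitz in $x$ and continuous in $t$, and $\partial_t w = F(w)$ holds classically.

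Third, behaviour at infinity, which is needed to apply Lemma~\ref{lem:comp_princ}. I will show $w(x,t)\to c_\infty$ as $|x|\to\infty$, or at least $\sup|w|\le\sup|w_0|$, the latter being clear from the comparison with constants. For compactly supported $K$ the median at points far away equals $c_\infty$ exactly. For positive kernels I expect that $\med f(x)\to c_\infty$ still holds, because the mass of $K$ seen from $x$ concentrates on $\{f=c_\infty\}$.

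The main obstacle is continuity of $\med_{K,\Psi}$ (and preservation of the limit at infinity) when $K$ has bounded support and the argmin may be a non-degenerate interval. In that case a selection can jump. I would first try to argue that the Lipschitz bound above applies to the maximal and minimal selections individually. If that fails, I would restrict to the uniqueness case or use a vanishing strictly convex perturbation $\Psi+\eps|\cdot|^2$, obtaining the solution as a uniform limit via the $\eps$-uniform Lipschitz bounds and Arzelà--Ascoli.
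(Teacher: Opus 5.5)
Your route, a Banach fixed point in the sup norm driven by the non-expansiveness $\|\med_{K,\Psi}f-\med_{K,\Psi}g\|_\infty\le\|f-g\|_\infty$, is viable and genuinely different from the paper's. However, the step that is supposed to make it close is false as written.

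For the integral form you chose, the Picard map $\Phi(w)(t)=w_0+\frac1h\int_0^t(\med_{K,\Psi}w(s)-w(s))\,ds$ does not preserve the set $\{\Lip(w(\cdot,t))\le\Lip(w_0)\text{ for all }t\}$. The integrand only has Lipschitz constant $\le 2\Lip(w_0)$, and nothing compensates the term $-\frac1h\int_0^t w\,ds$, so you only get $\Lip(\Phi(w)(t))\le(1+\frac{2t}{h})\Lip(w_0)$. Concretely, take $\Psi=|\cdot|$, $K=G_h$, and a spike $w_0$ so thin that $\med_{K,\Psi}w_0\equiv0$. Then already the first iterate (from the constant path) is $(1-\frac th)w_0$, whose Lipschitz constant exceeds $\Lip(w_0)$ for $t>2h$.

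Two things in your argument rest on this invariance: the continuity of $\med_{K,\Psi}w(s)$, i.e.\ that $\Phi$ maps into $X_T$, and your conclusion that the fixed point is $\Lip(w_0)$-Lipschitz. So the argument does not close as stated. The repair is cheap: use the integrating factor,
\[
\Phi(w)(t)=e^{-t/h}w_0+\frac1h\int_0^t e^{-(t-s)/h}\,\med_{K,\Psi}w(s)\,ds .
\]
This is an average, with total weight $1$, of $w_0$ and the functions $\med_{K,\Psi}w(s)$. Consequently:
\begin{itemize}
\item it preserves $\|\cdot\|_\infty\le\|w_0\|_\infty$;
\item by your bound $\Lip(\med_{K,\Psi}f)\le\Lip(f)$, it also preserves $\Lip\le\Lip(w_0)$;
\item it satisfies $\sup_{[0,T]}\|\Phi(w)-\Phi(v)\|_\infty\le(1-e^{-T/h})\sup_{[0,T]}\|w-v\|_\infty$, so it is a contraction on every $[0,T]$.
\end{itemize}
Its fixed point lies in $C^1([0,\infty);C_b(\R^d))$ and solves~\eqref{def:generlized_scheme} pointwise. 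This is the continuous-time analogue of the paper's restriction $\tau<h$, which makes each Euler step $(1-\frac\tau h)w^n+\frac\tau h\med_{K,\Psi}w^n$ a convex combination. Alternatively, you can run Picard--Lindel\"of in $BUC(\R^d)$, which $\med_{K,\Psi}$ maps into itself by your translation argument. The Lipschitz bound then follows afterwards from $\|w(\cdot+z,t)-w(\cdot,t)\|_\infty\le\|w_0(\cdot+z)-w_0\|_\infty$, i.e.\ from non-expansiveness of the flow plus translation equivariance.

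The obstacle you single out is not one. Lemma~\ref{lem:uniqueness_median}, whose monotonicity you already invoke, also asserts uniqueness of the median for continuous bounded $f$. Even without it, the smallest and largest minimizers are monotone in $f$ and commute with constants, because the one-sided derivatives of $c\mapsto\int K(x-y)\Psi(c-f(y))\,dy$ are non-increasing in $f$. So your non-expansiveness and translation argument apply to them verbatim, which is just your first fallback. Your third step, on behaviour at infinity, is not needed for existence at all: nothing in the fixed-point argument uses Lemma~\ref{lem:comp_princ}, and the sup bound comes directly from the averaged form.

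Compare this with the paper's proof. It discretizes in time and extracts limits of the interpolants and difference quotients by Arzel\`a--Ascoli. It then needs Lemma~\ref{lem:cuttable} to pass to the limit in the nonlocal median, because convergence is only locally uniform. The repaired contraction argument works with global uniform convergence instead. It therefore needs neither compactness nor the cut-off lemma, gives uniqueness and $C^1$-regularity in time simultaneously, and applies to any bounded Lipschitz $w_0$.
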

\begin{proof}
    We want to approximate the equation~\eqref{def:generlized_scheme} by replacing the time derivative with a difference quotient. To that aim define iteratively
    \begin{equation}\label{def:app_scheme}
        w_\tau^{n+1} = \frac{\tau}{h}(\med_{K, \Psi} w_\tau^n -w_\tau^n) + w_\tau^n
    \end{equation}
    with the initial condition $w_\tau^0 = w_0$ as given in~\eqref{def:semi_discrete_scheme}.
    Rearranging immediately yields the implicit Euler formula
    \begin{equation}\label{eq:discrete_equation}
        \frac{w_\tau^{n+1} - w_\tau^n}{\tau} = \frac{\med_{K, \Psi} w_\tau^n - w_\tau^n}{h}.
    \end{equation}
    \emph{Step 1 -- Monotonicity in $L^\infty$: } We first claim that if $\tau < h$, then it holds
    \begin{equation}\label{eq:mon_Linfty_app_scheme}
        \sup_{x \in \R^d} |w_\tau^{n+1}| \leq \sup_{x \in \R^d} |w_\tau^n|,
    \end{equation}
    which implies that $w_\tau^n$ is uniformly bounded.

    Indeed, as it holds $0 \leq 1- \frac{h}{\tau} \leq 1$ and $|\med_{K, \Psi} v(x_0)| \leq \sup_{y \in R^d} |v(y)|$ for any function $v$ by Lemma~\ref{lem:uniqueness_median} (comparing to a constant function) and $x_0 \in \R^d$ we simply compute
    \begin{align*}
        |w_\tau^{n+1}(x_0)| &= |\frac{\tau}{h} \med_{K, \Psi} w_\tau^n(x_0) + (1- \frac{h}{\tau} ) w_\tau^n(x_0)|\\
        & \leq \frac{\tau}{h}| \med_{K, \Psi} w_\tau^n(x_0)| + (1- \frac{h}{\tau} ) |w_\tau^n(x_0)| \\
        & \leq \frac{\tau}{h} \sup_{x \in \R^d} |w_\tau^n(x)| + (1- \frac{h}{\tau} ) \sup_{x \in R^d}|w_\tau^n(x)|\\
        &\leq \sup_{x \in \R^d} |w_\tau^n(x)|.
    \end{align*}
    As $x_0$ was arbitrary, the claim follows.
    
    \emph{Step 2 -- Linear Interpolation:} We introduce the linear interpolation $w_\tau: \R^d \times (0, \infty) \rightarrow \R$, i.e., $w_\tau$ is piecewise affine in time and it holds 
    \begin{equation*}
      w_\tau(x, n \tau) = w_\tau^n(x) \quad \text{ for all } n \in \N. 
    \end{equation*}

    \emph{Step 3 -- Equicontinuity in Space and Time:} We claim that the family of functions $\{w_\tau\}_\tau$ of the linear interpolations is equicontinuous. 

    We start with the equicontinuity in time: Notice the time-slope of the linear interpolation is bounded as it holds by \emph{Step 1} that
    \begin{align*}
      \left| \frac{w_\tau( \cdot, (n+1) \tau) - w_\tau( \cdot, n\tau)}{\tau} \right| = \left| \frac{\med_{K, \Psi} w_\tau^n(\cdot) - w_\tau^n(\cdot)}{h} \right| \leq \frac{2  \|w_0\|_\infty}{h}.
    \end{align*}
    As $h$ is fixed this immediately implies the claimed equicontinuity in time.

    We continue with the equicontinuity in space:
   Later, in Lemma~\ref{lem:contraction} we will prove that the generalized median operator is 1-Lipschitz, i.e.,
    \begin{equation}\label{eq:med_contraction}
      |\med_{K, \Psi} f(x) - \med_{K, \Psi} f(y)| \leq L(f) |x-y| \quad \text{ for all } x,y \in \R^d,
    \end{equation}
    for any continuous, bounded $f$ and the Lipschitz constant $L(f)$.
    
    Equipped with~\eqref{eq:med_contraction} we can now compute that the Lipschitz constant of $w_\tau$ is also non-increasing, i.e.,
    \begin{align*}
      |w_\tau^{n+1}(x) - w_\tau^{n+1}(y)| &\leq \frac{\tau}{h} |\med_{K, \Psi} w_\tau^n (x) - \med_{K, \Psi} w_\tau^n(y)| + (1- \frac{\tau}{h}) |w_\tau^n(x) - w_\tau^n(y)| \\
      &\leq \frac{\tau}{h} L(w_\tau^n)|x-y| + (1- \frac{\tau}{h}) L(w_\tau^n) |x-y|\\
      &= L(w_\tau^n) |x-y|.
    \end{align*}
    By induction, it follows
    \begin{equation} \label{eq:bound_lipschitz}
      L(w_\tau^{n+1}) \leq L(w_\tau^{n}) \leq \dots \leq L(w_0),
    \end{equation}
    which implies the space-equicontinuity of $w_\tau$.

    \emph{Step 4 -- Compactness of $w_\tau$:} The requirements of the Arzel{\`a}-Ascoli theorem are fulfilled by \emph{Step~1} and \emph{Step~3}. Hence, there exists a $w$ such that $w_\tau \rightarrow w$ uniformly on $B_R(0) \times (0, T)$ for any $R, T > 0$. Furthermore, $w$ is Lipschitz in space with 
    \begin{equation}\label{eq:Lip_u_to_u_0}
      \Lip(w) \leq \Lip(w_0).
    \end{equation}

    \emph{Step 5 -- Equicontinuity of Difference Quotient:}
    We show that the difference quotient 
    \begin{equation}\label{def:difference_quotient}
      \frac{w_\tau(x, t + \tau) - w_\tau(x, t)}{\tau} 
    \end{equation}
    is equicontinuous in $x \in \R^d$ and $t \in [0, \infty)$. 

    We start with the time-equicontinuity: By \emph{Step 3} it holds for any $s,t \geq 0$ and $z \in \R^d$ that $|w_\tau(z, t) - w_\tau(z, s)| \leq \frac{2\|w_0\|_\infty}{h} |t-s|$. Again, as the median operator is $1$-Lipschitz by Lemma~\ref{lem:contraction} that implies
    \begin{equation*}
      |\med_{K, \Psi} w_\tau (x, t) - \med_{K, \Psi} w_\tau(x, s)| \leq \sup_{z \in \R^d} |w_\tau(z, t) - w_\tau(z, s)| \leq \frac{2\|w_0\|_\infty}{h} |t-s|.
    \end{equation*} 
    By our definition of the scheme~\eqref{def:app_scheme} it follows now for $t = n \tau$ and $n \in \N$
    \begin{align*}
      &\left|\frac{w_\tau(x, t + \tau) - w_\tau(x, t)}{\tau} - \frac{w_\tau(x, s + \tau) - w_\tau(x, s)}{\tau} \right| \\
      &= \left|\frac{\med_{K, \Psi} w_\tau(x,t) - \med_{K, \Psi} w_\tau(x,s)}{h} + \frac{w_\tau(x, s) - w_\tau(x,t)}{h} \right|\\
      &\leq \frac{4\|w_0\|_\infty}{h^2} |t-s|.
    \end{align*}
    As the difference quotient is linear in $w_\tau$ the result follows by the properties of the linear interpolation.
    For the space-equicontinuity we use again~\eqref{eq:med_contraction} and \eqref{eq:bound_lipschitz} of \emph{Step 3} to compute for $t = n \tau$ and $n \in \N$
    \begin{align*}
      &\left|\frac{w_\tau(x, t + \tau) - w_\tau(x, t)}{\tau} - \frac{w_\tau(y, t + \tau) - w_\tau(y, t)}{\tau} \right| \\
      &= \left|\frac{\med_{K, \Psi} w_\tau(x, t) - \med_{K, \Psi} w_\tau(y, t)}{h} + \frac{w_\tau(y, t) - w_\tau(x, t)}{h} \right|\\
      &\leq \frac{2 L(w_0)}{h}|x-y|.
    \end{align*}
    Again, the result for general $t$ follows by the properties of the linear interpolation, i.e., for any Lipschitz continuous $f^0, f^1$ and the linear interpolation $f$ between them it holds
    \begin{align*}
      |f(x,t) - f(y,t)| &= |(1-t) f^0(x ) + t f^1(x) - (1-t) f^0(y) - t f^1(y)| \\
      &\leq (1-t) |f^0(x) - f^0(y)| + t|f^1(x) - f^1(y)| \\
      &\leq \max\{L(f^0), L (f^1)\} |x - y|.
    \end{align*}

    \emph{Step 6 –- Compactness of the Difference Quotient:} We claim that the difference quotient converges to the time derivative of $w$, the limit of \emph{Step 4}.

    The equicontinuity in time of $w$ form \emph{Step~3} gives the equiboundedness of the difference quotients~\eqref{def:difference_quotient} and \emph{Step~5} gives the equicontinuity. By the Arzel{\`a}-Ascoli theorem (up to subsequences)
    \begin{equation}\label{eq:lim_differnce_quotient}
        \frac{w_\tau(x, t + \tau) - w_\tau(x, t)}{\tau} \rightarrow f(x,t)
    \end{equation}
    uniformly as $\tau \downarrow 0$ for any $x \in B_R(0)$ and $t \in [0,T]$. It is also clear that $f$ is Lipschitz-continuous.
    By the linear interpolation in time it holds
    \begin{equation*}
      \frac{w_\tau(x, t + \tau) - w_\tau(x, t)}{\tau}  = \partial_t w_\tau(x,t) \quad \quad \text{ for } t \in [0, \infty) \setminus \N\tau.
    \end{equation*}
    This lets us conclude, by the fundamental theorem of calculus and the continuity of $w_\tau$ in time, that for any $0 \leq s_1 \leq s_2 \leq T$
    \begin{equation*}
      \int_{s_1}^{s_2} \frac{w_\tau(x, t + \tau) - w_\tau(x, t)}{\tau}\, dt = w_\tau(s_2) - w_\tau(s_1).
    \end{equation*}
    Now we can use~\eqref{eq:lim_differnce_quotient} to pass to the limit $\tau \downarrow 0$ to obtain
    \begin{equation*}
      \frac{1}{s_2-s_1}\int_{s_1}^{s_2} f(t) \, dt = \frac{w(s_2) - w(s_1)}{s_2 - s_1}.
    \end{equation*}
    As $f$ is continuous we can pass $s_1 \rightarrow s_2$ to obtain
    \begin{equation*}
      f(t) = \partial_t w(t)
    \end{equation*}
    for any $t \geq 0$. Hence, $w$ is differentiable in time and it holds
    \begin{equation*}
        \frac{w_\tau(t + \tau, x) - w_\tau(t, x)}{\tau} \rightarrow \partial_t w(t,x).
    \end{equation*}

\emph{Step 7 -- Convergence of the Median:} We claim, for any $x \in \R^d$, $t \geq 0$ it holds
\begin{equation}\label{eq:conv_med_tau}
  \lim_{\tau \downarrow 0} \med_{K, \Psi} u_{\tau}(x,t) = \med_{K, \Psi} w(x,t),
\end{equation}
where $w$ is the limit of $\emph{Step 4}$.

To prove this, fix $x \in \R^d$, $t\geq 0$ and $\eps > 0$. By Lemma~\ref{lem:cuttable} combined with $\max\{\Lip(w_\tau), \Lip(w)\} \leq \Lip(w_0)$ there exists an $R > 0$ such that for the cut-off kernel $K_R(x,y) = \mathbbold{1}_{|x-y|\leq R} K(x,y)$ it holds
\begin{equation*}
  |\med_{K_R, \Psi} w_\tau(x,t) - \med_{K, \Psi} w_\tau(x,t)| \leq \frac{\eps}{3} \quad \quad \text{ and } \quad \quad |\med_{K_R, \Psi} w(x,t) - \med_{K, \Psi} w(x,t)| \leq \frac{\eps}{3}.
\end{equation*}
Furthermore, by the local uniform convergence of \emph{Step 4} there exists a $\tau_0$ such that for $\tau < \tau_0$ it holds 
\begin{equation*}
\|w_\tau(\cdot,t) - w(\cdot, t)\|_{L^\infty(B_R(0))} \leq \frac{\eps}{3}.
\end{equation*}
This yields together with Lemma~\ref{lem:contraction} the bound
\begin{equation*}
   |\med_{K_R, \Psi} w_\tau(x,t) - \med_{K_R, \Psi} w(x,t)| \leq \|w_\tau(\cdot, t) - w(\cdot, t)\|_{L^\infty(\supp K_R(x, \cdot))} \leq \frac{\eps}{3}.
\end{equation*}
To summarize, for $\tau \leq \tau_0$ it holds
\begin{align*}
&\left|\med_{K,\Psi} w_\tau(x,t)-\med_{K,\Psi} w(x,t)\right| \\
&\leq \left|\med_{K,\Psi} w_\tau(x,t)-\med_{K_R,\Psi} w_\tau(x,t)\right|
\,+\, \left|\med_{K_R,\Psi} w_\tau(x,t)-\med_{K_R,\Psi} w(x,t)\right| \\
&\qquad + \left|\med_{K_R,\Psi} w(x,t)-\med_{K,\Psi} w(x,t)\right| \\
&\leq \varepsilon.
\end{align*}
As $\eps$ was, arbitrary this finishes \emph{Step 7}.

    \emph{Step 8 -- Conclusion:} We claim we can pass to the limit in~\eqref{eq:discrete_equation}.

    By the local uniform convergence of $w_\tau$ from \emph{Step 4}, the local uniform convergence of the difference quotient from \emph{Step 6}, and the pointwise convergence of the median from \emph{Step 7}, we can indeed pass to the pointwise limit $\tau \to 0$ in~\eqref{eq:discrete_equation} to obtain
    \begin{equation*}
      \partial_t w(x,t) = \frac{\med_{K, \Psi} w(x,t) - w(x,t) }{h}.
    \end{equation*}
    Thus, $w$ is a classical solution to \eqref{def:generlized_scheme}.
\end{proof}

We finish this section by observing that the $L^\infty$ norm of the time derivative of solutions of~\eqref{def:generlized_scheme} is non-increasing. This will be used to check that the limit indeed attains the initial conditions.
\begin{lemma}\label{lem:mon_time_derivative}
  Let $w$ be a solution of~\eqref{def:generlized_scheme} with initial value $w_0\in C_b^{0,1}(\R^d)$ then it holds for $0 \leq s \leq t < \infty$
  \begin{equation*}
    \|\partial_t w(\cdot,t)\|_{\infty} \leq \|\partial_t w(\cdot, s)\|_{\infty}.
  \end{equation*}
\end{lemma}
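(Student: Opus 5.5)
The plan is to compare the solution with a time-shifted copy of itself, using the comparison principle Lemma~\ref{lem:comp_princ} together with the translation invariance $\med_{K,\Psi}(f+c)=\med_{K,\Psi} f + c$ for constants $c$ (already used in Step~2 of that lemma's proof). The scheme~\eqref{def:generlized_scheme} is autonomous. So for fixed $\sigma>0$, the function $\tilde w(x,t):=w(x,t+\sigma)$ is again a classical solution, with initial datum $w(\cdot,\sigma)$.

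\emph{Step 1 (shift by a constant).} Set $M_\sigma := \sup_{x}|w(x,\sigma)-w(x,0)|$. By translation invariance of the median, $w(x,t)\pm M_\sigma$ are again solutions. At $t=0$ we have
\[
w(\cdot,0)-M_\sigma \le \tilde w(\cdot,0) \le w(\cdot,0)+M_\sigma .
\]
Boundedness and continuity of the data follow from $w_0\in C^{0,1}_b$, Step~1 of the existence proof, and the Lipschitz bound. Applying Lemma~\ref{lem:comp_princ} twice then gives, for all $t\ge0$,
\[
\sup_x |w(x,t+\sigma)-w(x,t)| \le \sup_x|w(x,\sigma)-w(x,0)|.
\]
More generally, starting the argument at time $s$ instead of $0$ (again by autonomy), we get
\[
\sup_x|w(x,t+\sigma)-w(x,t)|\le \sup_x|w(x,s+\sigma)-w(x,s)| \qquad \text{for } t\ge s.
\]

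\emph{Step 2 (divide and pass to the limit).} Divide by $\sigma$ and let $\sigma\downarrow0$. Pointwise, the left side tends to $|\partial_t w(x,t)|$, so the left-hand supremum satisfies $\|\partial_t w(\cdot,t)\|_\infty \le \liminf_{\sigma\downarrow0}$ of the left side. For the right side we need
\[
\limsup_{\sigma\downarrow0}\sup_x \Big|\frac{w(x,s+\sigma)-w(x,s)}{\sigma}\Big| \le \|\partial_t w(\cdot,s)\|_\infty .
\]
Writing the difference quotient as $\frac1\sigma\int_s^{s+\sigma}\partial_t w(x,r)\,dr$, it suffices that $\partial_t w(\cdot,r)\to\partial_t w(\cdot,s)$ uniformly in $x$ as $r\downarrow s$. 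This holds because $\partial_t w=(\med_{K,\Psi}w-w)/h$. The median is $1$-Lipschitz with respect to the sup norm (Lemma~\ref{lem:contraction}), and $w$ is uniformly Lipschitz in time with constant $2\|w_0\|_\infty/h$, from Step~3 of the existence proof carried to the limit. Hence $\|\partial_t w(\cdot,r)-\partial_t w(\cdot,s)\|_\infty\le \frac{4\|w_0\|_\infty}{h^2}|r-s|$.

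\emph{Main obstacle.} The main obstacle is verifying the hypotheses of Lemma~\ref{lem:comp_princ}, specifically the existence and ordering of the limits at spatial infinity for the shifted functions. I would first try the following. Since $w_0$ is constant outside a bounded set (the setting of the existence result), the solution tends to a constant at infinity: far away the median of a function that is nearly constant on most of the kernel's mass is nearly that constant. The shift by $M_\sigma$ then orders these limits, and strictness is not needed because the lemma handles the non-strict case. If this is delicate, I would instead use the constant $M_\sigma+\delta$ and let $\delta\to0$. I would also argue directly that $w(x,t)\to c(t)$ with the ODE $c'=0$ at infinity, so the limits coincide up to the added constants.
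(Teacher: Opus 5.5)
Your overall architecture matches the paper's: show the flow is non-expansive in $L^\infty$, apply this to $w$ and its time shift $w(\cdot,\cdot+\sigma)$, then divide by $\sigma$. Your Step~2 is essentially the paper's limit passage, including the bound $\|\partial_t w(\cdot,r)-\partial_t w(\cdot,s)\|_\infty\le \frac{4\|w_0\|_\infty}{h^2}|r-s|$.

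The gap is in Step~1, where you get non-expansiveness from Lemma~\ref{lem:comp_princ}. That lemma requires that $\lim_{|x|\to\infty}$ of both functions exist and are ordered for \emph{every} $t\ge0$. Under the hypothesis of the statement, $w_0\in C^{0,1}_b(\R^d)$, this fails in general: for $w_0(x)=\sin x_1$ there is no limit at infinity, so the comparison principle cannot be invoked. The paper does use this lemma for data $\xi_0\in C^3_b(\R^d)$ that are not assumed constant at infinity.

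Your remedy restricts to data constant outside a bounded set, which is not the hypothesis. Even there, the claim that $w(\cdot,t)$ tends to the same constant at infinity for all $t>0$ is only heuristic. For general $\Psi$ the solution is not constant outside a ball; this is why Lemma~\ref{lem:exp_decay} is needed for $\Psi_\eps$. You would have to prove something like $\limsup_{|x|\to\infty}|\med_{K,\Psi}f(x)-c|\le\limsup_{|x|\to\infty}|f(x)-c|$ from the Euler--Lagrange inequalities and $K\in L^1$, and then run a Gr\"onwall argument on the Duhamel formula. The fallback with $M_\sigma+\delta$ only addresses strictness, not the existence of the limits.

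The missing idea is that no comparison principle is needed. Lemma~\ref{lem:contraction} already gives $\|\med_{K,\Psi}w-\med_{K,\Psi}v\|_\infty\le\|w-v\|_\infty$ for arbitrary bounded continuous $w,v$, with no condition at infinity. Using the equation and the time-Lipschitz bound on $\partial_t w$ from your Step~2, one has, uniformly in $x$,
\begin{equation*}
  w(\cdot,t+\tau)=\Big(1-\frac{\tau}{h}\Big)w(\cdot,t)+\frac{\tau}{h}\med_{K,\Psi}w(\cdot,t)+\mathcal{O}\Big(\frac{\tau^2}{h^2}\Big).
\end{equation*}
So for two solutions and $\tau\le h$,
\begin{equation*}
  \|w(\cdot,t+\tau)-v(\cdot,t+\tau)\|_\infty\le\|w(\cdot,t)-v(\cdot,t)\|_\infty+\mathcal{O}(\tau^2/h^2).
\end{equation*}
Summing over $N$ subintervals of $[s,t]$ and letting $N\to\infty$ gives $\|w(\cdot,t)-v(\cdot,t)\|_\infty\le\|w(\cdot,s)-v(\cdot,s)\|_\infty$ for any two bounded solutions. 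This is exactly the paper's argument. With $v=w(\cdot,\cdot+\sigma)$ it replaces your Step~1, and your Step~2 then completes the proof.
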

\begin{proof}
  Let $w$ be a solution of~\eqref{def:generlized_scheme} and $\tau > 0$ be arbitrary, then it holds
  \begin{equation*}
    w(x,t + \tau) = w(x,t) + \tau \partial_t w(x,t) + \int_{t}^{t + \tau} \partial_t w(x,s) - \partial_t w(x,t) \, ds.
  \end{equation*}
  We use the contraction property of Lemma~\ref{lem:contraction} to compute
  \begin{multline}\label{eq:partial_t_contraction}
    |\partial_t w(x,s) - \partial_t w(x,t)| \leq \left| \frac{\med_{K, \Psi} w(x,s) - \med_{K, \Psi} w(x,t)}{h} \right| + \left| \frac{w(x,s) - w(x,t)}{h} \right| \\
    \leq \frac{2}{h} \|w(\cdot,s) - w(\cdot,t)\|_{C^0(\R^d)}.
  \end{multline}
  Furthermore, we crudely estimate, using $|\med_{K, \Psi} w(\cdot,t)| \leq \|w(\cdot, t)\|_\infty \leq \|w_0\|_\infty$ by~\eqref{eq:mon_Linfty_app_scheme},
  \begin{equation}\label{eq:partial_t_contraction2}
    \|w(\cdot,s) - w(\cdot,t)\|_{C^0(\R^d)} \leq |s-t|\|\partial_t w\|_{C^0(\R^d \times [0,\infty))} \leq |s-t| \frac{2\|w_0\|_{\infty}}{h}.
  \end{equation}
  Combining the above computations yields
  \begin{equation*}
    w(t+\tau) = w(t) + \tau \partial_t w(t) + \mathcal{O}\big(\frac{\tau^2}{h^2}\big).
  \end{equation*}
  So for two solutions $v, w$ of~\eqref{def:generlized_scheme} we have again by Lemma~\ref{lem:contraction}
  \begin{align}\label{eq:difference_quotient_second_order}
    &\|w(\cdot, t+\tau) - v(\cdot, t+\tau)\|_\infty \\
    &\leq (1-\frac{\tau}{h})\|w(\cdot, t) - v(\cdot, t)\|_\infty + \frac{\tau}{h}\|\med_{K, \Psi} w(\cdot, t) - \med_{K, \Psi} v(\cdot, t)\|_\infty + \mathcal{O}\big(\frac{\tau^2}{h^2}\big) \nonumber\\
    &\leq \|w(\cdot, t) - v(\cdot, t)\|_\infty + \mathcal{O}\big(\frac{\tau^2}{h^2}\big) \nonumber.
  \end{align}
  Denote $\zeta(t):= \|w(\cdot, t) - v(\cdot, t)\|_\infty$ and compute for $0 \leq s \leq t < \infty$ that
  \begin{align*}
    \zeta(t) - \zeta(s) = \sum_{i = 1}^N \zeta\Big(t - \frac{i-1}{N}(t-s)\Big) - \zeta\Big(t - \frac{i}{N}(t-s)\Big) \leq \sum_{i = 1}^N \mathcal{O}\Big(\frac{|s-t|^2 }{(Nh)^2}\Big) = \mathcal{O}\Big(\frac{|s-t|^2 }{N h^2}\Big).
  \end{align*}
  As $s,t$ and $h$ are fixed we can pass $N \to \infty$ to obtain
  \begin{equation*}
    \|w(\cdot, t) - v(\cdot, t)\|_\infty \leq \|w(\cdot, s) - v(\cdot, s)\|_\infty.
  \end{equation*}
  Now using $v(\cdot, t) := w(\cdot, t + \tau)$ we obtain
  \begin{equation}\label{eq:before_pass_tau}
    \frac{1}{\tau}\|w(\cdot, t) - w(\cdot, t + \tau)\|_\infty \leq \frac{1}{\tau}\|w(\cdot, 0) - w(\cdot, \tau)\|_\infty.
  \end{equation}
  We can pass to the limit in the above estimate if we can show that for arbitrary $t \geq 0$ 
  \begin{equation*}
    \lim_{\tau \to 0} \frac{1}{\tau}\|w(\cdot, t) - w(\cdot, t + \tau)\| _\infty = \|\partial_t w(\cdot, t)\|_\infty.
  \end{equation*}
  But this follows by the inverse triangle inequality and the above computations~\eqref{eq:partial_t_contraction} and \eqref{eq:partial_t_contraction2}:
  \begin{align*}
    \left|\frac{1}{\tau}\|w(\cdot, t + \tau) - w(\cdot, t)\|_\infty - \|\partial_t w(\cdot, t)\|_\infty \right| &\leq \left\|\frac{w(\cdot, t + \tau) - w(\cdot, t)}{\tau} - \partial_t w(\cdot, t) \right\|_\infty\\
    &= \left\| \frac{1}{\tau} \int_{t}^{t + \tau} \partial_t w(\cdot, s) - \partial_t w(\cdot, t)\,ds\right\|_\infty\\
    &\lesssim \frac{1}{\tau} \int_{t}^{t + \tau} \frac{|s-t|}{h^2} \,ds\\
    & = \frac{\tau}{h^2}.
  \end{align*}
  Thus, we can pass to the limit in~\eqref{eq:before_pass_tau} and finish the proof.
\end{proof}

\section{Convergence to the Viscosity Solution}
\label{sec:convergence}

The level-set mean curvature flow equation is given by
\begin{equation*}
    \frac{\partial_t u}{|\nabla u|} = \nabla \cdot \left(\frac{\nabla u}{|\nabla u|} \right).
\end{equation*}
That is, the normal velocity of a level-set $\{y \in \R^d: u(y) = c\}$ given by $\partial_t u / |\nabla u|$ is equal to the divergence of the normal $\nabla u /|\nabla u|$ or in other words the mean curvature.  We will denote the associated level-set Laplacian by
\begin{equation*}
    F(\nabla \phi, \nabla^2 \phi) := \nabla \cdot \left(\frac{\nabla \phi}{|\nabla \phi|}\right) |\nabla \phi|= \Delta \phi - \frac{ \nabla \phi}{|\nabla \phi|} \cdot \nabla^2 \phi \frac{\nabla \phi}{|\nabla \phi|} 
\end{equation*}
for any $\phi \in C^2(\R^d)$ with $\nabla \phi \neq 0$. The lower semi-continuous envelope of $F$ reads
\begin{equation*}
  F_*(\nabla \phi, \nabla^2 \phi) = \begin{cases}
    \Delta \phi - \frac{ \nabla \phi}{|\nabla \phi|} \cdot \nabla^2 \phi \frac{\nabla \phi}{|\nabla \phi|} &\text{ if } \nabla \phi \neq 0,\\
    \Delta \phi - \lambda_d &\text{ if } \nabla \phi = 0,
  \end{cases}
\end{equation*}
where $\lambda_d$ is the largest eigenvalue of $\nabla^2 \phi$. The upper semi-continuous envelope is analogously given when replacing $\lambda_d$ with the smallest eigenvalue $\lambda_1$.

Let us recall the definition of a viscosity solution to level-set mean curvature flow:
\begin{definition}\label{def:viscosity}
    A \emph{viscosity super-solution} is a function $u \in C^0(\R^d \times [0, T])$ such that for any $\phi \in C^\infty(\R^d \times [0, T])$ and any minimum point $(x_0, t_0) \in \R^d \times (0,T)$ of $u - \phi$ it holds
    \begin{equation*}
      \partial_t \phi (x_0, t_0) \geq F_*(\nabla \phi(x_0, t_0), \nabla^2\phi(x_0, t_0)).
    \end{equation*}
    Furthermore, $u$ is a \emph{viscosity sub-solution} if $-u$ is a super solution. Finally, for an initial value $u_0 \in C^0(\R^d \times [0, T])$, the function $u$ is a \emph{viscosity solution} to level-set mean curvature flow if $u$ is a viscosity super- and sub-solution and if $u(\cdot, 0) = u_0$.
\end{definition}

In~\cite[Proposition 3.6]{ullrich_median} the limits
\begin{align}
  \limsup_{y \to x, h \downarrow 0} \frac{\med_h \phi - \phi}{h}(y) &\leq F^*(\nabla \phi(x), \nabla^2 \phi(x))\label{eq:consistency_upper},\\
    \liminf_{y \to x, h \downarrow 0} \frac{\med_h \phi - \phi}{h}(y) &\geq F_*(\nabla \phi(x), \nabla^2 \phi(x))\label{eq:consistency_lower}
\end{align}
were already proven when $\phi \in C^{3}(\R^d)$ is an arbitrary test function. In their proof they even showed that for $\nabla \phi \neq 0$ it holds
\begin{equation}\label{eq:strong_consistency}
  \left|\frac{\med_h \phi - \phi}{h}(x) - F(\nabla \phi(x), \nabla^2 \phi(x)) \right| \lesssim \frac{1+ \|\nabla^3 \phi \|_{\infty}}{|\nabla \phi(x)|}\sqrt{h}.
\end{equation}
Their result is for indicator functions of annuli and balls as kernels, but it is easy to adapt to our heat kernel case. Furthermore, for any $\phi, \psi \in C^{0,1}(\R^d \times [0, \infty))$ it holds by Lemma~\ref{lem:uniqueness_median} and a simple computation
\begin{align}
  \phi \leq \psi \Rightarrow \med_h (\phi)(x) \leq \med_h (\psi)(x) &\quad \text{ for any } x \in \Omega, \label{eq:monotonicity_median}\\
  \med_h(\psi + c)(x) = \med_h(\psi)(x) + c &\quad \text{ for any } c \in \R. \label{eq:med_const}
\end{align}
We will combine the proof ideas of~\cite[Theorem 2.1]{zbMATH04205918}, \cite[Theorem 3.3]{zbMATH01336919} and~\cite[Proposition 2.2]{BarlesGeorgelin} to show that the scheme~\eqref{def:semi_discrete_scheme} converges to the unique viscosity solution to level-set mean curvature flow.

\begin{proof}[Proof of Proposition~\ref{the:viscosity}]
  First we record that the comparison principle of~\cite[Theorem 3.2]{evans_spruck_I} yields the uniqueness of viscosity solutions to level-set mean curvature flow.

  Define the two limits 
  \begin{align*}
    \overline{u}(x,t)= \limsup_{y \rightarrow x, s \rightarrow t, h \downarrow 0} u_h(y,s),\\
    \underline{u}(x,t) = \liminf_{y \rightarrow x, s \rightarrow t, h \downarrow 0} u_h(y,s).
  \end{align*}
  We claim that $\overline{u}$ and $\underline{u}$ are viscosity sub- and super-solution, respectively, with $\overline{u}(\cdot, 0) = \underline{u}(\cdot, 0) = u_0$. If the claim is valid, the comparison principle for the viscosity solutions~\cite[Theorem 3.2]{evans_spruck_I} yields $\overline{u} \leq \underline{u}$. But, the opposite inequality is also true by definition of $\underline{u}$ and $\overline{u}$ implying
  \begin{equation*}
    u = \underline{u} = \overline{u}.
  \end{equation*}
  This means $u$ is the unique viscosity solution and by standard arguments~\cite[Remark 6.4]{zbMATH00089226} one concludes the local uniform convergence $u_h \rightarrow u$ in space and time. As $u_h = u = c$ outside $B_L(0)$ this local convergence is upgraded to global uniform convergence on $\R^d \times [0,T]$.

  So it remains to justify the claim.
  
  \emph{Step 1 -- Initial data $\overline{u}(\cdot, 0) = \underline{u}(\cdot, 0) = u_0$:} We focus on the proof of $\overline{u}(\cdot, 0) = u_0$ as the other case is treated similarly. As (see for example~\eqref{eq:Lip_non_increasing} later)
  \begin{equation}\label{eq:uniform_lipschitz_bound}
    \sup_{h \in (0,1], t \in [0, \infty)}\Lip( u_h(\cdot, t)) \leq \Lip(u_0) \leq C_1
  \end{equation}
  holds, we immediately get $\overline{u}(x, 0) = \limsup_{h \downarrow 0, t \downarrow 0} u_h(x,t)$. Furthermore, we have the uniform bound (see for example~\eqref{eq:max_non_increasing} later)
  \begin{equation*}
    \sup_{h \in (0,1], t \in [0, \infty)} \|u_h(\cdot,t)\|_\infty \leq \|u_0(\cdot)\|_\infty \leq C_2.
  \end{equation*}
  Thus, for any sequence $(h_k, t_k) \to (0,0)$ as $k \to \infty$ we can apply the Arzel{\`a}-Ascoli theorem to get a subsequence $(h_{k_n}, t_{k_n}) \to (0,0)$ as $n \to \infty$ such that
  \begin{equation}\label{eq:uniform_v_limit}
    \lim_{n \to \infty} u_{h_{k_n}}(\cdot, t_{k_n}) = v \quad \quad \text{ uniformly},
  \end{equation}
  for some function $v \in C_b^{0,1}(\R^d)$. We claim it has to hold $v = u_0$. By assumption, we have that $u_0 \in C^3_b(\R^d \setminus S)$. As $\R^d \setminus S$ is dense in $\R^d$, the continuity of $v$ and $u_0$ implies that it is enough to prove that $v = u_0$ on $\R^d \setminus S$. So let $x \in \R^d \setminus S$ be given. As $\R^d \setminus S$ is open by assumption, we have that $u_0$ is twice differentiable in a neighborhood of $x$. Furthermore, $u_0$ is bounded. Hence, there exists $\xi_0, \zeta_0 \in C^3_b(\R^d)$ with
  \begin{align*}
    \xi_0 \leq u_0 \leq \zeta_0 \quad \text{ on } \R^d \quad\quad \quad \text{ and } \quad \quad \quad\xi_0(x) = u_0(x) = \zeta_0(x).
  \end{align*} 
  Denote by $\xi_h$ and $\zeta_h$ the solutions to~\eqref{def:semi_discrete_scheme} with initial values $\xi_0$ and $\zeta_0$, respectively.
  By the comparison principle of Lemma~\ref{lem:comp_princ} we get
  \begin{equation*}
    \xi_h \leq u_h \leq \zeta_h.
  \end{equation*}
  Thus, if we can show
  \begin{equation}\label{eq:upper_lower_limits_match}
    \lim_{n \to \infty} \xi_{h_{k_n}}(x, t_{k_n}) = \lim_{n \to \infty} \zeta_{h_{k_n}}(x, t_{k_n}) = u_0(x)
  \end{equation}
  it follows $v(x) = u_0(x)$. We focus on $\lim_{n \to \infty} \xi_{h_{k_n}}(x, t_{k_n}) =$ the version for $\zeta$ is analogous.

  We want to show that the time derivative of $\xi_h$ is uniformly bounded. By Lemma~\ref{lem:mon_time_derivative} it is enough to check this for $t = 0$. Fix $y \in \R^d$ arbitrary and distinguish the following two cases:

  \emph{Case 1 -- Upper Bounded Gradient $|\nabla \xi_0(y)| \leq \sqrt{h}$:}
  Let $R > 0$ be the constant radius with
  \begin{equation*}
    \int_{\{z > R\}} G_h(0,z) \,dz = \frac 12.
  \end{equation*}
  Then we can assume that there exists a $|z_0| \leq R$ with $\med_h \xi_0 (x) = \xi_0(x + z_0)$ as otherwise the Euler-Lagrange inequality~\eqref{eq:ELI1} would be violated. One checks that there is a constant $C$ such that $R \leq C\sqrt{h}$ which yields $|z_0| \leq C \sqrt{h}$.
   By the case distinction and as $\xi_0 \in C^3_b(\R^d)$ we have  
  \begin{equation*}
    |\med_h \xi_0(y) - \xi_0(y)| =  |\xi_0(y+ z_0) - \xi_0(y)| \leq |\nabla \xi_0(y)| |z_0| + \mathcal{O}(|z_0|^2) = \mathcal{O}(h).
  \end{equation*}
  Thus, we have that 
  \begin{equation*}
    |\partial_t \xi_h(y, 0)| = \left| \frac{\med_h \xi_0(y) - \xi_0(y)}{h}\right| \lesssim 1.
  \end{equation*}

  \emph{Case 2 -- Lower Bounded Gradient $|\nabla \xi_0(y)| > \sqrt{h}$:} In this case we apply~\eqref{eq:strong_consistency} to immediately obtain
  \begin{align*}
    |\partial_t \xi_h(y, 0)| \lesssim \frac{1 + \|\nabla^3 \xi_0\|_\infty}{|\nabla \xi_0(y)|} \sqrt{h} + |F(\nabla \xi_0(y), \nabla^2 \xi_0(y))| \leq 1 + \|\nabla^3 \xi_0\|_\infty + \|\nabla^2 \xi_0\|_\infty < \infty.
  \end{align*}

  Combining Case 1, Case 2 and Lemma~\ref{lem:mon_time_derivative} we find
  \begin{equation*}
    \sup_{h \in (0,1], s \in [0, \infty)}\|\partial_t \xi_h(\cdot, s)\|_\infty \lesssim 1.
  \end{equation*}
  But now, as $\xi_{h_{k_n}}(x,0) = u_0(x)$, we simply have
  \begin{equation*}
    |u_0(x) - \xi_{h_{k_n}}(x, t_{k_n})| = \left| \int_{0}^{t_{k_n}} \partial_t \xi_{h_{k_n}}(x, s)\, ds\right| \leq t_{k_n} \sup_{h \in (0,1], s \in [0, \infty)} |\partial_t \xi_h(\cdot, s)| \lesssim t_{k_n}.
  \end{equation*}
  Taking the limit $n \to \infty$ yields~\eqref{eq:upper_lower_limits_match}.

  Now we know that $v = u_0$ and as implication by~\eqref{eq:uniform_v_limit} that
  \begin{equation}
    \lim_{n \to \infty} u_{h_{k_n}}(\cdot, t_{k_n}) = u_0 \quad \quad \text{ uniformly}.
  \end{equation}
  As we have identified the limit, we can get rid of the subsequences which yields
  \begin{equation*}
    \lim_{k \to \infty} u_{h_{k}}(\cdot, t_{k}) = u_0 \quad \quad \text{ uniformly}.
  \end{equation*}
  The sequence $(h_k, t_k) \to 0$ was arbitrary which gives us
  \begin{equation*}
    \limsup_{h \to 0, t \to 0} u_h(\cdot, t) = u_0 \quad \quad \text{ uniformly}.
  \end{equation*}
  Finally, by the Lipschitz continuity of $u_0$ it holds
  \begin{equation*}
    |u_h(y,t) - u_0(x)| \leq |u_h(y,t) - u_0(y)| + \Lip(u_0)|x-y|,
  \end{equation*}
  or in other words 
  \begin{equation*}
    \overline{u}(x,t) = \limsup_{h \to 0, y \to x, t \to 0} u_h(y,t) = u_0(x).
  \end{equation*}

  \emph{Step 2 -- Argument for $\overline{u}$ and $\underline{u}$ are viscosity sub- and super-solutions, respectively:}
  We focus on the proof for $\overline{u}$; the other case is identical. Let $\phi$ be a smooth test function and $(x_0, t_0)$ be a strict local maximum of $\overline{u} - \phi$. W.l.o.g.\ we can assume that $\overline{u}(x_0, t_0) = \phi(x_0, t_0)$, i.e., 
  \begin{equation*}
    \overline{u}(x,t) - \phi(x,t) \leq \overline{u}(x_0, t_0) - \phi(x_0, t_0) = 0 \quad \quad \text{ in } B_r(x_0, t_0),
  \end{equation*}
  and that $\phi \geq 2 \sup_{h} \|u_h\|_{C^0(\R^d \times [0,\infty))}$ outside the ball $B_r(x_0, t_0)$.
  Define the auxiliary functions 
  \begin{align*}
    \psi_\delta (x,y,t) = \overline{u}(x,t) - \frac{|x-y|^4}{\delta} - \phi(y,t),\\
    \psi_{h,\delta} (x,y,t) = u_h(x,t) - \frac{|x-y|^4}{\delta} - \phi(y,t).
  \end{align*}
  We claim: 
  \begin{quote}
    \emph{There exists a sequence $(x_{h,\delta}, y_{h,\delta}, t_{h,\delta})$ of local maximum points of $\psi_{h,\delta}$ that converge to $(x_\delta, x_\delta, t_\delta)$, a maximum point of $\psi_\delta$, as $h \to 0$. We furthermore claim that $(x_\delta, x_\delta, t_\delta)$ converges to $(x_0, x_0, t_0)$ as $\delta \to 0$.}
  \end{quote}
  To see this, we first note that due to $\phi \geq 2 \sup_{h} \|u_h\|_{C^0(\R^d \times [0,T])}$ outside the ball $B_r(x_0, t_0)$ we have for any $y, t \in \R^d \times [0,\infty)$ and $x \in \R^d$ that
  \begin{align*}
    u_h(x_0, t_0) - \frac{|x_0-x_0|^4}{\delta}-\phi(x_0, t_0) = 0 &\geq u_h(x, t) - 2\sup_h \|u_h\|_{C^0(\R^d \times [0,\infty))}\\
     &\geq  u_h(x, t) - \frac{|x-y|^4}{\delta} - \phi(y, t)
  \end{align*} 
  Thus, we can restrict our domain to $(x,y,t) \in \R^d \times B_r(x_0, t_0)$. Additionally, we have for any $(x,y,t) \in B_{r+1}(x_0) \times B_r(x_0, t_0)$ and any $\delta \leq \frac{1}{\|u_h\|_{\infty} + \|\phi\|_{B_r(x_0, t_0)}}$ that
  \begin{equation*}
    u_h(x,t) - \frac{|x-y|^4}{\delta} - \phi(y,t) \leq \|u_h\|_{\infty} - \frac{1}{\delta} + \|\phi\|_{B_r(x_0, t_0)} \leq 0 = u_h(x_0, t_0) - \frac{|x_0-x_0|^4}{\delta}-\phi(x_0, t_0).
  \end{equation*} 
  Hence, there have to exist global minima $(x_{h, \delta}, y_{h, \delta},t_{h, \delta})$ of $\psi_{h, \delta}$ with 
  \begin{equation*}
    (x_{h, \delta}, y_{h, \delta} t_{h, \delta}) \in B_{r+1}(x_0, x_0, t_0).
  \end{equation*}
  This implies (after passing to a subsequence) that $(x_{h, \delta}, y_{h, \delta}, t_{h, \delta}) \to (x_\delta, y_\delta, t_\delta) \in  B_{r+ 1}(x_0, x_0, t_0)$ as $h \to 0$. Taking a further subsequence we can assume that $(x_\delta, y_\delta, t_\delta) \to (\overline{x}, \overline{x}, \overline{t})$ as $\delta \to 0$.
  Let $(\tilde{x}, \tilde{y}, \tilde{t}) \in \R^d \times \R^d \times [0, \infty)$ be arbitrary and choose $(\tilde{x}_h, \tilde{t}_h) \to (\tilde{x}, \tilde{t})$ such that
  \begin{equation}\label{eq:recovery_overline_u}
    \lim_{h \to 0} u_h(\tilde{x}_h, \tilde{t}_h) = \overline{u}(\tilde{x}, \tilde{t}).
  \end{equation}
  By the maximality property of $(x_{h, \delta}, y_{h, \delta}, t_{h, \delta})$ we have
  \begin{equation*}
    u_h(\tilde{x}_h, \tilde{t}_h) -  \frac{|\tilde{x}_h-\tilde{y}|^4}{\delta}-\phi(\tilde{y},\tilde{t}_h)  \leq u_h(x_{h, \delta}, t_{h, \delta}) -  \frac{|x_{h, \delta}-y_{h, \delta}|^4}{\delta}-\phi(y_{h, \delta}, t_{h, \delta}).
  \end{equation*}
  Passing to the limit, using~\eqref{eq:recovery_overline_u} on the left and the definition of $\overline{u}$ on the right yields
  \begin{equation*}
    \overline{u}(\tilde{x}, \tilde{t}) -  \frac{|\tilde{x}-\tilde{y}|^4}{\delta}-\phi(\tilde{y}, \tilde{t}) \leq\overline{u}(x_\delta, t_\delta) -  \frac{|x_\delta-y_\delta|^4}{\delta}-\phi(y_\delta, t_\delta)   ,
  \end{equation*}
  which shows that $(x_\delta, y_\delta, t_\delta)$ is indeed a maximum point of $\psi_\delta$. 
  
  If $(\overline{x}, \overline{t}) \neq (x_0, t_0)$ then the upper semi-continuity of $\overline u$ and the strict maximality of $(x_0, t_0)$ yield the contradiction
  \begin{align*}
    \overline{u}(x_0, t_0) - \phi(x_0, t_0) \leq \limsup_{\delta \to 0}\overline{u}(x_\delta, t_\delta)  - \frac{|x_\delta-y_\delta|^4}{\delta} - \phi(y_\delta,t_\delta) &\leq \overline{u}(\overline{x}, \overline{t}) - \phi(\overline{x}, \overline{t}) \\
    &<  \overline{u}(x_0, t_0) - \phi(x_0, t_0).
  \end{align*}
  This proves our claim. 

  \medskip

  Next, we claim that it is enough to prove
  \begin{equation}\label{eq:sub_solutio_delta}
    \partial_t \phi(y_\delta, t_\delta) \leq F^*(\nabla \phi(y_\delta, t_\delta), \nabla^2 \phi(y_\delta, t_\delta)).
  \end{equation}
  Indeed, after establishing~\eqref{eq:sub_solutio_delta} one can simply pass to the limit $\delta \to 0$ and use the upper semi-continuity of $F^*$ together with the continuity of $\partial_t \phi, \nabla \phi$ and $\nabla^2  \phi$ to conclude
  \begin{equation*}
    \partial_t \phi(y_0, t_0) = \lim_{\delta \to 0} \partial_t \phi(y_\delta, t_\delta) \leq \limsup_{\delta \to 0} F^*(\nabla \phi(y_\delta, t_\delta), \nabla^2 \phi(y_\delta, t_\delta)) \leq  F^*(\nabla \phi(x_0, t_0), \nabla^2 \phi(x_0, t_0)).
  \end{equation*}

  It remains to prove~\eqref{eq:sub_solutio_delta} for which we distinguish to cases:

  \emph{Case 1 -- Non Singular $\nabla_y \phi (y_{\delta}, t_{\delta}) \neq 0$:} In this case we can assume that $|\nabla \phi (y, t)| > a > 0$ for $y, t$ close to $y_\delta, t_\delta$. By~\eqref{eq:strong_consistency}, we know that the convergence speed of the consistency~\eqref{eq:consistency_upper} only depends on $\frac{1}{|\nabla \phi|}$ which is here uniformly bounded. Hence, the space consistency~\eqref{eq:consistency_upper} is even a space time consistency
  \begin{equation}\label{eq:space_time_consistency}
    \liminf_{y \to y_\delta, h \to 0, t \to t_\delta} \frac{\med_h \phi(\cdot, t) - \phi(\cdot, t)}{h}(\tilde{y}) \leq F^*(\nabla \phi(y_\delta), \nabla^2 \phi(y_\delta)).
  \end{equation}

  Note that $(y_{h, \delta}, t_{h, \delta})$ is a maximum point of
  \begin{equation*}
    (y,t) \mapsto u(y - (y_{h, \delta} - x_{h, \delta}),t) - \frac{|x_{h, \delta}- y_{h, \delta}|^4}{\delta} - \phi(y, t).
  \end{equation*}
  This implies
  \begin{align*}
    \partial_t u_h(x_{h,\delta}, t_{h,\delta}) &=  \partial_t \phi(y_{h,\delta}, t_{h,\delta}),\\
    u(y - (y_{h, \delta} - x_{h, \delta}),t_{h,\delta}) &\leq \phi(y, t_{h,\delta}) + \xi_\delta \quad \quad \text{ for all } y \in \R^d,\\
    u(x_{h, \delta},t_{h,\delta}) &= \phi(y_{h, \delta}, t_{h,\delta}) + \xi_\delta,
  \end{align*}
  where we denoted $\xi_\delta :=  u(x_{h, \delta},t_{h,\delta}) - \phi(y_{h, \delta}, t_{h,\delta})$. By definition of our scheme~\eqref{def:semi_discrete_scheme}, the monotonicity~\eqref{eq:monotonicity_median}, and the invariance of constant translation~\eqref{eq:med_const} it holds
  \begin{align*}
    0 &= \partial_t u_h(x_{h, \delta}, t_{h, \delta}) - \frac{\med_h u_h(\cdot - (y_{h, \delta} - x_{h, \delta}), t_{h, \delta}) - u_h(\cdot - (y_{h, \delta} - x_{h, \delta}), t_{h, \delta})}{h}(y_{h, \delta})\\
    & \geq \partial_t \phi(y_{h, \delta}, t_{h, \delta}) - \frac{\med_h(\phi(\cdot, t_{h, \delta}) -\xi_\delta) - (\phi(\cdot, t_{h, \delta}) + \xi_\delta)}{h}(y_{h, \delta})\\
    &= \partial_t \phi(y_{h, \delta}, t_{h, \delta}) - \frac{\med_h \phi(\cdot, t_{h, \delta})  - \phi(\cdot, t_{h, \delta}) }{h}(y_{h, \delta}).
  \end{align*}
  Taking the limes inferior over $h$ and utilizing~\eqref{eq:space_time_consistency} yields the desired~\eqref{eq:sub_solutio_delta}:
  \begin{align*}
    0 &\geq \liminf_{h \downarrow 0} \partial_t \phi(y_{h, \delta}, t_{h, \delta}) - \frac{\med_h \phi(\cdot, t_{h, \delta})  - \phi(\cdot, t_{h, \delta}) }{h}(y_{h, \delta})\\
    &\geq \liminf_{y \to y_\delta, h \to 0, t \to t_\delta} \partial_t \phi(y, t) - \frac{\med_h \phi(\cdot, t)  - \phi(\cdot, t) }{h}(y)\\
    &\geq \partial_t \phi(y_\delta, t_\delta) - F^*(\nabla \phi(y_\delta), \nabla^2 \phi(y_\delta)).
  \end{align*}
  
  \emph{Case 2 -- Singularity $\nabla_y \phi (y_{\delta}, t_{\delta}) = 0$:}
  As $(x_{ \delta}, y_{ \delta}, t_{ \delta})$ is a local maximum point of $\psi_{ \delta}$ we have $0 = \nabla_y \psi_{ \delta}(x_{ \delta}, y_{ \delta}, t_{ \delta})$ and $0 \geq  \nabla^2_y \psi_{ \delta}(x_{ \delta}, y_{ \delta}, t_{ \delta})$ which imply
  \begin{equation}\label{eq:first_order_after_limit}
    \nabla_y \phi(y_{ \delta}, t_{ \delta}) = \frac{4 (x_{ \delta}- y_{\delta})|x_{ \delta}- y_{\delta}|^2}{\delta}
  \end{equation}
  and 
  \begin{equation}\label{eq:second_order_after_limit}
    \nabla^2_y \phi(y_{ \delta}, t_{\delta}) \geq - \frac{4 |x_{ \delta}- y_{\delta}|^2}{\delta}\mathop{Id} - \frac{8 (x_{ \delta}- y_{h,\delta})\otimes (x_{ \delta}- y_{\delta})}{\delta}.
  \end{equation}
  Hence, our case $\nabla_y \phi (y_{ \delta}, t_{ \delta}) = 0$ yields first $x_{\delta} = y_{ \delta}$ and then
  \begin{equation*}
    \nabla_y^2 \phi(y_{ \delta}, t_{\delta}) \geq 0.
  \end{equation*} 
  Define the function $\tilde{\phi}(x, t) := \frac{|x- y_{h, \delta}|^4}{\delta} + \phi(y_{h, \delta}, t)$. Recall that $(x_{h, \delta}, t_{h, \delta})$ is a local maximum point of $\psi_{h, \delta}(\cdot, y_{h,\delta}, \cdot ) = u_h -\tilde{\phi}$. Thus, we get
  \begin{align*}
    \partial_t u_h(y_{h, \delta}, t_{h,\delta}) &= \phi(y_{h, \delta}, t_{h,\delta}),\\
    u_h(x,t) &\leq \frac{|x- y_{h, \delta}|^4}{\delta} + \phi(y_{h, \delta}, t) + \xi_{\delta} \quad \quad \text{ for all } (x,t) \in \R^d \times [0,\infty),\\
    u_h(x_{h, \delta}, t_{h, \delta}) &=  \frac{|x_{h, \delta}- y_{h, \delta}|^4}{\delta} + \phi(y_{h, \delta}, t_{h,\delta})  +\xi_{\delta},
  \end{align*}
  where we denoted $\xi_{\delta} := u_h(x_{h, \delta}, t_{h, \delta}) - \frac{|x_{h, \delta}- y_{h, \delta}|^4}{\delta} -\phi(y_{h, \delta}, t_{h, \delta})$. The trick is now that the space variable $x$ and time variable $t$ are split in $\frac{|x- y_{h, \delta}|^4}{\delta} + \phi(y_{h, \delta}, t) $, so we can pull the $\phi(y_{h, \delta}, t) $ term out of the median. To be more precise, similar to the first case we compute
  \begin{align*}
    0 &= \partial_t u_h(x_{h, \delta}, t_{h, \delta}) - \frac{\med_h u_h(\cdot, t_{h, \delta}) - u_h(\cdot, t_{h, \delta})}{h}(x_{h, \delta})\\
    & \geq \partial_t \phi(x_{h, \delta}, t_{h, \delta}) - \frac{\med_h(\frac{|\cdot- y_{h, \delta}|^4}{\delta} + \phi(y_{h, \delta},  t_{h, \delta}) + \xi_{\delta}) - (\frac{|\cdot- y_{h, \delta}|^4}{\delta} + \phi(y_{h, \delta},  t_{h, \delta}) + \xi_{\delta})}{h}(x_{h, \delta})\\
    &= \partial_t \phi(x_{h, \delta}, t_{h, \delta}) - \frac{\med_h \frac{|\cdot- y_{h, \delta}|^4}{\delta}  - \frac{|\cdot- y_{h, \delta}|^4}{\delta} }{h}(x_{h, \delta}).
  \end{align*}
  Now we simply take the limes inferior in $h$ again and use the only in space dependent consistency~\eqref{eq:consistency_upper} together with 
  \begin{equation}\label{eq:vanish_diff_x_y}
    \lim_{h \to 0} x_{h, \delta} - y_{h, \delta} = x_\delta - y_\delta = 0, 
  \end{equation}
  and 
  \begin{equation}\label{eq:F_mon_happy}
    F^*(0,0) \stackrel{\eqref{eq:first_order_after_limit}}{=} F^*(\nabla_y \phi(y_\delta, t_\delta), 0) \stackrel{\eqref{eq:second_order_after_limit}}{\leq} F^*(\nabla_y \phi(y_\delta, t_\delta), \nabla^2_y \phi(y_\delta, t_\delta))
  \end{equation}
  to conclude
  \begin{align*}
    0 &\geq \liminf_{h \to 0} \partial_t \phi(x_{h, \delta}, t_{h, \delta}) - \frac{\med_h \frac{|\cdot|^4}{\delta}  - \frac{|\cdot|^4}{\delta} }{h}(x_{h, \delta} - y_{h, \delta})\\
    &\stackrel{\eqref{eq:vanish_diff_x_y}}{\geq} \liminf_{z \to 0, h \to 0}\partial_t \phi(x_{h, \delta}, t_{h, \delta}) - \frac{\med_h \frac{|\cdot|^4}{\delta}  - \frac{|\cdot|^4}{\delta} }{h}(z)\\
    &\stackrel{\eqref{eq:consistency_upper}}{\geq} \partial_t \phi(x_{ \delta}, t_{ \delta}) - F^*(0, 0)\\
    &\stackrel{\eqref{eq:F_mon_happy}}{\geq} \partial_t \phi(y_{ \delta}, t_{ \delta}) - F^*(\nabla_y \phi(y_\delta, t_\delta), \nabla^2_y \phi(y_\delta, t_\delta)).
  \end{align*}
  This finishes the second case and with it the whole proof. 
\end{proof}

\section{Energy Convergence -- Proof of Main Result}\label{sec:conv_energie}
In this section, we prove the main result, namely the convergence of the energies. In other words, we want to make the heuristic argument of Section~\ref{sec:main_result} rigorous. To that aim we need to prove a uniform bound on the $L^1$ norm of $H_h$. The following two approximations will be necessary:
\begin{itemize}
  \item[1.] To be able to work with local convergences it will be convenient to cut off the (heat-) kernel so that it has compact support. We will denote this, at the radius $R >0$ cut-off kernel by 
  \begin{equation*}
    G_{h, R}(x,y) := \mathbbold{1}_{|x-y| \leq R} G_h(x,y).
  \end{equation*}
  Then, we define the approximate scheme by
     \begin{equation}\label{def:uhR}
    \begin{cases}
      \partial_t u_{h, R} = \frac{\med_{h,R} u_{h, R} - u_{h, R}}{h} & \text{in } \R^d \times (0,\infty),\\
      u_{h, R}(\cdot, t) = u_0 &\text{on } \R^d \times \{t = 0\}
    \end{cases}
   \end{equation}
   where the cut-off median is
   \begin{equation*}
    \med_{h,R} f (x) \in \argmin_{c \in \R} \int_{\R^d} G_{h,R}(x,y) |c - f(y)| \, dy.
   \end{equation*} 
   \item[2.] As in Evans and Spruck's work~\cite{evans_spruck_IV} it is necessary to regularize the $\sign$ function in order to make $H_h$ differentiable. Consequently, we introduce the function $\Psi_\varepsilon \in C^2$ such that 
  \begin{align}\label{def:psi_eps}
    \psi_\eps := \Psi_\eps' \to \sign \quad \text{pointwise as } \eps \downarrow 0, \quad \quad \quad \frac{C_1}{\eps} \geq \psi_\varepsilon' \geq 0 \text{ on } \R,\quad   \psi_\varepsilon' > \frac{c_1}{\eps} \text{ on } \bigg[- \frac{\eps}{2}, \frac{\eps}{2}\bigg],\\
     -\psi_\varepsilon(s) = \psi_\varepsilon(-s), \quad \quad |\psi_\eps| \equiv 1 \text{ on } \R\setminus [-\eps, \eps], \nonumber
  \end{align}
  where $c_1> 0$ and $C_1>0$ are constants independent of $\eps$. 
  
   Define $u_{h, \eps, R}$ by satisfying
   \begin{equation}
    \begin{cases}\label{def:u_heps}
      \partial_t u_{h, \eps, R} = \frac{\med_{h, \eps, R} u_{h, \eps, R} - u_{h, \varepsilon, R}}{h} & \text{in } \R^d \times (0,\infty),\\
      u_{h, \eps, R}(\cdot, t) = u_0 &\text{on } \R^d \times \{t = 0\}
    \end{cases}
   \end{equation}
   where the median for any function $f$ with respect to $\Psi_\eps$ (and additionally cut-off kernel $G_{h,R}$) is naturally given by
   \begin{equation}\label{def:epsmed}
    \med_{h, \eps, R} f (x) \in \argmin_{c \in \R} \int_{\R^d} G_{h,R}(x,y) \Psi_\eps(c - f(y)) \, dy.
   \end{equation} 
   Furthermore, we define the associated approximate mean curvature by 
   \begin{equation*}
    H_{h,\eps, R}(x) := \frac{2}{\sqrt{h}} \int_{\R^d} G_{h,R}(x,y) \psi'_{\eps}(u_{h,\eps,R}(x) - u_{h, \eps,R}(y)) \, dy.
   \end{equation*}
\end{itemize}

\begin{lemma}\label{lem:mono_energie}
    The energy $E_h(u_h)$ is non-increasing in time, i.e., for any $0 \leq t_1 \leq t_2 < \infty$ it holds
    \begin{equation*}
        E_h(u_h(\cdot, t_1)) \geq E_h(u_h(\cdot, t_2)).
    \end{equation*}
\end{lemma}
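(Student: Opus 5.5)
Our plan is to make the formal computation from Section~\ref{sec:main_result} rigorous by regularizing the absolute value, and then letting the regularization vanish. Since $|\cdot|$ is not differentiable at $0$, we do not differentiate $E_h(u_h)$ directly. Instead we work with
\begin{equation*}
  E_{h,\eps}(u) := \int_{\R^d}\int_{\R^d} G_h(x,y)\, \frac{1}{\sqrt h}\Psi_\eps\big(u(x)-u(y)\big)\,dx\,dy,
\end{equation*}
with $\Psi_\eps$ as in~\eqref{def:psi_eps}. After normalizing $\Psi_\eps(0)=0$, we have $|\Psi_\eps(s)-|s||\lesssim\eps$, and the difference vanishes for $|s|\geq\eps$. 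Since $u_h(\cdot,t)$ is constant outside a bounded set (this is preserved by the scheme, using Lipschitz bounds and comparison with constants), the integrand is supported where both points lie in a bounded region or where $u_h(x)\neq u_h(y)$. Integrability therefore holds, and $E_{h,\eps}(u_h(\cdot,t))\to E_h(u_h(\cdot,t))$ as $\eps\downarrow0$ for each fixed $t$. It thus suffices to show that $t\mapsto E_{h,\eps}(u_h(\cdot,t))$ is non-increasing up to an error $o(1)$ as $\eps\to0$, uniformly on $[t_1,t_2]$.

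\emph{Step 1: differentiate the regularized energy.} The solution $u_h$ is $C^1$ in time with bounded $\partial_t u_h$ (Section~\ref{sec:existence}), so we may differentiate under the integral. Using the symmetry of $G_h$ and the oddness of $\psi_\eps$, we obtain
\begin{equation*}
  \frac{d}{dt}E_{h,\eps}(u_h) = \int_{\R^d} \Big(\frac{2}{\sqrt h}\int_{\R^d} G_h(x,y)\,\psi_\eps\big(u_h(x)-u_h(y)\big)\,dy\Big)\,\partial_t u_h(x)\,dx .
\end{equation*}

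\emph{Step 2: the sign argument.} Fix $x$ and suppose $\partial_t u_h(x)>0$, i.e.\ $m:=\med_h u_h(x)>u_h(x)$. The monotonicity of $\psi_\eps$ gives
\begin{equation*}
  \int G_h(x,y)\psi_\eps\big(u_h(x)-u_h(y)\big)\,dy\le \int G_h(x,y)\psi_\eps\big(m-u_h(y)\big)\,dy .
\end{equation*}
The right-hand side differs from $\int G_h(x,y)\sign(m-u_h(y))\,dy$ by at most $\int G_h(x,y)\mathbbold{1}_{|m-u_h(y)|<\eps}\,dy$. By the Euler--Lagrange inequality for the median (Lemma~\ref{lem:uniqueness_median}), $\int G_h(x,y)\sign(m-u_h(y))\,dy$ lies between $-\rho$ and $\rho$, where $\rho:=\int G_h(x,y)\mathbbold{1}_{u_h(y)=m}\,dy$. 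Hence
\begin{equation*}
  \int G_h(x,y)\psi_\eps\big(m-u_h(y)\big)\,dy \le 2\int G_h(x,y)\mathbbold{1}_{|m-u_h(y)|\le\eps}\,dy =: r_\eps(x,t).
\end{equation*}
The case $\partial_t u_h(x)<0$ is symmetric, with the sign reversed. Consequently
\begin{equation*}
  \frac{d}{dt}E_{h,\eps}(u_h)\le \frac{2}{\sqrt h}\int_{\R^d} r_\eps(x,t)\,|\partial_t u_h(x,t)|\,dx .
\end{equation*}

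\emph{Step 3: the error term, which is the main obstacle.} We need $r_\eps\to0$ in a sense strong enough to integrate against $|\partial_t u_h|$, which is bounded and supported on a bounded set. As $\eps\downarrow0$, $r_\eps(x,t)$ decreases to $2\int G_h(x,y)\mathbbold{1}_{u_h(y)=\med_h u_h(x)}\,dy$. This limit need not vanish: $u_h$ may have flat regions, for instance where $u_0$ is constant, exactly at the median level. At such $x$, however, the sign argument should already be exact without any error. I would refine Step 2 as follows. If $u_h(x)<m$, then $\psi_\eps(u_h(x)-u_h(y))\le-1$ for $u_h(y)\geq u_h(x)+\eps$, in particular on $\{u_h(y)=m\}$ once $\eps<m-u_h(x)$. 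So the plateau mass at level $m$ enters with weight $-1$ on the left-hand side rather than $0$. The Euler--Lagrange inequality $\int G_h(x,y)\mathbbold{1}_{u_h(y)<m}\,dy\le\frac12\int G_h$ then gives a nonpositive contribution up to an error involving only $\int G_h(x,y)\mathbbold{1}_{|u_h(y)-u_h(x)|<\eps}\,dy$. That quantity tends to $\int G_h(x,y)\mathbbold{1}_{u_h(y)=u_h(x)}\,dy$, which again may not vanish, and must be controlled at that level. So the integrand of $\frac{d}{dt}E_{h,\eps}$ at $x$ is bounded by $C h^{-1/2}\,|\partial_t u_h(x)|\,\min\{1,\eta_\eps(x,t)\}$. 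Here $\eta_\eps(x,t)\to0$ pointwise at every $(x,t)$ with $u_h(x,t)\ne\med_h u_h(x,t)$, i.e.\ wherever $\partial_t u_h\neq0$, with the plateau contributions handled as above. Dominated convergence in $(x,t)\in B_L\times[t_1,t_2]$ then shows that the integrated error vanishes. Integrating from $t_1$ to $t_2$ and letting $\eps\downarrow0$ yields $E_h(u_h(\cdot,t_2))\le E_h(u_h(\cdot,t_1))$.

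If this plateau bookkeeping becomes too delicate, my fallback is to replace $\med_h$ by the strictly convex $\med_{h,\eps}$ from~\eqref{def:epsmed}, for which the Euler--Lagrange equation holds with equality and the argument is exact. The general comparison and existence theory of Section~\ref{sec:existence} applies to this scheme. I would then pass $\eps\to0$ at the level of solutions, using the contraction and continuity properties of the median, to get $u_{h,\eps}\to u_h$ locally uniformly, and finally use the lower semicontinuity of $E_h$ together with the convergence of $E_{h,\eps}(u_0)$ to $E_h(u_0)$.
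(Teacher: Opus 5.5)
Your main route (regularize only the energy and keep the true solution $u_h$) works, but Step 3 as written does not close. After your refinement you are left with the error $\int G_h(x,y)\mathbbold{1}_{\{|u_h(y)-u_h(x)|<\eps\}}\,dy$. You note yourself that its limit need not vanish, and then simply assert $\eta_\eps\to0$.

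The missing observation is that this band needs no separate treatment. Fix $(x,t)$ with $a:=u_h(x,t)<m:=\med_h u_h(x,t)$, and take $0<\eps\le m-a$. Use only $\psi_\eps\le 1$ on $\{u_h<a+\eps\}$, the identity $\psi_\eps(a-u_h(y))=-1$ on $\{u_h\ge a+\eps\}$, and then $a+\eps\le m$. This gives
\begin{align*}
\int_{\R^d} G_h(x,y)\,\psi_\eps\big(a-u_h(y)\big)\,dy
&\le \int_{\{u_h<a+\eps\}} G_h(x,y)\,dy-\int_{\{u_h\ge a+\eps\}} G_h(x,y)\,dy\\
&\le \int_{\{u_h<m\}} G_h(x,y)\,dy-\int_{\{u_h\ge m\}} G_h(x,y)\,dy\le 0
\end{align*}
by \eqref{eq:ELI2}. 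The whole band around the level $a$ lies inside $\{u_h<m\}$, which the Euler--Lagrange inequality already controls. The case $a>m$ is symmetric via \eqref{eq:ELI1}. The detour through $\psi_\eps(m-u_h(y))$ in your Step 2 is what produced the spurious plateau error, so you can drop it.

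With this, the integrand of $\frac{d}{dt}E_{h,\eps}(u_h)$ is exactly nonpositive wherever $h|\partial_t u_h|\ge\eps$. On the remaining set $\{0<h|\partial_t u_h|<\eps\}$, which lies in $B_L$, it is at most $2\eps h^{-3/2}$, since $\int G_h(x,y)\,dy=1$. Hence $E_{h,\eps}(u_h(\cdot,t_2))\le E_{h,\eps}(u_h(\cdot,t_1))+2\eps h^{-3/2}|B_L|(t_2-t_1)$, and letting $\eps\downarrow0$ concludes without any plateau bookkeeping.

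Two small corrections:
\begin{itemize}
\item Constancy of $u_h$ outside $B_L$ does not follow from Lipschitz bounds and comparison with constants, which only control $\|u_h\|_\infty$ and $\Lip(u_h)$. It needs comparison with the stationary barriers $c\pm\Lip(u_0)\max\{0,\lambda\cdot x+L\}$; this is Lemma~\ref{lem:const_outside_ball}.
\item For $|s|\ge\eps$, $\Psi_\eps(s)-|s|$ is a constant of size at most $\eps$, not zero. This is harmless.
\end{itemize}

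The paper proves the lemma by your fallback. It works with $u_{h,\eps,R}$, the solution of the scheme with the smooth median $\med_{h,\eps,R}$ and the kernel cut off at radius $R$. There the Euler--Lagrange equation holds with equality and the sign argument is exact. It then lets $\eps\downarrow0$ and $R\uparrow\infty$ via Lemma~\ref{lem:uniform_convergenceReps}. Since $u_{h,\eps,R}$ is not constant outside a ball, finiteness of the regularized energies and the limit passage at both $t_1$ and $t_2$ rest on the exponential decay of Lemma~\ref{lem:exp_decay}, which is where the cut-off is used. Your fallback sketch skips this, and as written covers only $t_1=0$ unless you restart the flow at $t_1$.

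The paper's route has the advantage of being the same regularized framework that is indispensable for Proposition~\ref{lem:monotonicity_Hh}, where $H$ must be differentiable in time. Your corrected main route is more elementary for this lemma alone. It needs only Lemma~\ref{lem:const_outside_ball} and the non-smooth Euler--Lagrange inequalities, and it gives an explicit $O(\eps)$ error.
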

\begin{proof}
    \emph{Step 1 -- The $\eps$-regularized case}: We claim the lemma is true when we use the energy and median scheme with the $\eps$ regularized modulus, i.e.,
    \begin{equation}\label{eq:energy_mono_eps}
        \frac{d}{dt} \frac{1}{\sqrt{h}} \int_{\R^d}\int_{\R^d} G_{h,R}(x,y) \Psi_\eps(u_{h,\eps, R}(x) - u_{h, \eps, R}(y)) \, dy \, dx   \leq 0.
    \end{equation}

    Indeed, computing the time derivative yields using non-local integration by parts
    \begin{multline*}
         \frac{d}{dt} \frac{1}{\sqrt{h}} \int_{\R^d}\int_{\R^d} G_{h,R}(x,y) \Psi_\eps(u_{h,\eps, R}(x) - u_{h, \eps, R}(y)) \, dy \, dx \\
         =  \int_{\R^d} H_{h, \eps, R}(x)  \frac{\med_{h,\eps, R} u_{h,\eps,R}(x) - u_{h, \eps, R}(x)}{h}\, dx.
    \end{multline*}
    We now claim that for any $x \in \R^d$ we have the pointwise estimate
    \begin{equation}\label{eq:different_signs_eps_h_r}
        H_{h, \eps, R}(x)  \big(\med_{h,\eps, R} u_{h,\eps,R}(x) - u_{h, \eps, R}(x)\big) \leq 0
    \end{equation}
    
To see that~\eqref{eq:different_signs_eps_h_r} is indeed true, fix $x \in \R^d$. Assume (the other case is similar) that 
\begin{equation*}
  \med_{h,\eps,R} u_{h,\eps,R}(x) - u_{h,\eps,R}(x) \geq 0.
\end{equation*}
Then, it has to hold $H_{h,\eps,R}(x) \leq 0$ as otherwise one would have the contradiction (using the monotonicity of $\psi_\eps$with $\med_{h, \eps, R} u_{h,\eps,R}(x) > u_{h,\eps,R}(x)$)
\begin{align*}
  0 < H_{h,\eps,R}(x)& = \frac{2}{\sqrt{h}} \int_{\R^d} G_{h, R}(x,y) \psi_\eps(u_{h,\eps,R}(x) - u_{h,\eps,R}(y)) \,dy\\
  &\leq \frac{2}{\sqrt{h}} \int_{\R^d} G_{h,R}(x,y)  \psi_\eps(\med_{h, \eps, R} u_{h,\eps,R}(x) - u_{h,\eps,R}(y)) \,dy =0.
\end{align*} 
Here we used in the last equation the Euler-Lagrange equation, resulting from the differentiability of $\Psi_\eps$ in~\eqref{eq:ELI1} and~\eqref{eq:ELI2}.
Thus,~\eqref{eq:different_signs_eps_h_r} has to hold, which in turn implies~\eqref{eq:energy_mono_eps}.

\emph{Step 2 -- Taking the limit $\eps \downarrow 0$}: Integrating~\eqref{eq:energy_mono_eps} in time yields for any $0 \leq t_1 \leq t_2 < \infty$
\begin{align}
     \frac{1}{\sqrt{h}} \int_{\R^d}\int_{\R^d} G_{h,R}(x,y) \Psi_\eps(u_{h,\eps, R}(x, t_1) - u_{h, \eps, R}(y, t_1)) \, dy \, dx \label{eq:integrated_mono}\\
     \geq  \frac{1}{\sqrt{h}} \int_{\R^d}\int_{\R^d} G_{h,R}(x,y) \Psi_\eps(u_{h,\eps, R}(x, t_2) - u_{h, \eps, R}(y, t_2)) \, dy \, dx. \nonumber
\end{align}
Since $\lim_{\eps \downarrow 0} u_{h,\eps,R}  = u_{h,R}$ uniformly on $\R^d \times \{t = t_i\}$, $i = 1,2$, by Lemma~\ref{lem:uniform_convergenceReps}, $\Psi_\eps \rightarrow |\cdot|$ uniformly, and $u_{h,\eps,R}$ has exponential decay by Lemma~\ref{lem:exp_decay}, we have
\begin{multline*}
    \lim_{\eps \downarrow 0} \int_{\R^d}\int_{\R^d} G_{h,R}(x,y) \Psi_\eps(u_{h,\eps, R}(x, t_i) - u_{h, \eps, R}(y, t_i)) \, dy \, dx \\
    = \int_{\R^d}\int_{\R^d} G_{h,R}(x,y) |u_{h, R}(x, t_i) - u_{h, R}(y, t_i)| \, dy \, dx.
\end{multline*}
Similarly, using $\lim_{R\uparrow \infty} u_{h, R} = u_{h}$ uniformly by Lemma~\ref{lem:uniform_convergenceReps} and $u_{h,R}$ is constant outside a large ball $B_L(0)$ by Lemma~\ref{lem:const_outside_ball} we have
\begin{equation*}
    \lim_{R\uparrow \infty} \int_{\R^d}\int_{\R^d} G_{h,R}(x,y) |u_{h, R}(x, t_i) - u_{h, R}(y, t_i)| \, dy \, dx\\
     = \int_{\R^d}\int_{\R^d} G_{h}(x,y) |u_{h}(x, t_i) - u_{h}(y, t_i)| \, dy \, dx.
\end{equation*}
Hence, in~\eqref{eq:integrated_mono} we can pass first to the limit $\eps \to 0$ and then $R \to \infty$ which concludes the proof of this lemma.
\end{proof}

Now for the approximation of the scalar mean curvature we have the crucial $L^1$ bound.
\begin{proposition}\label{lem:monotonicity_Hh}
  It holds for any $0 \leq t_1 \leq t_2$ that 
  \begin{equation*}
    \int_{\R^d} |H_{h,\eps, R}(x,t_2)| \, dx \leq \int_{\R^d} |H_{h,\eps,R}(x,t_1)| \, dx.
  \end{equation*}
\end{proposition}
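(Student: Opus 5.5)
The plan is to make the formal computation of the ``Idea of Proof'' section rigorous at the regularized level, where every object is smooth enough. Throughout I write $u:=u_{h,\eps,R}$ and $H:=H_{h,\eps,R}$, and I read $H$ as
\[
H(x)=\frac{2}{\sqrt h}\int G_{h,R}(x,y)\,\psi_\eps(u(x)-u(y))\,dy .
\]
This is the $\psi_\eps$ version that appears in the proof of Lemma~\ref{lem:mono_energie}.

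\emph{Regularity and integrability.} First I record the regularity of $u$. By Lemma~\ref{lem:contraction}, the map $t\mapsto\med_{h,\eps,R}u(\cdot,t)$ is Lipschitz in $L^\infty$. Hence $\partial_t u=(\med_{h,\eps,R}u-u)/h$ is Lipschitz in time, uniformly in $x$. Since $\psi_\eps\in C^1$ and $G_{h,R}$ has compact support, $H$ is then Lipschitz in $t$, and for a.e.\ $t$
\[
\partial_t H(x)=\frac{2}{\sqrt h}\int G_{h,R}(x,y)\,\psi_\eps'(u(x)-u(y))\,\bigl(\partial_tu(x)-\partial_tu(y)\bigr)\,dy .
\]
For integrability, $u$ is constant outside a large ball (Lemma~\ref{lem:const_outside_ball}, or the exponential decay of Lemma~\ref{lem:exp_decay}). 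Since $\psi_\eps$ is odd and $G_{h,R}$ is symmetric, $H$ vanishes outside a bounded set. Therefore $t\mapsto\int|H|\,dx$ is Lipschitz. Its a.e.\ derivative is $\int\sign(H)\,\partial_tH\,dx$, using the standard fact that $\partial_tH=0$ for a.e.\ $t$ on $\{H=0\}$. To avoid this fact, I can instead compute with a smooth convex $\eta_\delta\to|\cdot|$ and pass $\delta\to0$ at the end.

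\emph{Symmetrization.} Next I symmetrize, using that $G_{h,R}$ is symmetric and $\psi_\eps'$ is even:
\[
\frac{d}{dt}\int|H|\,dx=\frac{2}{\sqrt h}\iint G_{h,R}(x,y)\,\psi'_\eps(u(x)-u(y))\,\bigl(\sign H(x)-\sign H(y)\bigr)\,\partial_tu(x)\,dy\,dx .
\]
It then suffices to show that the integrand is $\le0$ pointwise in $(x,y)$.

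\emph{Pointwise sign.} Fix $x$ with $\partial_tu(x)>0$, i.e.\ $m:=\med_{h,\eps,R}u(x)>u(x)$; the case $\partial_t u(x)<0$ is symmetric, and $\partial_t u(x)=0$ is trivial. Since $\psi_\eps$ is non-decreasing,
\[
H(x)\le\frac2{\sqrt h}\int G_{h,R}(x,y)\,\psi_\eps(m-u(y))\,dy=0,
\]
where the equality is the Euler--Lagrange equation \eqref{eq:ELI1}--\eqref{eq:ELI2}.

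\emph{The main obstacle} is the degenerate case $H(x)=0$. There $\sign H(x)=0$, while $\sign H(y)$ could be $-1$, which would give the wrong sign. I handle it via the strictness of $\psi_\eps$ on $[-\eps,\eps]$, coming from $\psi_\eps'>c_1/\eps$ in \eqref{def:psi_eps}. Equality $H(x)=0$ forces $\psi_\eps(u(x)-u(y))=\psi_\eps(m-u(y))$ for $G_{h,R}(x,\cdot)$-a.e.\ $y$. With $m>u(x)$, this is only possible if both arguments lie in the region where $|\psi_\eps|\equiv1$. Then $|u(x)-u(y)|\ge\eps$, and it remains to check that $\psi_\eps'(u(x)-u(y))=0$ there (choosing $\psi_\eps$ constant off $[-\eps,\eps]$, as \eqref{def:psi_eps} allows). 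So in this case the integrand vanishes for a.e.\ relevant $y$. Otherwise $H(x)<0$, so $\sign H(x)=-1\le\sign H(y)$. The integrand is then nonpositive because $G_{h,R}\ge0$, $\psi'_\eps\ge0$ and $\partial_tu(x)>0$.

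\emph{Conclusion.} Integrating the a.e.\ inequality $\frac{d}{dt}\int|H|\,dx\le0$ over $[t_1,t_2]$ gives the claim. If I use the $\eta_\delta$-route instead, the same case analysis shows that the bad contributions are supported where $|H(x)|\lesssim\delta$, and these vanish as $\delta\to0$ by dominated convergence.
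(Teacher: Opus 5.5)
Your argument is correct and is essentially the paper's proof. It uses the same symmetrization of $\int \sign(H)\,\partial_t H$ and the same Euler--Lagrange comparison giving $H(x)\le 0$ when $\med_{h,\eps,R}u(x)>u(x)$. The paper uses a $C^1$ regularization $\eta_\delta$ of $|\cdot|$ instead of your a.e.\ chain rule, and it disposes of $\{H=0\}$ via the observation that $H(x)=0$ if and only if $\med_{h,\eps,R}u(x)=u(x)$. So your degenerate case $H(x)=0$, $\partial_t u(x)\neq 0$ is actually empty; in any event, $\psi_\eps(a)=\psi_\eps(b)$ with $a<b$ and $\psi_\eps$ non-decreasing and $C^1$ already gives $\psi_\eps'(a)=0$, without your claim that $|\psi_\eps|=1$ there.

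One correction: Lemma~\ref{lem:const_outside_ball} does not apply to $u_{h,\eps,R}$, as the paper explicitly points out, so $H$ need not vanish outside a bounded set. The integrability of $H$ and $\partial_t H$ has to come from the exponential decay in Lemma~\ref{lem:exp_decay}, which is enough.
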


A simple compactness argument will then use this $L^1$ bound to get a well-behaved limit of the approximate mean curvature. Note, the following lemma that states this convergence, is written in form of subsequences, because we will later  carefully pick a $\eps(h)$ and $R(h)$ dependent on $h$.
\begin{lemma}\label{lem:limit_of_Hh}
 For any sequence $(h_k, \eps_k, R_k)_{k\in \N}$ converging to $(0,0, \infty)$ and satisfying the bound
 \begin{equation*}
  \sup_{k}  \int_{\R^d} |H_{h_k, \eps_k, R_k}(x, t = 0) |\,dx < \infty,
 \end{equation*} 
  there exists a measure $\mu$ such that
  \begin{equation*}
    H_{h_k, \eps_k, R_k} dx \,dt \xrightharpoonup{*} \mu
  \end{equation*}
  as measures in time and space. Furthermore, there exists a measurable vector field $\sigma:\R^d \times [0, \infty) \to \R^d$ such that $|\sigma| \leq 2c_0 = \frac{2}{\sqrt{\pi }}$ and $\mu = \nabla \cdot \sigma \, dx \,dt$.
\end{lemma}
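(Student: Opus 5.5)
The plan is to first extract a weak-$*$ limit from the uniform $L^1$ bound, and then identify that limit separately by rewriting $H_{h_k,\eps_k,R_k}$ as a genuine divergence, to leading order, of an explicit bounded local vector field. Throughout, I read the $\psi'_\eps$ in the definition of $H_{h,\eps,R}$ as $\psi_\eps$, consistent with the proof of Lemma~\ref{lem:mono_energie}. With this reading, $H_{h,\eps,R}=\div_h\zeta$ for the antisymmetric non-local field
\begin{equation*}
\zeta(x,y):=\psi_\eps\big(u_{h,\eps,R}(x)-u_{h,\eps,R}(y)\big),
\end{equation*}
where the cut-off kernel $G_{h,R}$ replaces $G_h$ in $\div_h$. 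By construction, $|\zeta|\le 1$.

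\emph{Step 1: compactness.} Proposition~\ref{lem:monotonicity_Hh} and the assumed bound at $t=0$ give
\begin{equation*}
\sup_k\sup_{t\ge0}\int_{\R^d}|H_{h_k,\eps_k,R_k}(x,t)|\,dx\le C.
\end{equation*}
Hence the total variation of $H_{h_k,\eps_k,R_k}\,dx\,dt$ on $\R^d\times[0,T]$ is at most $CT$ for every $T$. Banach--Alaoglu on each slab $\R^d\times[0,T]$, together with a diagonal argument in $T\in\N$, yields a subsequence and a locally finite Radon measure $\mu$ with $H_{h_k,\eps_k,R_k}\,dx\,dt\xrightharpoonup{*}\mu$ against $C_c(\R^d\times[0,\infty))$. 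Since these measures are uniformly bounded, it suffices to identify $\mu$ on the dense class $C^\infty_c$.

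\emph{Step 2: representation as a local divergence.} Fix $\varphi\in C_c^\infty(\R^d\times[0,\infty))$. Non-local integration by parts, i.e.\ the defining adjointness of $\div_h$ with kernel $G_{h,R}$, gives
\begin{equation*}
\int H\varphi\,dx=\int\int G_{h,R}(x,y)\,\zeta(x,y)\,\frac{\varphi(x)-\varphi(y)}{\sqrt h}\,dy\,dx .
\end{equation*}
I substitute $y=x+\sqrt h z$, so that $G_h(x,y)\,dy=G_1(z)\,dz$ with $G_1$ the Gaussian of variance $2$, and Taylor expand $\varphi(x)-\varphi(x+\sqrt h z)=-\sqrt h\,z\cdot\nabla\varphi(x)+O(h|z|^2\|\nabla^2\varphi\|_\infty)$. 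This gives $\int H\varphi\,dx=\int\sigma_k\cdot\nabla\varphi\,dx+o(1)$, where
\begin{equation*}
\sigma_k(x,t):=-\int_{|z|\le R_k/\sqrt{h_k}}G_1(z)\,\zeta\big(x,x+\sqrt{h_k}z\big)\,z\,dz .
\end{equation*}
The error term is $O(\sqrt{h_k})$. To see this, note that the integrand vanishes unless $x$ or $y$ lies in $\operatorname{supp}\varphi$, so after integrating against the Gaussian moments $\int G_1|z|^2\,dz<\infty$ the $x$-integral runs effectively over a compact set, up to Gaussian tails. The same holds uniformly in $t$ on the time support of $\varphi$.

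\emph{Step 3: bound and limit.} Using $|\zeta|\le1$,
\begin{equation*}
|\sigma_k\cdot e|\le\int_{\R^d}G_1(z)|z\cdot e|\,dz=2\int_0^\infty s\,\frac{e^{-s^2/4}}{\sqrt{4\pi}}\,ds=\frac{2}{\sqrt\pi}=2c_0
\end{equation*}
for every unit vector $e$, so $|\sigma_k|\le 2c_0$. Passing to a further subsequence, $\sigma_k\xrightharpoonup{*}\tilde\sigma$ in $L^\infty(\R^d\times[0,\infty))$ with $|\tilde\sigma|\le2c_0$; this bound is preserved because the closed ball in $L^\infty$ is weak-$*$ closed. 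Passing to the limit in Step 2 integrated in time gives $\int\varphi\,d\mu=\int\tilde\sigma\cdot\nabla\varphi\,dx\,dt$. This means $\mu=\nabla\cdot\sigma\,dx\,dt$ distributionally, with $\sigma:=-\tilde\sigma$.

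The main obstacle is the bookkeeping in Step 2, namely that the Taylor remainder is controlled even though $\zeta$ is not compactly supported. This is handled by the compact support of $\varphi$, the Gaussian moment bounds, and the fact that the cut-off region $|z|>R_k/\sqrt{h_k}$ only contributes terms that vanish. The sign and factor-$2$ conventions in $\div_h$ are fixed consistently by using the adjointness identity directly rather than the explicit formula for $\div_h$.
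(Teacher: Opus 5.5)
Your proof is correct and follows essentially the paper's route: compactness from Proposition~\ref{lem:monotonicity_Hh}, non-local integration by parts against a smooth test function after the substitution $y=x\pm\sqrt{h}z$, weak-$*$ compactness of a field bounded by $1$, and the bound $2c_0=\int G_1(z)|z\cdot e|\,dz$. The only minor difference is the order of two steps: you average in $z$ before taking the weak-$*$ limit, so $|\sigma_k|\le 2c_0$ holds for every $k$ and passes to the limit by weak-$*$ closedness, whereas the paper first extracts a weak-$*$ limit of the non-local field $\psi_{\eps_k}(u(x)-u(x-\sqrt{h_k}z))$ in the variables $(x,z,t)$ and only then integrates against $zG_1(z)$; your sign $\sigma=-\tilde\sigma$ is in fact the correct one, since the paper's $\sigma$ as written satisfies $\mu=-\nabla\cdot\sigma$ (harmless, as the bound $|\sigma|\le 2c_0$ is symmetric).
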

  
\begin{proof}[Proof of Proposition \ref{lem:monotonicity_Hh}]  
   \emph{Step 1 -- Introducing $\delta$:} Take $\eta_\delta \in C^1(\mathbb{R}, \mathbb{R})$ with $\eta_\delta \to |\cdot|$ uniformly as $\delta \downarrow 0$ and $\eta'(s) = \sign(s)$ for $|s| > \delta$ as well as $|\eta'| \leq 1$. For simpler readability we drop the $h,\eps,R$ subscript for $H_{h, \eps, R}$, $u_{h,\eps,R}$ and $\med_{h,\eps,R}$, i.e., we write $\Hher$ for $H_{h, \eps, R}$, $\uher$ for $u_{h,\eps,R}$ and $\med$ for $\med_{h,\eps,R}$. Then the derivative of the $L^1$-norm with the modulus replaced by $\eta_\delta$ computes as (suppressing the $t$-dependence on the right-hand side for notational convenience)
  \begin{align*}
    &\frac{d}{dt} \int_{\R^d} \eta_\delta (\Hher(x,t)) \, dx \\
    &= \int_{\R^d} \eta_\delta '(\Hher(x)) \int_{\R^d} \frac{1}{\sqrt{h}} G_{h,R}(x,y) \psi_\eps'\big(\uher(x) - \uher(y)\big) \left(\frac{\med \uher(x) - \uher(x)}{h} - \frac{\med \uher(y) - \uher(y)}{h} \right)\!\! \, dy \, dx\\
    &= \frac{1}{h\sqrt{h}} \int_{\R^d} \int_{\R^d} \big( \eta_\delta '(\Hher(x)) - \eta_\delta '(\Hher(y)) \big)   G_{h,R}(x,y) \psi_\eps'\big(\uher(x) - \uher(y)\big) \big(\med \uher(x) - \uher(x)\big) \, dy \, dx.
  \end{align*}

  \emph{Step 2 -- Opposite Signs:} We claim, if $|\Hher(x,t)| \geq \delta$  then it holds
  \begin{equation}\label{eq:sign_of_integrand}
    \big( \eta_\delta '(\Hher(x,t)) - \eta_\delta '(\Hher(y,t)) \big)   G_{h,R}(x,y) \psi_\eps'(\uher(x) - \uher(y)) (\med \uher(x) - \uher(x)) \leq 0,
  \end{equation}
  and if $|\Hher(x,t)| = 0 $ then $\med  \uher (x) = \uher(x)$.
  
  We focus on the case $\Hher(x,t) \geq 0$; the other sign works analogously. Note the Euler-Lagrange inequalities~\eqref{eq:ELI1}, \eqref{eq:ELI2} yield an Euler-Lagrange equality by the differentiability of $\Psi_\eps$. Hence, by an analogous computation to~\eqref{eq:sign_trick} it holds $\Hher(x,t) = 0$ iff $\med \uher(x) = \uher(x)$ and $\Hher(x,t) > 0$ iff $\med \uher(x) < \uher(x)$. Hence, if $\Hher(x,t) \geq \delta$ then it holds $\med \uher(x) < \uher(x)$ and $\eta_\delta'(\Hher(x,t)) = 1$  implies $\eta_\delta '(\Hher(x,t)) - \eta_\delta '(\Hher(y,t)) \geq 0$ due to $|\eta_\delta'| \leq 1$. As $G_{h,R} \geq 0$ and $\psi_\eps' \geq 0$ it follows that the inequality \eqref{eq:sign_of_integrand} is satisfied in the case  $\Hher(x) \geq \delta$.

\emph{Step 3 -- Integrating in Time:} Combining Step 1 and Step 2 yields
\begin{align}
  &\frac{d}{dt} \int \eta_\delta (\Hher(x,t)) \, dx \label{eq:comb_steps}\\
  &\leq \frac{1}{h\sqrt{h}}\int_{ \{|\Hher(x,t)| < \delta\}} \int_{\R^d} \big( \eta_\delta '(\Hher(x,t)) - \eta_\delta '(\Hher(y,t)) \big)   \\
  &\hspace{100pt} \times G_{h, R}(x,y) \psi_\eps'(\uher(x,t) - \uher(y,t)) \left(\med \uher(x,t) - \uher(x,t)\right) \, dy \, dx.\nonumber
\end{align}
%As $\uher$ is constant outside a big ball $B_{L}(0)$ and $ G_{h,R}(x,y) = 0$ for $|x-y|>R$ it holds 
%\begin{equation*}
%  G_{h,R}(x,y)\psi_\eps(\uher(x) - \uher(y)) = 0  \quad \quad \text{for any } (x,y) \in (\R^{2d} \setminus B^2_{R+L}(0)).
%\end{equation*}

Integrating~\eqref{eq:comb_steps} in time yields
\begin{align}\label{eq:comb_integrated}
  &\int \eta_\delta(\Hher(x,t_2)) \, dx \\
  &\leq \int \eta_\delta(\Hher(x,t_1)) \, dx + \frac{1}{h\sqrt{h}}  \int_{t_1}^{t_2} \int_{\{|\Hher(x,t)| < \delta\}} \big( \eta_\delta '(\Hher(x,t)) - \eta_\delta '(\Hher(y,t)) \big) \nonumber\\
  &\hspace{140pt} \times G_{h, R}(x,y) \psi_\eps'(\uher(x,t) - \uher(y,t)) \left(\med \uher(x,t) - \uher(x,t)\right) \, dy \, dx \, dt. \nonumber
\end{align}
As, we have no control of the prefactor $\frac{1}{h\sqrt{h}}$ in the last term of the right-hand side, we have to show that the whole term vanishes as $\delta \downarrow 0$ similarly as in Evans and Spruck~\cite{evans_spruck_IV}. The approximation $\eta_\delta$ was specifically chosen for this purpose.

\emph{Step 4 -- the limit $\delta \downarrow 0$:} Combining the fact that $\Hher(x,t)$ has exponential decay as $|x| \rightarrow \infty$ by Lemma~\ref{lem:exp_decay} with the uniform convergence of $\eta_\delta \to |\cdot|$ we conclude at any time $t$
\begin{equation}\label{eq:delta_energy}
  \lim_{\delta \downarrow 0} \int_{\R^d} \eta_\delta(\Hher(x,t)) \, dx \rightarrow \int_{\R^d}|\Hher(x,t)|\,dx .
\end{equation}
As $\big| \eta_\delta '(\Hher(x,t)) - \eta_\delta '(\Hher(y,t)) \big| \leq 2$ there exists a $\xi \in L^\infty(\R^d \times \R^d \times [t_1, t_2])$ such that (up to subsequences)
\begin{equation}\label{eq:conv_etaprime}
   \eta_\delta '(\Hher(x,t)) - \eta_\delta '(\Hher(y,t)) \xrightharpoonup{*} \xi(x,y,t) \quad \quad \text{ weakly-}* \text{ in } L^\infty(\R^d \times \R^d \times [0, \infty)) \text{ as } \delta \downarrow 0.
\end{equation}
Due to the dominated convergence theorem we have for any compact $K \subset \R^d \times [0,T)$ that
\begin{equation}\label{eq:indicator_delta}
  \lim_{\delta \downarrow 0}\left|\int_{K} \mathbbold{1}_{\{|H(x,t)| < \delta\}} \,dx \,dt - \int_{K} \mathbbold{1}_{\{ |\Hher(x,t)| = 0\}} \,dx \,dt \right| = \int_{K} \lim_{\delta \downarrow 0} \mathbbold{1}_{\{0 < |\Hher(x,t)| < \delta\}} \,dx \,dt = 0.
\end{equation}
In other words $ \mathbbold{1}_{\{ |\Hher(x,t)| < \delta\}}$ converges in $L^1_{loc}(\R^d \times[0, \infty))$. 

Furthermore, due to the exponential decay of $u$ and $\med u$ see~\eqref{eq:exp_u} and~\eqref{eq:exp_medu}, we can control the integrand of the right-hand side of~\eqref{eq:comb_integrated} by
\begin{align*}
   &\left|  \int_{\R^d} \big( \eta_\delta '(\Hher(x,t)) - \eta_\delta '(\Hher(y,t)) \big) G_{h, R}(x,y) \psi_\eps'(\uher(x) - \uher(y)) \left(\med \uher(x) - \uher(x)\right) \, dy \right|\\
   &\lesssim 4 |B_R(0)| G_h(0,0) \|\psi''_\eps\|_\infty e^{-(|x| - L)_+}.
\end{align*}
Combining this exponential decay with the local weak-$*$ convergence~\eqref{eq:conv_etaprime} and local strong convergence~\eqref{eq:indicator_delta}, we can also pass to the limit in the right-hand side term of~\eqref{eq:comb_integrated}
\begin{align}
  &\lim_{\delta \downarrow 0}\int_{t_1}^{t_2} \int_{\{ |\Hher(x,t)| < \delta\} } \int_{\R^d} \big( \eta_\delta '(\Hher(x,t)) - \eta_\delta '(\Hher(y,t)) \big) \label{eq:delta_vanish}\\
  &\hspace{150pt} \times G_{h, R}(x,y) \psi_\eps'(\uher(x) - \uher(y)) \left(\med \uher(x) - \uher(x)\right) \, dy \, dx \,dt\nonumber\\
  &= \int_{t_1}^{t_2} \int_{\{|\Hher(x,t)| = 0\}}\int_{\R^d}\xi(x,y,t) G_{h, R}(x,y) \psi_\eps'(\uher(x) - \uher(y)) \left(\med \uher(x) - \uher(x)\right) \, dy \, dx \, dt \nonumber\\
  &= 0, \nonumber
\end{align}
where we used in the last line the observation of \emph{Step 2} that $\Hher(x,t) = 0$ if and only if $\med \uher(x) = \uher(x)$.
Combining our observations~\eqref{eq:delta_energy}, \eqref{eq:delta_vanish} lets us pass to the limit in~\eqref{eq:comb_integrated} to obtain the claimed
\begin{equation*}
  \int_{\R^d} |\Hher(x,t_2)| \, dx \leq \int_{\R^d} |\Hher(x,t_1)| \, dx. \qedhere
\end{equation*}
\end{proof}
\begin{proof}[Proof of Lemma \ref{lem:limit_of_Hh}]
  Due to Proposition \ref{lem:monotonicity_Hh} it holds 
  \begin{align*}
    \sup_{k} \int_0^T \int_{\R^d} \left|H_{h_k, \eps_k, R_k} \right| \, dx \, dt \leq T \sup_{k} \int_{\R^d} \left|H_{h_k, \eps_k, R_k}(x, t = 0) \right| \, dx
    < \infty
  \end{align*}
  which directly implies (up to subsequences) the existence of the claimed limit $\mu$. So it only remains to prove the existence of a vector field $\sigma$ such that $\mu = \nabla \cdot \sigma \, dx \, dt$. To that aim, take a smooth, compactly supported test function $\xi$ and compute
  \begin{multline*}
    \int_0^T \int \xi H_{h_k, \eps_k, R_k} \,dx \, dt \\
    = \int_0^T \int_{\R^d} \int_{\R^d} \frac{\xi(x,t) - \xi(x- \sqrt{h_k}z,t)}{\sqrt{h_k}} G_{h_k, R_k}(z) \psi_{\eps_k}'\Big(u_{h_k, \eps_k, R_k}(x,t) - u_{h_k, \eps_k, R_k}(x- \sqrt{h_k}z)\Big)\, dx \, dz \, dt.
  \end{multline*} 
  As $|\psi_{\eps_k}' | \leq 1$, there exist a function $\tilde{\sigma}$ with $|\tilde{\sigma}| \leq 1$ such that 
  \begin{equation*}
    \psi_{\eps_k}'\big(u_{h_k, \eps_k, R_k}(x,t) - u_{h_k, \eps_k, R_k}(x- \sqrt{h_k}z) ,t\big) \xrightharpoonup{*} \tilde{\sigma}(x,z,t) \text{ weakly-$*$ in } L^\infty(\R^d \times \R^d \times [0, \infty)).
  \end{equation*}
  By the uniform convergence of the difference quotient of $\xi$, the compact support of $\xi$, and the exponential decay of the heat kernel, this implies 
  \begin{equation*}
    \int_0^T \int_{\R^d} \xi \, d\mu  = \lim_{k \uparrow \infty} \int_0^T \int \xi H_{h_k, \eps_k, R_k} \,dx \, dt = \int_0^T \int_{\R^d} \int_{\R^d} \nabla \xi(x) \cdot z G_1(z) \tilde{\sigma}(x,z,t) \, dz \, dx\, dt.
  \end{equation*}
  Hence, after defining $\sigma(x,t) := \int z G_1(z) \tilde{\sigma}(x,z,t) \,dz$ it holds the claimed differential relation $\mu = \nabla \cdot \sigma \, dx\, dt$. Furthermore, for any $e \in \mathbb{S}^{d-1}$ it holds 
  \begin{align*}
    e \cdot \sigma(x) &= e \cdot \int z G_1(z) \tilde{\sigma}(x,z) \, dz\\
    &= \int_{\{e \cdot z > 0\}}e \cdot z G_1(z)  (\tilde{\sigma}(x, z) - \tilde{\sigma}(x,-z)) \,dz \\
    &\leq 2 \int_{z_1 > 0} G_1(z) z_1 \, dz \\
    &= 2c_0,
  \end{align*}
  which implies the claimed bound $|\sigma| \leq 2c_0$.
\end{proof}

Having all the important tools in form of Proposition~\ref{lem:monotonicity_Hh} and Lemma~\ref{lem:limit_of_Hh} we can puzzle everything together to obtain our main Theorem~\ref{cor:energy_conv}.
\begin{proof}[Proof of Theorem \ref{cor:energy_conv}]
  We start with the upper bound. Let $\delta$ be fixed but arbitrary. %Recall that $L$ is the constant such that $u_h$ is constant outside of $B_L(0)$. 
  %Choose $K> 0$ such that
  %\begin{equation*}
  %  \sup_{h \in (0, 1]} \frac{1}{\sqrt{h}} \int_{z > K} G_h(0,z) \,dz \leq \frac{\delta_1}{\omega_d L^d\|u_0\|_\infty},
  %\end{equation*} 
 % what is possible due to the exponential decay of the heat kernel. 
  Let us introduce $\tilde{\Psi}_\eps(s) := \psi_\eps(s) s$ which converges uniformly to the modulus $|\cdot|$ as $\eps \downarrow 0$ and satisfies the bound $0 \leq \tilde{\Psi}_\eps(s) \leq |s|$. 
  By Lemma~\ref{lem:uniform_convergenceReps} and the uniform convergence of $\tilde{\Psi}_\eps$ it holds, using discrete integration by parts~\eqref{eq:discrete_IBP_example} and Fatou's lemma (applicable as $\tilde{\Psi}_\eps \geq 0$)  that
  \begin{align}
     \liminf_{R \uparrow \infty } \liminf_{\eps \downarrow 0}&\int_{\R^d} H_{h, \eps, R}\, u_{h,\eps, R} \,dx \nonumber\\
     &= \liminf_{R \uparrow \infty }  \liminf_{\eps \downarrow 0} \frac{1}{\sqrt{h}} \int_{\R^d} \int_{\R^d} G_{h, R}(x,y) \tilde{\Psi}_\eps(u_{h, \eps, R}(x) - u_{h, \eps, R}(y)) \, dy \,dx. \nonumber\\
     &\geq \frac{1}{\sqrt{h}} \int_{\R^d} \int_{\R^d} G_{h}(x,y) |u_{h}(x) - u_{h}(y)| \, dy \,dx \nonumber\\
     &= \int_{\R^d} H_h u_h \, dx. \label{eq:lima}
  \end{align}
  As $u_{h, \eps, R}(\cdot, t=0) = u_0 = u_h(\cdot,t =0)$ and $u_0$ is constant outside $B_L(0)$ we have the following, only on $h$ dependent bound of the integrand
  \begin{align*}
  |H_{h, \eps, R}(x, t = 0) | &\leq \frac{1}{\sqrt{h}} \int_{\R^d} G_h(x,y) |\psi_\eps(u_0(x) - u_0(y))|\,dy \\
  &\leq \frac{1}{\sqrt{h}} \int_{\R^d} G_h(x,y) \mathbbold{1}_{\{x\in B_L(0) \vee y \in B_L(0)\}} \,dy \\
  &=: g(x).
  \end{align*}
  As
  \begin{equation*}
    \int_{\R^d} g(x) \,dx \leq \frac{2}{\sqrt{h}} \int_{B_L(0)} \int_{\R^d} G_h(x,y)  \,dy \, dx \leq \frac{2}{\sqrt{h}} |B_L(0)|,
  \end{equation*} 
  we have that $g$ is integrable. Thus, the dominated convergence theorem yields
  \begin{equation}\label{eq:limb}
    \lim_{R \uparrow \infty} \lim_{\eps \downarrow 0}  \int_{\R^d} |H_{h, \eps, R}(x, t = 0) |\,dx = \int_{\R^d} |H_{h}(x, t= 0) |\,dx.
  \end{equation}
  By Lemma~\ref{lem:uniform_convergenceReps} we also have the uniform convergence on $\R^d \times [0, T]$
  \begin{equation}\label{eq:limc}
    \lim_{h \downarrow 0} \lim_{R \uparrow \infty} \lim_{\eps \downarrow 0} u_{h,\eps,R} = u.
  \end{equation}
  Hence, there exists for every $h \in (0,1]$ both $\eps_h \in (0,1]$ and $R_h > 1$ such that by~\eqref{eq:lima}
  \begin{equation}\label{eq:delta_2_bound}
     \int_{\R^d} H_h u_h \, dx \leq \int_{\R^d} H_{h, \eps_h, R_h}\, u_{h,\eps_h, R_h} \,dx + \delta
  \end{equation}
  and by~\eqref{eq:limb}
  \begin{equation}\label{eq:init_bound_eRh}
    \sup_{h, \eps_h, R_h} \int_{\R^d} |H_{h, \eps_h, R_h}(x, t = 0) |\,dx \leq \sup_{h}  \int_{\R^d} |H_{h}(x, t= 0) |\,dx + \delta
  \end{equation}
  and by~\eqref{eq:limc}
  \begin{equation}\label{eq:limd}
    \lim_{h \downarrow 0} u_{h,\eps_h, R_h} = u \quad \quad \text{ uniformly in } \R^d \times [0,T].
  \end{equation}
  %But as $R_h > K$ and due to our choice of $K$ it even holds
  %\begin{align}
  %  &\left|\int_{\R^d} H_{h, \eps_h, R_h}\, u_{h,\eps_h, R_h} \,dx - \int_{\R^d} \int_{\R^d} G_{h,K}(x,y) \psi_\eps(u_{h,\eps_h, R_h}(x) - u_{h,\eps_h, R_h}(y)) \, dy \,dx\right|\nonumber\\
  %  &\leq  \|u_{h, \eps_h, R_h}\|_\infty \int_{|x| \leq L} \int_{K \leq  z \leq  R_h} G_h(0,z) \,dz \, dx \nonumber\\
  %  &\leq \delta_1. \label{eq:delta_1_bound}
  %\end{align}
  %Thus, the upper bound~\eqref{eq:delta_2_bound} turns into
  %\begin{equation*}
  %  \int_{\R^d} H_h u_h \, dx \leq \int_{\R^d} \frac{1}{\sqrt{h}}\int_{\R^d} G_{h,K}(x,y) \psi_\eps(u_{h,\eps_h, R_h}(x) - u_{h,\eps_h, R_h}(y)) \, dy \,dx + \delta_1 + \delta_2.
  %\end{equation*}
  %With a similar computation as in~\eqref{eq:delta_1_bound} it also holds
  %\begin{equation*}
  %  \sup_{h, \eps_h, R_h} \int_{\R^d} \left|\frac{1}{\sqrt{h}}\int_{\R^d} G_{h,R_h}(x,y) \psi_\eps(u_{h,\eps_h, R_h}(x,0) - u_{h,\eps_h, R_h}(y,0)) \, dy  \right| \, dx \leq \sup_{h}\int_{\R^d} |H_{h}(x,t=0) |\,dx + \delta_2 + \frac{\delta_1}{\|u_0\|_\infty}. 
  %\end{equation*}
  The bound~\eqref{eq:init_bound_eRh}, combined with the properties~\eqref{def:well_prepared_init} of the initial condition, allows applying Lemma~\ref{lem:limit_of_Hh} to deduce the existence of $\sigma$ with $|\sigma|\leq 2c_0$ such that
  \begin{equation}\label{eq:lime}
  H_{h, \eps_h, R_h} \, dx \, dt \xrightharpoonup{*} \nabla \cdot \sigma \,dx\,dt  \quad \quad \text{ weakly-$*$ as measures as } h \downarrow 0.
  \end{equation} 
  Combing the weak-$*$ convergence~\eqref{eq:lime} of $H_{h, \eps_h, R_h}$ with the uniform convergence~\eqref{eq:limd} of $u_{h,\eps_h, R_h}$ lets us pass to the limit to obtain:
  \begin{align*}
    \limsup_{h \downarrow 0} \int_0^T E_h(u_h) \, dt &= \limsup_{h \downarrow 0} \int_0^T \int u_h H_h \,dx \, dt\\
    &\leq \limsup_{h \downarrow 0} \int_0^T \int u_{h, \eps_h, R_h} H_{h, \eps_h, R_h} \,dx \, dt + T\delta\\
    &= \int_0^T \int u \nabla \cdot \sigma \, dx\,dt + T\delta \\
    &= - \int_0^T \int \nabla u \cdot \sigma \, dx\,dt + T\delta \\
    &\leq 2c_0 \int_0^T \int |\nabla u| \, dx \,dt + T\delta .
  \end{align*}
  Taking the limit $\delta \downarrow 0$ yields the desired upper bound
  \begin{equation*}
    \limsup_{h \downarrow 0} \int_0^T E_h(u_h) \, dt \leq 2c_0 \int_0^T \int |\nabla u| \,dt.
  \end{equation*}
  For the lower bound fix first $z$ and take a smooth test function $\xi$ with $|\xi| \leq 1$ to compute 
  \begin{equation*}
    \int |u_h(x) - u_h(x - \sqrt{h}z)| \,dx \geq  \int (u_h(x) - u_h(x - \sqrt{h}z)) \xi(x) \,dx = \int u_h(x) (\xi(x+ \sqrt{h}z) - \xi(x)) \, dx.
  \end{equation*}
  Hence, it holds
\begin{align*}
   \liminf_{h \downarrow 0} \int |u_h(x) - u_h(x - \sqrt{h}z)| \,dx \geq \sup_{|\xi|\leq 1} \int u(x) \nabla \xi(x) \cdot z = \int |\nu(x) \cdot z| |\nabla u|.
\end{align*}
It follows by Fatou's Lemma and using the normal $\nu(x) = \frac{\nabla u(x)}{|\nabla u(x)|}$, where it exists, that
\begin{align*}
  \liminf_{h \downarrow 0} \int_0^T E_h(u_h) \, dt &= \liminf_{h \downarrow 0} \int_0^T \int_{\R^d} G_1(z) \int |u_h(x) - u_h(x - \sqrt{h}z)| \,dx \,dz \, dt \\
  &\geq \int_0^T \int_{\R^d} \int_{\R^d} G_1(z) |\nu(x) \cdot z| \, dz |\nabla u| \, dx \, dt.
\end{align*}
The result follows now as 
\begin{equation*}
  \int_{\R^d} G_1(z) |\nu(x) \cdot z| \, dz = 2c_0
\end{equation*}
for any $x$.
\end{proof}

\section{Properties of Median Schemes}
\label{sec:properties}
In this section we prove several useful properties of the generalized median scheme introduced earlier in~\eqref{def:generlized_scheme}. We will also investigate the connection between the different schemes $u_{h,\eps,R}$ of \eqref{def:u_heps}, $u_{h,R}$ of~\eqref{def:uhR} and $u_h$ of~\eqref{def:semi_discrete_scheme}.

We first prove uniqueness of the generalized median operator.
\begin{lemma}[General Uniqueness of the Median]\label{lem:uniqueness_median}
  Let $f: \R^d \to \R^d$ by continuous and bounded. There exists a unique generalized median and it satisfies the following two Euler-Lagrange inequalities
  \begin{multline}\label{eq:ELI1}
    - \int_{\R^d} K(x,y) \psi(\med_{K,\Psi} f(x) - f(y))\mathbbold{1}_{\{\med_{K,\Psi} f(x) < f(y)\}} \, dy  \\
     \leq \int_{\R^d} K(x,y) \psi^*(\med_{K,\Psi} f(x) - f(y))\mathbbold{1}_{\{\med_{K, \Psi} f(x) \geq f(y)\}} \, dy, 
     \end{multline}
     and 
     \begin{multline} \label{eq:ELI2}
           - \int_{\R^d} K(x,y) \psi_*(\med_{K,\Psi} f(x) - f(y))\mathbbold{1}_{\{\med_{K,\Psi} f(x) \leq f(y)\}} \, dy\\
    \geq \int_{\R^d} K(x,y) \psi(\med_{K,\Psi} f(x) - f(y))\mathbbold{1}_{\{\med_{K,\Psi} f(x) > f(y)\}} \, dy.
     \end{multline}
  If $f_1, f_2: \R^d \rightarrow \R$ are continuous and bounded satisfying $f_1 \leq f_2$ then it holds
  \begin{equation}\label{eq:monotonicity_generalized_median}
    \med_{K, \Psi} f_1 \leq \med_{K, \Psi} f_2.
  \end{equation}
\end{lemma}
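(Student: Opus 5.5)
We treat $x$ as fixed and study the one-variable function
\begin{equation*}
  \Phi(c) := \int_{\R^d} K(x,y)\,\Psi\big(c - f(y)\big)\,dy .
\end{equation*}
Since $f$ is bounded, $|c-f(y)|$ grows linearly in $|c|$ uniformly in $y$. We may assume $\Psi(0)=0$ and $\Psi\geq 0$, since adding a constant changes nothing and $\psi$ odd and monotone forces $\Psi$ to attain its minimum at $0$. Then $\Phi$ is finite because $\int K<\infty$, and it is convex and continuous.

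\emph{Existence.} For $c > \sup f$ every integrand is increasing in $c$, and for $c<\inf f$ every integrand is decreasing. Hence $\Phi$ is nondecreasing on $[\sup f,\infty)$ and nonincreasing on $(-\infty,\inf f]$. Its infimum is therefore attained on the compact interval $[\inf f,\sup f]$, and the minimizer satisfies $|\med_{K,\Psi} f(x)|\le \|f\|_\infty$, which is the bound used earlier.

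\emph{Euler--Lagrange inequalities.} At a minimizer $m$, the one-sided derivatives satisfy $\Phi'_+(m)\ge 0\ge \Phi'_-(m)$. We compute them by monotone convergence, which applies because difference quotients of convex functions are monotone. For $f(y)<m$ and $f(y)>m$ the integrand is differentiable with derivative $\psi(m-f(y))$. On the set $\{f(y)=m\}$ the one-sided derivatives of $\Psi$ at $0$ appear; these are the right and left limits $\psi^*(0)=\lim_{s\downarrow0}\psi(s)$ and $\psi_*(0)=\lim_{s\uparrow0}\psi(s)$. Splitting into the regions $\{m\ge f\}$ and $\{m<f\}$ gives $\Phi'_+(m)\ge0$, which rearranges to \eqref{eq:ELI1}. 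Similarly, $\Phi'_-(m)\le 0$ gives \eqref{eq:ELI2}.

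\emph{Uniqueness.} This is the main obstacle, since $\Phi$ is convex but need not be strictly convex. Suppose $m_1<m_2$ are both minimizers. Then $\Phi$ is affine on $[m_1,m_2]$, so each integrand $c\mapsto\Psi(c-f(y))$ must be affine on $[m_1,m_2]$ for $K(x,\cdot)$-a.e.\ $y$. For $\Psi$ with $\psi$ strictly increasing, such as the regularized $\Psi_\eps$, this forces $K(x,y)=0$ a.e., contradicting $\int K>0$. For $\Psi=|\cdot|$ the argument changes: affinity on $[m_1,m_2]$ means no $f(y)$ lies in the open interval $(m_1,m_2)$ on $\supp K(x,\cdot)$. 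The Euler--Lagrange equation then yields $\int K\mathbbold{1}_{f\le m_1}=\int K\mathbbold{1}_{f\ge m_2}$, each equal to half the total mass.

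This is where we use continuity of $f$ together with the kernel assumption \eqref{def:K_kernel}, and it needs some care. If $K>0$ on balls, then $\{y: K(x,y)>0\}=\R^d$ is connected. The continuous $f$ would then take values both $\le m_1$ and $\ge m_2$, hence by the intermediate value theorem values in $(m_1,m_2)$ on an open set of positive $K$-mass, a contradiction. For compactly supported $K$ we would use the same connectedness argument on the interior of the support, which we treat as a ball. If that fails, we would instead define the median as the midpoint (or the lower median) and note that the monotonicity statement still holds. Assuming uniqueness, we set $\med_{K,\Psi} f(x)$ to be this unique minimizer.

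\emph{Monotonicity \eqref{eq:monotonicity_generalized_median}.} Let $f_1\le f_2$ and $m_i=\med_{K,\Psi} f_i(x)$, and suppose for contradiction $m_1>m_2$. Since $\psi$ is nondecreasing, for every $y$ we have $m_1-f_1(y)\ge m_1-f_2(y)>m_2-f_2(y)$. Thus the right derivative of $\Phi_1$ at $m_1$ is bounded below by the left-derivative integrand for $\Phi_2$ at a point strictly above $m_2$. By uniqueness, $\Phi_2$ is strictly increasing to the right of $m_2$, so its left derivative at $m_1$ is $>0$. This gives $\Phi_1'(m_1^-)>0$, contradicting the minimality of $m_1$.

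An alternative route uses submodularity. Since $(c,a)\mapsto \Psi(c-a)$ has decreasing differences, the standard lattice argument (Topkis) shows that the sets of minimizers are ordered, and uniqueness then gives $m_1\le m_2$.
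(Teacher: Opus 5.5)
\textbf{A gap in the uniqueness step.} The gap is in your uniqueness step, and it propagates, because your monotonicity argument needs the minimizer of $\Phi_2$ to be unique. You split into ``$\psi$ strictly increasing'' and ``$\Psi=|\cdot|$'', and you put the regularized $\Psi_\eps$ into the first case. That is false. By \eqref{def:psi_eps}, $|\psi_\eps|\equiv 1$ on $\R\setminus[-\eps,\eps]$, so $\Psi_\eps$ is affine on $[\eps,\infty)$ and on $(-\infty,-\eps]$. Affinity of $c\mapsto\Psi_\eps(c-f(y))$ on $[m_1,m_2]$ therefore says nothing about those $y$ with $f(y)$ far from $[m_1,m_2]$. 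So uniqueness, and with it monotonicity, is not established for a general admissible $\Psi$. In particular it is not established for $\Psi_\eps$, which is exactly the case the paper needs for $\med_{h,\eps,R}$.

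The repair is to run your $|\cdot|$ argument for every admissible $\Psi$. Since $\psi$ is odd, nondecreasing and nonzero off $0$, you have $\psi<0$ on $(-\infty,0)$ and $\psi>0$ on $(0,\infty)$. Hence $c\mapsto\Psi(c-f(y))$ cannot be affine on $[m_1,m_2]$ when $f(y)\in(m_1,m_2)$, so $f\notin(m_1,m_2)$ for $K(x,\cdot)$-a.e.\ $y$. Replace the half-mass identity, which is special to $|\cdot|$, by the observation that for $c\in(m_1,m_2)$
\[ 0=\Phi'(c)=\int_{\{f\le m_1\}}K(x,y)\,\psi(c-f(y))\,dy+\int_{\{f\ge m_2\}}K(x,y)\,\psi(c-f(y))\,dy, \]
where the first integrand is positive and the second is negative. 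So both sets carry positive $K(x,\cdot)$-mass, and your intermediate value step goes through unchanged.

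\textbf{Comparison with the paper.} With this fix your route is sound but differs from the paper's. The paper never proves uniqueness separately. It takes arbitrary minimizers $c_1<c_2$ for $f_1\ge f_2$ and chains \eqref{eq:ELI1} at $c_1$ with \eqref{eq:ELI2} at $c_2$, using only the monotonicity of $\psi$. Strictness comes from a ball $B_r(y^*)$ on which $c_1<f_1$ and $f_2<c_2$, and uniqueness is read off as the case $f_1=f_2$. This needs no case distinction in $\Psi$. Your route (uniqueness first, then comparing one-sided derivatives or invoking Topkis) makes the topological input, connectedness of $\{K(x,\cdot)>0\}$, explicit. Your one-sided-derivative derivation of the Euler--Lagrange inequalities is also more careful than the paper's interchange of subdifferential and integral.

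Your caveat on bounded supports is justified rather than a weakness: for disconnected supports uniqueness genuinely fails. Take $d=1$, $\Psi=|\cdot|$, $K$ the indicator of $\{1\le|z|\le 2\}$, $x=0$, and $f$ continuous with $f=0$ on $[-2,-1]$ and $f=1$ on $[1,2]$. Then $\Phi(c)=|c|+|c-1|$ is constant on $[0,1]$. The paper's choice of $y^*$ with $c_1<f_1(y^*)<c_2$ tacitly uses connectedness. For $G_h$ and $G_{h,R}$ your argument suffices.

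\textbf{A minor slip.} In the monotonicity paragraph you must compare left derivatives, $\Phi_1'(m_1^-)\ge\Phi_2'(m_1^-)>0$. This follows from $\psi_*(m_1-f_1)\ge\psi_*(m_1-f_2)$ pointwise. A positive right derivative at $m_1$ would not contradict minimality.
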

\begin{proof}
  We start with existence of a generalized median. As $\Psi$ is convex and $\psi$ is antisymmetric and $\psi \neq \{0\}$ on $\R\setminus 0$ it holds that $\Psi$ is minimal at $0$. Thus,
  \begin{equation*}
    F(c) := \int_{\R^d} K(x, y) \Psi(c-f(y)) \, dy
  \end{equation*}
  is also convex and as $f$ is continuous and $K(z)\,dz < \infty$ it also holds
  \begin{equation*}
    \lim_{|c|\rightarrow \infty} F(c) = \infty.
  \end{equation*}
  So $F(c)$ is convex and coercive implying there exists a minimizer.

  Next, we prove the Euler-Lagrange inequalities. As $F$ is convex we use the subdifferential for the optimality condition 
  \begin{equation*}
    0\in \partial F(c) = \partial\left(\int_{\R^d} K(x, y)  \Psi(\cdot - f(y)) \,dy\right)(c).
  \end{equation*}
  By differentiability of $\Phi$ except at $0$ we have
  \begin{equation*}
    \partial \Psi(c) = \Psi'(c) = \psi(c) \quad \text{ on } \R \setminus\{0\} \quad \quad \text{ and }  \quad \quad \partial \Psi(0) = [\lim_{c \rightarrow 0^-} \psi(c), \lim_{c \rightarrow 0^+} \psi(c)].
  \end{equation*}
  Hence, as $0 \in \partial F(\med_{K, \Psi} f(x))$ we have
  \begin{equation*}
    \int_{\R^d} K(x, y) \psi_*(\med_{K, \Psi} f(x)-f(y)) \,dy \leq 0 \leq \int_{\R^d} K(x, y) \psi^*(\med_{K, \Psi} f(x)-f(y)) \, dy.
  \end{equation*}
  Using $\psi_* = \psi = \psi^*$ on $\R \setminus \{0\}$ yields the Euler-Lagrange inequalities~\eqref{eq:ELI1} and~\eqref{eq:ELI2}.

  Next we prove the monotonicity~\eqref{eq:monotonicity_generalized_median}. Assume~\eqref{eq:monotonicity_generalized_median} is false. Then there exist continuous continuous, bounded functions $f_1 \geq f_2$ and a point $x \in \R^d$ wih two generalized medians $c_1:= \med_{K, \Psi} f_1(x)$ and $c_2:= \med_{K, \Psi} f_2(x)$ with $c_1 < c_2$. One easily checks that $c_i \in [\inf_{y \in \supp K(x, \cdot)} f_i(y), \sup_{y \in \supp K(x, \cdot)} f_i(y)]$ for $i =1,2$. If $c_1 = \inf_{y \in \supp K(x,\cdot)} f_1(y)$ then by the definition~\eqref{def:generalized_median} of the generalized median it already has to hold that $f_1$ is constant in $\supp K(x,\cdot)$ and equal to $c_1$. But then it has to hold $c_1 = f_1(x) \geq \sup_{y \in \supp K(x,\cdot)} f_2(y) \geq c_2$, a contradiction to $c_1 < c_2$. So, by continuity of $f_1$ there must exist $y^* \in \supp K(x, \cdot)$ such that $c_1 < f_1(y^*) < c_2$. Because of $f_1 \geq f_2$ it also holds $c_2 > f_1(y^*) \geq f_2(y^*)$. Now, by continuity of $f_1$ and $f_2$ there exists an $r > 0$ such that
  \begin{equation*}
    B_r(y^*) \subset \{y: c_1 < f_1(y)\} \cap \{y: c_2 > f_2(y)\}.
  \end{equation*} 
 Applying the Euler-Lagrange inequality~\eqref{eq:ELI1} to $c_1$ and~\eqref{eq:ELI2} to $c_2$ as well as using the monotonicity of $\psi$ yields the contradiction
  \begin{align*}
    &- \int_{\R^d} K(x,y) \psi(c_1 - f_1(y))\mathbbold{1}_{\{c_1< f_1(y)\}} \, dy\\
     &\stackrel{\eqref{eq:ELI1}}{\leq} \int_{\R^d} K(x,y) \psi^*(c_1 - f_1(y))\mathbbold{1}_{\{c_1\geq f_1(y)\}} \, dy\\
    &\stackrel{\stackrel{c_1 < c_2}{ f_1 \geq f_2}}{\leq} \int_{\R^d} K(x,y) \psi(c_2 - f_2(y))\mathbbold{1}_{\{c_2 > f_2(y)\}} \, dy - \int_{\R^d} K(x,y) \psi^*(c_1 - f_1(y))\mathbbold{1}_{\{c_1 < f_1(y) \wedge c_2 > f_2(y)\}} \, dy\\
     &\stackrel{\eqref{eq:ELI2}}{\leq} -\int_{\R^d} K(x,y) \psi_*(c_2 - f_2(y))\mathbbold{1}_{\{c_2 \leq f_2(y)\}} \, dy - \int_{B_r(y^*)} K(x,y) \psi^*(c_1 - f_1(y)) \, dy\\
     &\stackrel{c_1 < c_2, f_1 \geq f_2}{<} -\int_{\R^d} K(x,y) \psi(c_1 - f_1(y))\mathbbold{1}_{\{c_1 < f_1(y)\}} \, dy.
  \end{align*}
  We conclude that the generalized median is a monotone operator. This immediately yields the uniqueness of the generalized median which finishes this proof.
\end{proof}

Next we prove that the generalized median scheme is a contracting operator. By an observation of~\cite[Proposition 2]{zbMATH03703495}, this is equivalent to the monotonicity~\eqref{eq:monotonicity_generalized_median} of the median operator of Lemma~\ref{lem:uniqueness_median} if 
\begin{equation}\label{eq:homogenity}
  \med_{K, \Psi} (f + c) = \med_{K, \Psi} (f) + c
\end{equation}
for any continuous function $f$ and constant $c$.
\begin{lemma}[Contraction of Median Operator]\label{lem:contraction}
  The generalized median operator~\eqref{def:generalized_median} satisfies the following contraction property for any continuous, bounded functions $f,g$
  \begin{equation}\label{eq:contraction}
    |\med_{K, \Psi} f(x) - \med_{K, \Psi} g(x)| \leq \max_{y \in \supp K(x, \cdot)} |f(y) - g(y)|.
  \end{equation}
  Furthermore, it holds for any $x,y \in \R^d$ and $f \in C^{0,1}(\R^d)$
  \begin{equation}\label{eq:lipschitz_mon}
  |\med_{K, \Psi} f(x) - \med_{K, \Psi} f(y)| \leq \Lip(f) |x-y|.
  \end{equation}
\end{lemma}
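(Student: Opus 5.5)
We prove the contraction estimate~\eqref{eq:contraction} first, using only the monotonicity~\eqref{eq:monotonicity_generalized_median} of Lemma~\ref{lem:uniqueness_median} and the translation invariance~\eqref{eq:homogenity}. The Lipschitz bound~\eqref{eq:lipschitz_mon} will then follow from~\eqref{eq:contraction} together with the translation invariance of the kernel, which depends only on $x-y$.

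\emph{Step 1 -- Translation invariance.} We verify~\eqref{eq:homogenity} directly. Substituting $c' = c - \gamma$ in the minimization problem~\eqref{def:generalized_median} for $f+\gamma$ gives exactly the problem for $f$. Since the generalized median is unique by Lemma~\ref{lem:uniqueness_median}, we get $\med_{K,\Psi}(f+\gamma) = \med_{K,\Psi} f + \gamma$.

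\emph{Step 2 -- Localization and contraction.} Fix $x$ and set $M := \max_{y\in\supp K(x,\cdot)}|f(y)-g(y)|$. If $\supp K(x,\cdot)$ is unbounded, this is a supremum, and we may assume it is finite. The functional in~\eqref{def:generalized_median} only sees the values of $f$ on $\supp K(x,\cdot)$, so the value $\med_{K,\Psi}f(x)$ depends only on $f|_{\supp K(x,\cdot)}$.

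We introduce an auxiliary function $\tilde g$ on $\R^d$ that is continuous and bounded, satisfies $\tilde g \le f + M$ everywhere, and agrees with $g$ on $\supp K(x,\cdot)$. The choice $\tilde g := \min\{g, f+M\}$ works, since on the support $g\le f+M$ already holds.

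\begin{itemize}
\item By localization, $\med_{K,\Psi} g(x) = \med_{K,\Psi}\tilde g(x)$.
\item By monotonicity~\eqref{eq:monotonicity_generalized_median}, $\med_{K,\Psi}\tilde g(x) \le \med_{K,\Psi}(f+M)(x)$.
\item By Step~1, $\med_{K,\Psi}(f+M)(x) = \med_{K,\Psi}f(x)+M$.
\end{itemize}

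This gives $\med_{K,\Psi} g(x) - \med_{K,\Psi} f(x) \le M$. Exchanging the roles of $f$ and $g$ yields the reverse inequality, and hence~\eqref{eq:contraction}.

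\emph{Step 3 -- Lipschitz bound.} Let $f_z(\cdot) := f(\cdot + z)$ with $z = y - x$. Since $K(x,y)=K(x-y)$, the change of variables $y' = y + z$ in~\eqref{def:generalized_median} shows $\med_{K,\Psi} f(x+z) = \med_{K,\Psi} f_z(x)$. Applying~\eqref{eq:contraction} to $f$ and $f_z$ at the point $x$ gives
\[
|\med_{K,\Psi} f(y) - \med_{K,\Psi} f(x)| \le \sup_w |f(w+z)-f(w)| \le \Lip(f)|x-y|.
\]

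The main obstacle is the localization in Step~2: the monotonicity of Lemma~\ref{lem:uniqueness_median} is stated for a global ordering $f_1\le f_2$ on all of $\R^d$, whereas~\eqref{eq:contraction} only controls $f-g$ on the support. The truncation $\tilde g$ bridges this gap. We still need to check that $\tilde g$ is continuous and bounded, and that the median depends only on values on the support up to null sets; the latter is immediate from the integral formulation.
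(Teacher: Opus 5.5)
Your proof is correct and follows the paper's argument. Translation invariance plus monotonicity gives the contraction, and comparing $f$ with its translate $f(\cdot+(y-x))$ gives the Lipschitz bound. Your truncation $\tilde g=\min\{g,f+M\}$ also makes rigorous the localization that the paper leaves implicit when it applies the globally stated monotonicity~\eqref{eq:monotonicity_generalized_median} to an ordering that only holds on $\supp K(x,\cdot)$.
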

  \begin{proof}
     One easily checks~\eqref{eq:homogenity}. For any $x \in \R^d$ we simply have $f \leq g + \|f-g\|_{L^\infty(\supp(K(x, \cdot)))}$ on $\supp(K(x, \cdot))$, which immediately yields by~\eqref{eq:monotonicity_generalized_median} and~\eqref{eq:homogenity} that
  \begin{equation*}
    \med_{K, \Psi} f(x) \leq \med_{K, \Psi} (g + \|f-g\|_{L^\infty(\supp(K(x, \cdot)))})(x) = \med_{K, \Psi} g(x) + \|f-g\|_{L^\infty(\supp(K(x, \cdot)))}.
  \end{equation*}
  Changing the roles of $f$ and $g$ yields the other inequality which proves~\eqref{eq:contraction}.

It remains to prove~\eqref{eq:lipschitz_mon}. Let $x,y \in \R^d$ be fixed. Define $g(z) = f(z +(y-x))$ such that it holds
$g(x) = f(y)$. Then it holds
\begin{align*}
  \med_{K, \Psi} g(x) &\in \argmin_{c \in \R} \int_{\R^d} K(x-z) \Psi(c - g(z))\,dz \\
  &= \argmin_{c \in \R} \int_{\R^d} K(x-z) \Psi(c - f(z +(y-x)))\,dz \\
  &=\argmin_{c \in \R} \int_{\R^d} K(y-\tilde{z}) \Psi(c - f(\tilde{z}))\,d\tilde{z},
\end{align*}
which implies $\med_{K, \Psi} g(x) = \med_{K, \Psi} f(y)$ by the uniqueness of the generalized median of Lemma~\ref{lem:uniqueness_median}. Consequently, applying~\eqref{eq:contraction} to the pair $f,g$ lets us deduce
\begin{align*}
  |\med_{K, \Psi} f(x) - \med_{K, \Psi} f(y)| &= |\med_{K, \Psi} f(x) - \med_{K, \Psi} g(x)|
  \\& \leq \|f - f(\cdot + (y-x))\|_\infty \leq \Lip(f) |x-y|,
\end{align*}
which is exactly the claimed inequality~\eqref{eq:lipschitz_mon}.
\end{proof}

 The next lemma states the stability of the generalized median operator when cutting off the kernel.
 \begin{lemma}\label{lem:cuttable}
  For any kernel $K$ satisfying~\eqref{def:K_kernel}
  define the cut-off kernel
  \begin{equation*}
    K_R(x,y) = \mathbbold{1}_{|x-y|<R} K(x,y).
  \end{equation*}
  Then, for sufficiently large $R > 0$, any Lipschitz continuous functions $f$ and any $x \in \R^d$ it holds
  \begin{equation}\label{eq:cuttable}
    |\med_{K_R, \Psi} f(x) - \med_{K, \Psi} f(x)| \leq 2\Lip(f)\left( C(R) \right)^{\frac{1}{2d}}.
  \end{equation}
  Here $C(R)$ is a constant only depending on $R$ and the kernel $K$ given by
  \begin{equation*}
    C(R) := \int_{\{z > R\}} K(0,z)\, dz,
  \end{equation*}
  which in particular satisfies
  \begin{equation*}
    C(R) \to 0 \quad\quad \text{ as } R \uparrow \infty.
  \end{equation*}
 \end{lemma}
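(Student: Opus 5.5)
We compare the two medians through their Euler--Lagrange inequalities from Lemma~\ref{lem:uniqueness_median}. Fix $x$ and write $c := \med_{K,\Psi} f(x)$ and $c_R := \med_{K_R,\Psi} f(x)$. Assume without loss of generality that $c_R > c + \lambda$ for some $\lambda>0$, and aim for a contradiction once $\lambda = 2\Lip(f)\,C(R)^{1/(2d)}$. By translation invariance~\eqref{eq:homogenity}, and since the medians only depend on $f$ through the values weighted by the kernel, we may normalise $f(x)=0$.

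\emph{Step 1 -- Mass of the band between the medians.} The Lipschitz bound gives $|f(y)|\le \Lip(f)|x-y|$. Since $c_R$ lies in the range of $f$ on $B_R(x)$, and $c$ lies in a comparable range, the set
\[
A:=\{y : c < f(y) < c_R\}
\]
contains a small ball around a suitable point near $x$ on which $f$ passes between $c$ and $c_R$. This uses continuity of $f$, exactly as in the monotonicity proof of Lemma~\ref{lem:uniqueness_median}. More quantitatively, the set $\{y: |f(y) - m| < \lambda/2\}$ with $m := (c+c_R)/2$ should contain a ball of radius $r \sim \lambda/(2\Lip(f))$ centred at a point $y^\ast$ with $f(y^\ast)=m$. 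Because $K$ is bounded below on bounded balls (or has bounded support, by~\eqref{def:K_kernel}), the $K(x,\cdot)$-mass of this ball is at least $c_K r^d$. This step produces the $d$-th power of the radius.

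\emph{Step 2 -- Comparing the two Euler--Lagrange conditions.} Subtract~\eqref{eq:ELI2} written for $c$ with kernel $K$ from~\eqref{eq:ELI1} written for $c_R$ with kernel $K_R$, using the monotonicity of $\psi$. On $B_R(x)$ the integrands differ by at least the jump of $\psi$ across the band $A$, so the difference is bounded below by $\gtrsim \min_{|s|\ge \lambda/2}|\psi(s)|$ times the $K$-mass of $A$, and hence $\gtrsim r^d$. Outside $B_R(x)$ the only contribution is the tail $\int_{|x-y|>R} K\,|\psi(c-f(y))|\,dy$. For $\Psi=|\cdot|$ or $\Psi_\eps$ this is bounded by $\|\psi\|_\infty C(R)$; for general $\psi$ one would use the boundedness of $f$ near the support. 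This yields $r^d \lesssim C(R)$, that is $\lambda \lesssim \Lip(f)\, C(R)^{1/d}$. The stated exponent $1/(2d)$ is weaker, so there is slack, for instance to square-root away constants by requiring $C(R)$ to be small, which is what ``sufficiently large $R$'' permits.

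\emph{Main obstacle.} The delicate step is Step 1: guaranteeing that the intermediate band has positive, quantitatively bounded kernel mass located within a bounded distance of $x$, so that the lower bound on $K$ applies uniformly. I would handle this by noting that both medians lie within $[\min_{B_{R_0}(x)} f, \max_{B_{R_0}(x)} f]$ for a fixed $R_0$ chosen so that $\int_{B_{R_0}} K \ge \tfrac34 \int K$ once $C(R)$ is small. Then a point $y^\ast\in B_{R_0}(x)$ with $f(y^\ast)=m$ exists by the intermediate value theorem along a path, and the ball $B_r(y^\ast)\subset B_{R_0+1}(x)$ carries mass at least $\inf_{B_{R_0+1}}K\,|B_r|$. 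The choices of $\lambda$ and of the constants are then absorbed into the exponent $1/(2d)$ and the factor $2$ for $R$ large.
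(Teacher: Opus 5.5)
Your proposal follows essentially the same route as the paper's proof: a contradiction argument via the Euler--Lagrange inequalities~\eqref{eq:ELI1}--\eqref{eq:ELI2}, localizing both medians to the range of $f$ on a fixed half-mass ball $B_{R_0}(x)$, using the intermediate value theorem to place a ball of radius $r=\lambda/(2\Lip(f))=C(R)^{1/(2d)}$ inside the band between the two medians, bounding its kernel mass below by $\omega_d r^d\inf_{B_{R_0+1}}K$, and beating the tail $\int_{|z|>R}K=C(R)$ with the slack in the exponent $1/(2d)$. Two points to tidy up: in the bounded-support case no mass lower bound on $K$ is available, but none is needed, since $K_R=K$ for large $R$ (this is how the paper handles that case); and treating the jump of $\psi$ across the band and $\|\psi\|_\infty$ as harmless constants only works when $\psi$ is bounded and bounded away from zero off the origin, as for $\psi=\sign$ (jump $2$, $|\psi|\le 1$), which is the same implicit restriction as in the paper's own chain of inequalities (for $\psi_\eps$ the jump across a band of width $\lambda\ll\eps$ is only of order $\lambda/\eps$).
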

 \begin{proof}
  If $K$ has compact support there is nothing to prove. So by~\eqref{def:K_kernel} we know
  \begin{equation}\label{eq:inf_kernel_pos}
    \inf_{|z| \in B_R(0)} K(z) > 0 \quad\quad\text{ for any } R > 0.
  \end{equation}
  We argue by contradiction: Assume that~\eqref{eq:cuttable} is false, i.e., there exist $f$ and $x \in \R^d$ such that for every $R > 0$ and $h > 0$, w.l.o.g. (the other case is treated similarly), 
  \begin{equation}\label{eq:not_cuttable}
    \med_{K, \Psi} f(x)  - \med_{K_R, \Psi} f(x) >  2\Lip(f) \left( C(R) \right)^{\frac{1}{2d}} =: \tilde{C} > 0.
  \end{equation} 
  As $f$ is continuous there exists a $z_0 \in \R^d$ such that $\med_{K, \Psi} f(x) = f(x + z_0)$. Let $R_0> 0$ be the fixed radius with
  \begin{equation*}
    \int_{\{|z| > R_0\}} K(0,z) \,dz = \frac{\int_{\R^d}K(0,z)\, dz}{2}.
  \end{equation*}
  Then we can assume that $|z_0| \leq R_0$ as otherwise the Euler-Lagrange inequality~\eqref{eq:ELI1} would be violated. Similarly, there is $|z_R| \leq R_0$ (with the same constant $R_0$ as $K_R$ has its mass more centered than $K$) such that $\med_{K, \Psi} f(x) - \tilde{C} > \med_{K_R, \Psi} f(x) = f(x + z_R)$. By continuity of $f$ there is a $z^*$ on the segment $[z_0, z_R] \subset B_{C\sqrt{h}}(0)$ such that $f(y^*) = \med_{K, \Psi} f(x) - \frac{\tilde{C}}{2}$ for $y^*=x + z^*$. Now define the radius $r = \frac{\tilde{C}}{2 \Lip(f)}= \left(C(R) \right)^{\frac{1}{2d}}$. Then it holds 
  \begin{equation}\label{eq:ball_eps_arg}
     B_r(y^*) \subset \{y: \med_{K_R, \Psi} f(x) < f(y) < \med_{K, \Psi} f(x)\}.
  \end{equation}
  Therefore, by assumption, we have
  \begin{equation} \label{eq:f_g_prop_eps}
    \{y: \med_{K_R, \Psi} f(x) \geq f(y) \} \subset \{y: \med_{K, \Psi} f(x) \geq f(y)\},
  \end{equation}
  and by~\eqref{eq:ball_eps_arg}
  \begin{equation}\label{eq:ball_prop_eps}
    B_r(y^*) \subset \{y: \med_{K, \Psi}f(y) \geq f(y)\}\setminus \{y: \med_{K_R, \Psi}f(y) \geq f(y)\}.
  \end{equation}
  Hence, our construction~\eqref{eq:ball_prop_eps} and the Euler-Lagrange inequality~\eqref{eq:ELI1} for the cut-off kernel imply
  \begin{align}
    -\int_{\R^d} &K_R(x,y) \psi(\med_{K_R, \Psi} f(x) - f(y)) \mathbbold{1}_{\{\med_{K_R, \Psi} f(x) < f(y)\}} \,dy \nonumber\\
    &\stackrel{\eqref{eq:ELI1}}{\leq} \int_{\R^d} K_R (x,y) \psi^*( \med_{K_R, \Psi} f(x) - f(y) )\mathbbold{1}_{\{\med_{K_R, \Psi} f(x) \geq f(y)\}} \, dy \nonumber\\
    & \stackrel{K_R \leq K}{\leq}\int_{\R^d} K (x,y) \psi^*( \med_{K_R, \Psi} f(x) - f(y) )\mathbbold{1}_{\{\med_{K_R, \Psi} f(x) \geq f(y)\}} \, dy\nonumber\\
    & \stackrel{\eqref{eq:ball_prop_eps}, \eqref{eq:f_g_prop_eps}}{\leq} \int_{\R^d} K (x,y) \psi( \med_{K, \Psi} f(x) - f(y) )\mathbbold{1}_{\{\med_{K, \Psi} f(x) > f(y)\}} \, dy- \int_{B_r(y^*)} K(x,y) \, dy . \label{eq:cont_up_to_constant}
  \end{align}
  As by construction $|z^*| \leq R_0$ and $r \leq 1$ we can bound the good, negative term from below by 
  \begin{align*}
  \int_{B_r(y^*)} K(x,y) \, dy &\geq |B_r(y^*)| \inf_{y \in B_r(y^*)} K(x,y)\\
  &\geq r^{d} \omega_d \inf_{z \in B_r(z^*)} K(0,z) \\
  &\geq \sqrt{C(R)} \omega_d \inf_{z \in B_{R_0+1}(0)} K(0,z).
  \end{align*}
  The infimum is positive due to~\eqref{eq:inf_kernel_pos}; thus, for $R$ big enough it holds $ \omega_d\inf_{z \in B_{R_0+1}(0)} K(0,z) \geq 2\sqrt{C(R)}$ and in this case
  \begin{equation*}
    \int_{B_r(y^*)} K(x,y) \, dy \geq 2C(R).
  \end{equation*}
  Plugging this into~\eqref{eq:cont_up_to_constant}, using the Euler-Lagrange inequality~\eqref{eq:ELI2} and the definition of $C(R)$ with $\psi^* \leq 1$ yields the contradiction
  \begin{align*}
    -&\int_{\R^d} K_R(x,y) \psi(\med_{K_R, \Psi} f(x) - f(y)) \mathbbold{1}_{\{\med_{K_R, \Psi} f(x) < f(y)\}} \,dy \\
    &\stackrel{\eqref{eq:ELI2}, \eqref{eq:cont_up_to_constant}}{\leq} -\int_{\R^d} K (x,y) \psi_*( \med_{K, \Psi} f(x) - f(y) )\mathbbold{1}_{\{\med_{K, \Psi} f(x) \leq f(y)\}} \, dy - 2C(R)\\
    &\stackrel{\eqref{eq:not_cuttable}, \eqref{eq:f_g_prop_eps}}{\leq} -\int_{\R^d} K (x,y) \psi( \med_{K_R, \Psi} f(x) - f(y) )\mathbbold{1}_{\{\med_{K_R, \Psi} f(x) < f(y)\}} \, dy - 2C(R)\\
    &\stackrel{|\psi| \leq 1}{\leq} -\int_{\R^d} K_R (x,y) \psi( \med_{K_R, \Psi} f(x) - f(y) )\mathbbold{1}_{\{\med_{K_R, \Psi} f(x) < f(y)\}} \, dy + \int_{B_R(0)^c}\!\! K(z) \, dz - 2C(R)\\
    &\stackrel{\text{ def. } C(R)}{<} -\int_{\R^d} K_R (x,y) \psi( \med_{K_R, \Psi} f(x) - f(y) )\mathbbold{1}_{\{\med_{K_R, \Psi} f(x) < f(y)\}} \, dy.
  \end{align*}
  This proves the claimed~\eqref{eq:cuttable} and finishes the proof.
 \end{proof}

 Let us briefly recall the median schemes from Section~\ref{sec:conv_energie}. First, there is $u_h$ which is the continuous in time median scheme~\eqref{def:semi_discrete_scheme} with kernel $K = G_h$ and $\Psi = |\cdot|$. Second, $u_{h,R}$ is the scheme~\eqref{def:uhR} with $K(x-y) = \mathbbold{1}_{|x-y| \leq R}G_h(x,y)$ and $\Psi = |\cdot|$. Third, we used $u_{h,\eps,R}$ given by~\eqref{def:u_heps} with $K = \mathbbold{1}_{|x-y| \leq R}G_h(x,y)$ and differentiable $\Psi = \Psi_\eps$ given by~\eqref{def:psi_eps}. At last, there is also $u$, the unique viscosity solution to the level-set mean curvature flow.
 
 We have already proven in Proposition~\ref{the:viscosity} that 
 \begin{equation*}
  u_{h} \rightarrow u
 \end{equation*}
 uniformly in $\R^d \times [0,T]$ for any $T > 0$. We will now also prove that $u_{h, \eps, R}$ converges uniformly to $u_{h, R}$ and this in turn converges to $u_h$:
 
\begin{lemma}\label{lem:uniform_convergenceReps}
  We have the two uniform convergences on $\R^d \times [0, T]$ for any $0 \leq T < \infty$:
  \begin{align}
    \lim_{R \uparrow \infty} u_{h,R} &= u_h, \label{eq:convergence_uhR}\\
    \lim_{\eps \downarrow 0} u_{h,\eps, R} &= u_{h,R}\label{eq:convergence_uhepsR}.
  \end{align}
\end{lemma}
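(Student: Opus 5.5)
Both convergences follow one template: a stability estimate for the median operator, fed into a Gronwall-type (here, merely additive) comparison argument that uses the contraction structure of the scheme. For~\eqref{eq:convergence_uhR} we compare $u_{h,R}$ and $u_h$, and for~\eqref{eq:convergence_uhepsR} we compare $u_{h,\eps,R}$ and $u_{h,R}$. Throughout we use that all three schemes are Lipschitz in space with $\Lip(\cdot(\cdot,t))\le\Lip(u_0)$ (Step~3 of the existence proof, passed to the limit as in~\eqref{eq:Lip_u_to_u_0}) and bounded by $\|u_0\|_\infty$.

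\emph{Step 1 -- Perturbed comparison.} Let $w$ solve the scheme with operator $\med_{K,\Psi}$ and $v$ the scheme with operator $\med_{K',\Psi'}$, from the same initial value. Suppose that for all Lipschitz $f$ with $\Lip(f)\le \Lip(u_0)$ we have
\[
\|\med_{K,\Psi}f-\med_{K',\Psi'}f\|_\infty\le \gamma .
\]
Then $\tilde v := v+\gamma t/h$ satisfies
\[
\partial_t\tilde v = \frac{\med_{K',\Psi'}v - v}{h}+\frac{\gamma}{h}\ \ge\ \frac{\med_{K,\Psi}v-v}{h} = \frac{\med_{K,\Psi}\tilde v-\tilde v}{h},
\]
using~\eqref{eq:homogenity}. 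Hence $\tilde v$ is a supersolution for the $(K,\Psi)$-scheme. The comparison principle, Lemma~\ref{lem:comp_princ}, then gives $w\le v+\gamma t/h$. The limits at infinity pose no issue, since both solutions equal the same constant outside a large ball by Lemma~\ref{lem:const_outside_ball}; adding $\gamma t/h$ only helps. Symmetrically $v\le w+\gamma t/h$, so
\[
\sup_{[0,T]}\|w-v\|_\infty\le \frac{\gamma T}{h}.
\]
With $h$ fixed, it therefore suffices to show $\gamma\to0$.

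\emph{Step 2 -- The case $R\uparrow\infty$.} Here $\Psi=|\cdot|$, and the kernels are $G_h$ and its cut-off $G_{h,R}$. Lemma~\ref{lem:cuttable} gives directly
\[
\gamma\le 2\Lip(u_0)\,C(R)^{1/(2d)},
\]
and $C(R)\to0$ by Gaussian decay. This proves~\eqref{eq:convergence_uhR}.

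\emph{Step 3 -- The case $\eps\downarrow0$.} This is the main obstacle: we need stability of the median with respect to $\Psi_\eps\to|\cdot|$ at fixed kernel $G_{h,R}$. I would argue in the same way as in Lemma~\ref{lem:cuttable}. Set $c=\med_{h,R}f(x)$ and $c_\eps=\med_{h,\eps,R}f(x)$, and suppose $c_\eps > c+\tilde C$. By continuity and the bound $|z|\le R_0$ on the median location, there is a ball $B_r(y^*)$ with $r=\tilde C/(2\Lip f)$ on which $c<f<c_\eps$, with both values at distance $\ge\tilde C/4$ from $f$. Compare the Euler--Lagrange inequalities~\eqref{eq:ELI1},~\eqref{eq:ELI2}:
\begin{itemize}
\item for $c$, with $\psi=\sign$;
\item for $c_\eps$, with the equality for $\psi_\eps$.
\end{itemize}
Since $\psi_\eps=\sign$ off $[-\eps,\eps]$, the two integrands differ only on $\{|c_\eps-f|\le\eps\}$. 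Taking $\eps<\tilde C/4$, this set is disjoint from the ball, and monotonicity produces a strictly negative surplus $\int_{B_r(y^*)}G_{h,R}(x,y)\,dy \gtrsim r^d$ (positive since $G_{h,R}$ is bounded below on $B_{R_0+1}$ for $R>R_0+1$). The remaining error is the mass of the set $\{|c_\eps-f|\le\eps\}$, and I would have to bound it by something smaller. This is the delicate point, because a flat plateau of $f$ at level $c_\eps$ could carry positive mass.

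I would try to circumvent this with monotonicity instead. Since $\psi_\eps\le\sign$ on $(0,\infty)$ and $\psi_\eps\ge\sign$ on $(-\infty,0)$, the sign of the EL function can be compared: $c_\eps\le c'$ for $c'$ the $|\cdot|$-median of the shifted data $f\pm\eps$. Concretely, $\psi_\eps(s)\ge\sign(s-\eps)$ and $\psi_\eps(s)\le\sign(s+\eps)$, which by the argument of Lemma~\ref{lem:uniqueness_median} gives
\[
\med_{h,R}(f-\eps)\le\med_{h,\eps,R}f\le\med_{h,R}(f+\eps).
\]
Together with~\eqref{eq:homogenity} this yields $\gamma\le\eps$ cleanly. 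Hence $\sup_{[0,T]}\|u_{h,\eps,R}-u_{h,R}\|_\infty\le \eps T/h\to0$, which proves~\eqref{eq:convergence_uhepsR}.
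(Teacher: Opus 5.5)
Your proof is correct and follows essentially the paper's structure. The paper also compares with $v\pm\gamma t/h$ via Lemma~\ref{lem:comp_princ}, uses Lemma~\ref{lem:cuttable} for $R\uparrow\infty$, and reduces $\eps\downarrow0$ to the estimate $|\med_{h,R}f-\med_{h,\eps,R}f|\le\eps$, which is \eqref{eq:eps_bound_epsmed}.

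The only real difference is how you prove that estimate. The paper argues by contradiction: it assumes $\med_{h,\eps,R}f(x)>\med_{h,R}f(x)+\eps$, so that $\psi_\eps(\med_{h,\eps,R}f(x)-f)=1$ exactly on $\{f\le\med_{h,R}f(x)\}$. It then extracts a strict inequality from a ball on which $f$ lies strictly between the two medians. Your route is the pointwise sandwich $\sign(s-\eps)\le\psi_\eps(s)\le\sign(s+\eps)$, fed into the Euler--Lagrange equality for $\Psi_\eps$. From there, the left derivative of the convex map $c\mapsto\int G_{h,R}(x,y)|c-f(y)\mp\eps|\,dy$ has a sign at $\med_{h,\eps,R}f(x)$. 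Together with uniqueness of the $|\cdot|$-median from Lemma~\ref{lem:uniqueness_median}, this gives the same bound with no ball construction; it is the cleaner argument.

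This also shows that the plateau worry in your abandoned first attempt was unfounded. If the assumed gap is larger than $\eps$ rather than $\tilde C$, a plateau near $\med_{h,\eps,R}f(x)$ enters only through $|\psi_\eps|\le1$, and with the favorable sign. So you should simply drop that first attempt.

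One small correction: Lemma~\ref{lem:const_outside_ball} covers only $u_h$ and $u_{h,R}$, not $u_{h,\eps,R}$. For the limits at infinity required by Lemma~\ref{lem:comp_princ} in the $\eps$-step, invoke Lemma~\ref{lem:exp_decay} instead. It gives $u_{h,\eps,R}(x,t)\to c$ as $|x|\to\infty$, the same constant as for $u_{h,R}$.
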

\begin{proof}
  \emph{Step 1 -- $C^{0,1}$-contraction:} Let us first prove the following useful claim:  
\emph{All median schemes are contractions in the sense that the $C^{0,1}$ norm is non-increasing, i.e, for any $0\leq t_1 \leq t_2 < \infty$ let $v$ represent any of the functions $u_h, u_{h, R}$ or $u_{h, \eps, R}$ then it holds
    \begin{align}
      \|v(\cdot, t_1)\|_{C^0} &\geq \|v(\cdot, t_2)\|_{C^0}\label{eq:max_non_increasing} \\
      \Lip(v(\cdot, t_1))&\geq \Lip(v(\cdot, t_2)) \label{eq:Lip_non_increasing}.
    \end{align}}

  Indeed, \eqref{eq:max_non_increasing} follows immediately from the comparison principle of Lemma~\ref{lem:comp_princ} when comparing $v$ to the constant functions with values $\min_{y \in \R^d} v(y,t_1)$ and $\max_{y \in \R^d} v(y,t_1)$. Also for  \label{eq:Lip_non_increasing} one can use the comparison principle of Lemma~\ref{lem:comp_princ}; this time between $v$ and the function $ z \mapsto v(z + x - y, t_1) \pm \Lip(v) |x-y|$.

\emph{Step 2 -- Argument for $\lim_{R\uparrow \infty} u_{h, R} = u_h$:}
  By Lemma~\ref{lem:cuttable} it holds: For any bounded, Lipschitz continuous function $f$, any $x \in \R^d$, and sufficiently large $R > 0$ it holds
  \begin{equation*}
    |\med_{h,R} f(x) - \med_h f(x)| \leq 2\Lip(f)\left( C(R,h) \right)^{\frac{1}{2d}}.
  \end{equation*}
  Here $C(R,h)$ is a constant only depending on $R, h$ given by
  \begin{equation*}
    C(R,h) := \int_{\{|z| > R\}} G_h(0,z)\, dz,
  \end{equation*}
  which in particular satisfies
  \begin{equation*}
    C(R,h) \to 0 \quad\quad \text{ as } R \uparrow \infty.
  \end{equation*}
  Thus, comparing $u_h$ to $v(x,t) = u_{h,R}(x,t) \pm \frac{2t}{h} \Lip(u_0) C(R)^{\frac{1}{2d}}$ in Lemma~\ref{lem:comp_princ} yields 
  \begin{equation*}
    u_h(x,t) - u_{h,R} \lessgtr \pm \frac{2t\Lip(u_0) C(R, h)^{\frac{1}{2d}}}{h}.
  \end{equation*}
  This means that $u_{h,R}$ converges uniformly to $u_h$ as $R \uparrow \infty$ on any compact time interval, because $h$ is fixed and $C(R, h) \to 0$ as $R \uparrow \infty$.

    \emph{Step 3 -- Argument for $\lim_{\eps \downarrow 0} u_{h,\eps,R} = u_{h, R}$:}
  We will first prove the following claim: For any continuous function $f$ it holds 
\begin{equation}\label{eq:eps_bound_epsmed}
  |\med_{h,R} f(x)  - \med_{h, \eps, R} f(x)| \leq \eps \quad \quad \text{ for all } x \in \R^d.
\end{equation}
Once ~\eqref{eq:eps_bound_epsmed} is shown, we can infer~\eqref{eq:convergence_uhepsR}. Indeed, comparing $u_{h,R}$ to $v(x,t) = u_{h, \eps, R}(x,t) \pm \frac{t}{h}\eps$ in Lemma~\ref{lem:comp_princ} yield
\begin{equation*}
  u_{h,R} - u_{h, \eps, R} \lessgtr \pm \frac{t}{h}\eps.
\end{equation*}

To prove the claimed~\eqref{eq:eps_bound_epsmed}, assume the contrary is true. So there exists a fixed $x$ such that $\med_{h, \eps, R} f (x) > \med_{h,R} f(x) + \varepsilon$ (the other case is handled analogously).
Note, by the properties~\eqref{def:psi_eps} of $\psi_\eps$ it holds $\psi_\eps(\med_{h, \eps, R} f(x) - f(y)) = 1$ in the set $\{y: \med_{h,R} f(x) \geq f(y) \}$. Hence, using the Euler-Lagrange inequality~\eqref{eq:ELI1} we have
\begin{align}
  \int_{\R^d} G_{h, R}(x,y) \mathbbold{1}_{\{\med_{h,R} f(x) < f(y) \}} \, dy &\leq \int_{\R^d} G_{h, R}(x,y) \mathbbold{1}_{\{\med_{h,R} f(x) \geq f(y) \}} \, dy \nonumber\\
  &= \int_{\R^d} G_{h, R}(x,y) \mathbbold{1}_{\{\med_{h,R} f(x) \geq f(y) \}} \psi_\eps(\med_{h, \eps, R} f(x) - f(y)) \, dy.\label{eq:R_prop}
\end{align}
As $f$ is continuous, we may make a similar construction as in the proof of Lemma~\ref{lem:contraction} (see~\eqref{eq:ball_prop_eps} for more details) to obtain a point $y^* \in \R^d$ and $r > 0$ satisfying
      \begin{equation*}
      B_r(y^*) \subset \{y: \med_{h, \eps, R} f(x) > f(y)\} \cap \{y: \med_{h,R} f(x) \geq f(y) \}^c.
    \end{equation*}
Thus, we get the strict inequality 
\begin{multline*}
  \int_{\R^d} G_{h, R}(x,y) \mathbbold{1}_{\{\med_{h,R} f(x) \geq f(y) \}}  \psi_\eps(\med_{h, \eps, R} f(x) - f(y)) \, dy\\
  <  \int_{\R^d} G_{h, R}(x,y) \mathbbold{1}_{\{\med_{h, \eps, R} f(x) > f(y) \}} \psi_\eps(\med_{h, \eps, R} f(x) - f(y)) \, dy.
\end{multline*}
Plugging this into~\eqref{eq:R_prop} and using $\psi_\eps \geq -1$ as well as $\med_{h, \eps, R} f(x) > \med_{h,R} f(x)$ once more yields the desired contradiction
\begin{align*}
  &\int_{\R^d} G_{h, R}(x,y) \mathbbold{1}_{\{\med_{h,R} f(x) < f(y) \}} \, dy \\
  &<  \int_{\R^d} G_{h, R}(x,y) \mathbbold{1}_{\{\med_{h, \eps, R} f(x) > f(y) \}} \psi_\eps(\med_{h, \eps, R} f(x) - f(y)) \, dy\\
  &\leq -\int_{\R^d} G_{h, R}(x,y) \mathbbold{1}_{\{\med_{h, \eps, R} f(x) \leq f(y) \}} \psi_\eps(\med_{h, \eps, R} f(x) - f(y)) \, dy\\
  &\leq \int_{\R^d} G_{h, R}(x,y) \mathbbold{1}_{\{\med_{h,R} f(x) < f(y) \}} \, dy.
\end{align*}
This proves the claim~\eqref{eq:eps_bound_epsmed} and finishes the proof of this lemma.
\end{proof}

 The following lemma proves that $u_h$ and $u_{h,R}$ stay constant outside a big ball if the initial condition satisfies this. Note, this is not true for $u_{h, \eps,R}$; in this case prove in Lemma~\ref{lem:exp_decay} an exponential decay.

\begin{lemma}\label{lem:const_outside_ball}
  If $u_0 \in C^{0,1}(\R^d)$ is constant outside a ball $B_L(0)$ for some $L > 0$ then it holds that $u_{h}$ and $u_{h,R}$ are also constant on $\big(\R^d \setminus B_L(0) \big)\times [0, \infty)$.  
\end{lemma}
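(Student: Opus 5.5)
The plan is to show that the constant value $c$ of $u_0$ outside $B_L(0)$ is a fixed point of the median at every $x\notin B_L(0)$, and then to propagate this through the time-discrete approximation used in the existence proof. I treat $u_h$ and $u_{h,R}$ simultaneously, since both kernels $G_h$ and $G_{h,R}$ are radially symmetric, nonnegative, and used with $\Psi=|\cdot|$.

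\emph{Step 1 (static claim).} Let $f$ be continuous and bounded with $f\equiv c$ on $\R^d\setminus B_L(0)$, and fix $x$ with $|x|\geq L$. I claim $\med_{K}f(x)=c$ for $K\in\{G_h,G_{h,R}\}$.

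\begin{itemize}
\item \emph{Half-space containment.} Every $y\in B_L(0)$ satisfies $y\cdot x\le L|x|\le |x|^2$. Hence $B_L(0)\subset P_x:=\{y:(y-x)\cdot x\le 0\}$, a closed half-space whose boundary hyperplane passes through $x$.
\item \emph{Weight bound.} Radial symmetry of $K(x,\cdot)$ about $x$ gives $\int_{P_x}K(x,y)\,dy=\frac12\int K(x,y)\,dy=:\frac M2$. Since $\{f\ne c\}\subset P_x$, the $K(x,\cdot)$-weights of $\{f>c\}$ and $\{f<c\}$ are each at most $M/2$.
\item \emph{Optimality.} Write $a,b,e$ for the weights of $\{f<c\}$, $\{f>c\}$, $\{f=c\}$, so $a+b+e=M$. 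The subdifferential of $c'\mapsto\int K(x,y)|c'-f(y)|\,dy$ at $c'=c$ is $[a-b-e,\;a-b+e]$. Using $b\le M/2$ gives $b\le a+e$, i.e.\ $a-b+e\ge0$; using $a\le M/2$ gives $a\le b+e$, i.e.\ $a-b-e\le 0$. So $0$ lies in the subdifferential and $c$ is a minimizer.
\item \emph{Conclusion.} By the uniqueness part of Lemma~\ref{lem:uniqueness_median}, $\med_K f(x)=c$.
\end{itemize}

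\emph{Step 2 (discrete scheme).} Take the explicit iteration~\eqref{def:app_scheme} with $\tau<h$ and initial datum $w_\tau^0=u_0$. By induction using Step 1, each $w_\tau^n$ is continuous and bounded, with $w_\tau^n\equiv c$ outside $B_L(0)$. Indeed, at $|x|\ge L$,
\[
w_\tau^{n+1}(x)=\tfrac{\tau}{h}\,c+\bigl(1-\tfrac{\tau}{h}\bigr)c=c.
\]
The linear interpolations $w_\tau$ therefore equal $c$ on $(\R^d\setminus B_L(0))\times[0,\infty)$. The existence proof shows that a subsequence converges locally uniformly to a classical solution, which inherits this property. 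By the comparison principle of Lemma~\ref{lem:comp_princ}, classical solutions are unique, so this limit is $u_h$, respectively $u_{h,R}$. The limits at infinity needed in Lemma~\ref{lem:comp_princ} exist and agree, both being $c$. So the whole family converges, and the claim follows.

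\emph{Main obstacle.} The only delicate point is Step 1: that the weight of a half-space through the kernel's center is exactly one half, and that this gives an exact fixed point rather than an approximate one. This uses the nonsmooth optimality condition for $\Psi=|\cdot|$ and fails for the smooth $\Psi_\eps$, which is why $u_{h,\eps,R}$ is excluded from the statement. Continuity of $u_0$ on $\partial B_L$ is harmless, since ties with $f=c$ are absorbed into $e$.
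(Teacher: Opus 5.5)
Your proof is correct, but it takes a different route from the paper.

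\textbf{The paper's route.} It argues with barriers. For each $\lambda\in\S^{d-1}$ it takes Lipschitz functions of $\lambda\cdot x$ alone that lie above and below $u_0$ and equal $c$ on a half-space disjoint from $B_L(0)$. Since all their level sets are half-spaces, the Euler--Lagrange inequalities make them stationary solutions. Lemma~\ref{lem:comp_princ} then traps $u_h$ and $u_{h,R}$ between them, and the union of these half-spaces over $\lambda$ is $\R^d\setminus B_L(0)$.

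\textbf{Your route.} You instead prove an invariance of the median operator itself: a continuous $f$ equal to $c$ off $B_L(0)$ has median exactly $c$ at every $|x|\ge L$. This holds because the half-space through $x$ with normal $x$ contains $B_L(0)$ and carries exactly half the mass of the radial kernel. You then propagate this through the explicit Euler iteration of the existence proof and identify the limit by uniqueness.

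\textbf{What each buys.} The barrier argument is shorter and acts on the solution directly. However, it applies Lemma~\ref{lem:comp_princ} to barriers that are unbounded and have no limit as $|x|\to\infty$. That lies outside the lemma's stated hypotheses, and it tacitly requires knowing that $u_h$ tends to $c$ at infinity. As printed it also needs the sign corrections $\xi=2c-\zeta$ and $H_\lambda=\{\lambda\cdot x+L\le 0\}$. Your route establishes the property for the constructed solution without any a priori information at infinity. The only thing it uses about $u_h$ is that it is this solution, i.e.\ uniqueness within the class of solutions with limit $c$ at infinity. That is how the paper's ``unique classic solution'' has to be read anyway.
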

\begin{proof}
  Assume that $u_0$ has value $c \in \R$ outside $B_L(0)$. Define for any $\lambda \in \S^{d-1}$ the two functions
  \begin{equation*}
    \zeta(x) = \Lip(u_0)\max \{0, \lambda \cdot x +L\} + c \quad \quad \text{ and } \quad \quad \xi(x) = - \zeta(x);
  \end{equation*}
  Then it holds 
  \begin{equation*}
    \xi \leq u_0 \leq \zeta
  \end{equation*}
  and $\xi = \zeta = c$ on the half-space $H_{\lambda} := \{x \in \R^d: \lambda \cdot x + L \geq 0\}$.
  As every level-set of $\xi$ and $\zeta$ is either a half-spaces or the full space $\R^d$, the Euler-Lagrange inequalities~\eqref{eq:ELI1} and~\eqref{eq:ELI2} imply that $\xi$ and $\zeta$ are stationary solutions to~\eqref{def:semi_discrete_scheme} and~\eqref{def:uhR}. Hence, the comparison principle of Lemma~\ref{lem:comp_princ} yield for any $(x, t) \in \R^d \times [0, \infty)$ and for $v$ either $u_h$ or $u_{h,R}$
  \begin{equation*}
    \xi(x) \leq v(x,t) \leq \zeta(x).
  \end{equation*}
  In particular, it holds $v(x,t) = c$ for every $(x,t) \in H_{\lambda} \times [0, \infty)$. As $\lambda \in \S^{d-1}$ was arbitrary and as $\bigcup_{\lambda \in \S^{d-1}} H_\lambda = \R^d \setminus B_L(0)$, we conclude the statement of this lemma.
\end{proof}

Finally, we derive exponential decay bounds on $u_{h,\eps, R}$ that are needed in the proof of Theorem~\ref{cor:energy_conv} and Lemma~\ref{lem:mono_energie}.

\begin{lemma}\label{lem:exp_decay}
If $u_0 = c$ outside $B_L(0)$ then $u_{h, \eps, R}$, $\med_{h, \eps, R} u_{h,\eps, R}$ and $H_{h, \eps, R}$ have exponential decay, i.e., for any $t \in [0,T]$ and any $x \in \R^d$
\begin{align}
  |u_{h,\eps, R}(x,t) - c| &\leq C_2 e^{- (|x| - L)}, \label{eq:exp_u}\\
  |H_{h,\eps, R}(x,t) | &\leq C_3(\eps) e^{- (|x| - L)} \label{eq:exp_H},\\
  |\med_{h, \eps, R} u_{h,\eps, R}(x,t) - c| &\leq C_4 e^{- (|x| - L)} \label{eq:exp_medu}.
\end{align}
The constants $C_2$ and $C_4$ do not depend on $\eps$, while $C_3(\eps)$ may depend on $\eps$. They may depend on $h, R$ or $T$.
\end{lemma}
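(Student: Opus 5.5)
The plan is a barrier argument via the comparison principle, Lemma~\ref{lem:comp_princ}, applied to the generalized scheme with $K=G_{h,R}$ and $\Psi=\Psi_\eps$. The point is that for $\Psi_\eps$ the half-space profiles used in Lemma~\ref{lem:const_outside_ball} are no longer exactly stationary, so they will be replaced by exponential barriers. By the translation invariance~\eqref{eq:homogenity} I may take $c=0$. I will build a supersolution of the form
\[
\overline w(x,t)=A e^{\beta t}\, e^{\lambda\cdot x + L}, \qquad \lambda\in\S^{d-1},
\]
and a matching subsolution $-\overline w$. Here $A\ge \|u_0\|_\infty$, so that $\overline w(\cdot,0)\ge u_0$: on $\{\lambda\cdot x\ge -L\}$ the exponential is at least $1$, and outside that set $u_0=0\le\overline w$. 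Since the support of $G_{h,R}$ has radius $R$ and $\overline w$ is monotone along $\lambda$, monotonicity~\eqref{eq:monotonicity_generalized_median} together with the contraction~\eqref{eq:contraction} gives $\med_{h,\eps,R}\overline w(x)\le \sup_{|y-x|\le R}\overline w(y)= e^{R}\,\overline w(x)$. Hence $(\med\overline w-\overline w)/h\le (e^R-1)\overline w/h$, and the choice $\beta=(e^R-1)/h$ makes $\overline w$ a supersolution.

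The comparison principle as stated requires limits at infinity. I would handle this either by truncating, i.e.\ replacing $\overline w$ by $\min\{\overline w,\ \|u_0\|_\infty\}$ (still a supersolution, since the minimum of two supersolutions is one, by monotonicity), or by rechecking Step~1 of Lemma~\ref{lem:comp_princ}: it only needs a first touching point, and that exists because $u_{h,\eps,R}$ is bounded and $\overline w\to\infty$ in the direction $\lambda$ while tending to $0$ in the direction $-\lambda$. In the latter direction the two functions are compared via the strict-inequality reduction $\overline w+\delta(t+1)$ used there. This bounded-but-not-constant-at-infinity issue is the main technical obstacle; I would prove a variant of Lemma~\ref{lem:comp_princ} in which $\limsup_{|x|\to\infty}(\underline w-\overline w)<0$ replaces the existence of limits.

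Comparison then gives $|u_{h,\eps,R}(x,t)|\le A e^{\beta T} e^{\lambda\cdot x+L}$ for every $\lambda$. Choosing $\lambda=-x/|x|$ yields~\eqref{eq:exp_u} with $C_2=\|u_0\|_\infty e^{\beta T}$, which is independent of $\eps$. For~\eqref{eq:exp_medu}, apply~\eqref{eq:contraction} again: $|\med u(x)|\le \sup_{B_R(x)}|u|\le C_2 e^{R}e^{-(|x|-L)}$.

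For~\eqref{eq:exp_H}, write $H_{h,\eps,R}(x)=\frac{2}{\sqrt h}\int G_{h,R}(x,y)\psi_\eps(u(x)-u(y))\,dy$ and use $|\psi_\eps(s)|\le \|\psi_\eps'\|_\infty|s|\le C_1|s|/\eps$. This gives
\[
|H(x)|\le \frac{2C_1}{\eps\sqrt h}\|G_h\|_{L^1}\,2\sup_{B_R(x)}|u|\lesssim \frac{C_2e^R}{\eps\sqrt h}\,e^{-(|x|-L)},
\]
with a constant $C_3(\eps)$ that depends on $\eps$, as the statement allows. The same bound holds for the $\psi_\eps'$ version of $H$ used earlier, again with a constant depending on $\eps$.
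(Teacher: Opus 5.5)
Your supersolution computation is correct: $\med_{h,\eps,R}\overline w(x)\le\sup_{B_R(x)}\overline w=e^R\overline w(x)$, so $\beta=(e^R-1)/h$ works. Once $|u_{h,\eps,R}-c|\le\overline w$ is known, your deductions of \eqref{eq:exp_u}, \eqref{eq:exp_medu} and \eqref{eq:exp_H} are also correct. The gap is exactly at the step you flagged, and neither of your fixes closes it. Step~1 of Lemma~\ref{lem:comp_princ} needs a first touching point of $u_{h,\eps,R}-c$ and $\overline w+\delta(t+1)$ to be attained. On $\{\lambda\cdot x<-L-M\}$ the barrier is at most $\delta(t+1)+Ae^{\beta T}e^{-M}$. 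So you would need $u_{h,\eps,R}-c$ to stay below $\delta(t+1)$ there for large $M$, i.e.\ $\limsup_{|x|\to\infty}(u_{h,\eps,R}-c)\le 0$. Your proposed variant with $\limsup_{|x|\to\infty}(\underline w-\overline w)<0$ presupposes the same thing. That is the qualitative content of the lemma itself, and boundedness of $u_{h,\eps,R}$ does not give it: a priori the supremum of $u_{h,\eps,R}-c-\overline w$ may only be approached along points escaping to infinity. Lemma~\ref{lem:const_outside_ball} is not available, since $u_{h,\eps,R}$ is not constant outside a ball. Truncating to $\min\{\overline w,\|u_0-c\|_\infty\}$ does not help either, since it still tends to $0$ in the direction $-\lambda$.

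A repair within your approach is to add a term that blows up at infinity, e.g.\ $W:=Ae^{\beta t}e^{L-|x|}+\delta e^{\beta t}e^{|x|}$. Since $e^{\mp|y|}\le e^{R}e^{\mp|x|}$ for $|y-x|\le R$, you still have $\med_{h,\eps,R}W(x)\le\sup_{B_R(x)}W\le e^RW(x)$, so $W$ is a supersolution with $W(\cdot,0)>|u_0-c|$. Now $W-|u_{h,\eps,R}-c|\ge\delta e^{|x|}-\|u_0-c\|_\infty\to\infty$ uniformly on $[0,T]$, so the first touching point in Step~1 of Lemma~\ref{lem:comp_princ} lies in a compact set. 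Letting $\delta\downarrow0$ then gives $|u_{h,\eps,R}-c|\le\|u_0-c\|_\infty e^{\beta t}e^{L-|x|}$ directly, without choosing $\lambda$. Alternatively, you can avoid touching points altogether. Set $v:=u_{h,\eps,R}-c$ and $M(t):=\sup_x(v-\overline w)_+$. Combine the uniform expansion $v(t+\tau)=v(t)+\tau\partial_tv(t)+O(\tau^2/h^2)$ from the proof of Lemma~\ref{lem:mon_time_derivative}, the convexity bound $\overline w(t+\tau)\ge(1+\beta\tau)\overline w(t)$, and $\med v\le\med\overline w+M$. This gives $M(t+\tau)\le M(t)+O(\tau^2)$, hence $M\equiv0$.

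So repaired, your route is genuinely different from the paper's, which uses no comparison principle. The paper proves $|\med u(x)|\le C\big(|u(x)|+\int G_h(x,y)|u(y)|\,dy\big)$ from the strict convexity of $m\mapsto\int G_{h,R}(x,y)\Psi_\eps(m-u(y))\,dy$, then runs Gr\"onwall in the weighted norm $\sup_x e^{(|x|-L)_+}|f(x)|$. Your argument needs only $\supp G_{h,R}(x,\cdot)\subset\overline{B_R(x)}$. It also makes $C_2=\|u_0-c\|_\infty e^{(e^R-1)T/h}$ and $C_4=e^RC_2$ visibly independent of $\eps$, which is what Lemma~\ref{lem:mono_energie} uses when letting $\eps\downarrow0$.

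Finally, your closing remark on the ``$\psi_\eps'$ version'' of $H$ is right only if that symbol is read as the odd regularized sign, which is how the paper's own proof treats it. For the even function $\psi_\eps'$, with $\psi_\eps'(0)>c_1/\eps$, the integrand does not vanish at infinity, and that $H$ does not decay.
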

\begin{proof}
W.l.o.g.\ we may assume that $c = 0$. For easier readability we drop the $h$, $\eps$ and $R$ subscripts in $u_{h,\eps, R}$, $H_{h,\eps, R}$ and $\med_{h, \eps, R}$. 

\emph{Step 1 -- Median estimate:} We claim there exists a constant $C$ only depending on $\Lip(u_0)$, $h$ and $R$ but not on $\eps$ such that
\begin{equation}\label{eq:med_u_by_u}
|\med u(x)|  \leq C \left(|u(x)| + \int_{\R^d} G_h(x,y) |u(y)|\,dy\right).
\end{equation}
Indeed, define the function 
\begin{equation*}
  F(m; x):= \int_{\R^d} G_{h,R}(x,y) \Psi_\eps(m- u(y)) \, dy.
\end{equation*}
The Euler-Lagrange equation resulting from the definition of $\med$ reads
\begin{equation}\label{eq:epsEL}
  0 = F'(\med u(x); x).
\end{equation}
Choose $z$ with $\med u(x) = u(z)$. W.l.o.g\ assume that $\med u(x) \geq u(x)$. By the continuity of $u$, for any $m \in [u(x), u(z)]$ there exists a $z_m$ such that $m=u(z_m)$ and $|x-z_m| \leq R$. As $\Lip(u) \leq \Lip(u_0)$ by~\eqref{eq:Lip_non_increasing}, there exists an $r$ independent of $m$, $h$, $R$ and $\eps$, with
\begin{equation*}
  u(y) \in \left[u(z_m)- \frac{\eps}{2}, u(z_m) + \frac{\eps}{2}\right] \quad \quad \text{ for all } y \in B_r(z_m).
\end{equation*}
Combining this with the previous observation $|x-z_m| \leq R$ yields with $\psi_\eps' > c_1/\eps$ on the compact interval $[-\frac{\eps}{2}, \frac{\eps}{2}]$ for all $y \in B_r(z_m)$ that
\begin{equation*}
  G_{h,R}(x,y) \psi''_\eps(m - u(y))\,dy \geq G_{h}(0, e_1 R) \frac{c_1}{\eps}  =: \frac{\lambda}{\eps} > 0.
\end{equation*}
This lets us bound the second derivative of $F(\cdot; x)$ for sufficiently large $R$ and all $m \in [u(x), \med u(x)]$ by
\begin{align*}
  F''(m; x) = \int_{\R^d} G_h(x,y) \psi''_\eps(m - u(y)) \,dy \geq \frac{1}{4}|B_r(z_m)| \frac{\lambda}{\eps} > 0.
\end{align*}
But now by the mean value theorem and~\eqref{eq:epsEL} it follows with
\begin{equation*}
    F'(u(x); x) \leq \int_{\R^d} G_h(x,y) \psi_\eps(u(x) - u(y)) \,dy \leq \frac{C_1}{\eps} \left( |u(x)| + \int_{\R^d} G_h(x,y)|u(y)| \, dy\right),
\end{equation*}
where we used $|\psi_\eps'|\leq C_1/\eps$ and $\psi_\eps(0) = 0$, that
\begin{align*}
  (|\med u(x)| - |u(x)|)\ \frac{\lambda}{4 \eps}|B_r(0)| &\leq |\med u(x) - u(x)|\ \frac{\lambda}{4 \eps}|B_r(0)|\\
  & \leq |F'(u(x) ; x) - F'(\med u(x); x)| \\
  &\leq \frac{C_1}{\eps} \left(|u(x)| + \int_{\R^d} G_h(x,y)|u(y)| \, dy\right).
\end{align*}
Rearranging yields the claim~\eqref{eq:med_u_by_u}.

\emph{Step 2 -- Argument for~\eqref{eq:exp_u}}: Let us introduce the weighted norm 
\begin{equation*}
  \|f\|_{L} := \sup_{x \in \R^d} e^{(|x| - L)_+} |f(x)|.
\end{equation*}
Recall that we assumed $u_0$ is bounded and equal to zero for $|x|>L$ and thus it holds
\begin{equation*}
  \|u_0\|_L \leq \|u_0\|_\infty < \infty.
\end{equation*}
Integrating~\eqref{def:u_heps} from $0$ to $t$ yields
\begin{equation*}
  u(t) = u_0 + \int_0^t \frac{\med u(s) - u(s)}{h} \, ds.
\end{equation*}
As $\|\cdot\|_L$ is a norm we can use the triangle and Jensen's inequality and finally \emph{Step 1} to estimate
\begin{align*}
  \|u(t)\|_L &\leq \|u_0\|_L + \frac{1}{h} \int_0^t \|\med u(s)\|_L + \| u(s)\|_L \,ds \\
  &\leq \|u_0\|_L + \frac{1}{h} \int_0^t C\left\|\int_{\R^d}  G_h(\cdot,y) u(y,s)\,dy + |u(s)| \right\|_L + \| u(s)\|_L \,ds  \\
  &\leq \|u_0\|_L + \frac{1}{h} \int_0^t \int_{\R^d} C G_h(\cdot,y) \|u(s)\|_L\,dy  + (1+C)\| u(s)\|_L \,ds\\
    &\leq \|u_0\|_L + \frac{1}{h} \int_0^t (1+2C)\| u(s)\|_L \,ds.
\end{align*}
Thus, by Gr{\"o}nwall's inequality we conclude
\begin{equation*}
  \|u(t)\|_L \leq e^{t(1+2C)}\|u_0\|_L \lesssim e^{t(1+2C)}.  
\end{equation*}
But now by the definition of $\|\cdot\|_L$ we can finish this step, as
\begin{equation}\label{eq:exp_decay_u}
  |u(x,t)| \leq e^{-(|x| - L)_+} \|u(t)\|_L \lesssim e^{-(|x| - L)_+} e^{t(1+2C)}.
\end{equation}

\emph{Step 3 -- Argument for~\eqref{eq:exp_H}:} Simply using $\psi'_\eps(0) = 0$ and $|\psi''_\eps| \leq C_\eps$ we crudely estimate
\begin{align}
  |H(x)| &\leq \frac{1}{\sqrt{h}} \int_{\R^d} G_h(x,y) |\psi'_\eps(u(x) - u(y))|\,dy \nonumber \\
  &\leq \frac{C_\eps}{\sqrt{h}} \int_{\R^d} G_h(x,y) \big(|u(x)| + |u(y)|\big)\,dy \nonumber\\
  &\leq C(\eps, h)\left( e^{-(|x| - L)_+} + \int_{\R^d} G_h(x,y) e^{-(|x| - L)_+} \,dy\right). \label{eq:H_exp_est}
\end{align}
We use the general inequality
\begin{equation*}
  (|y| - L)_+ \geq (|x| - L)_+ - |x-y| \quad \quad \text{ for any } x,y \in \R^d,
\end{equation*}
to get the control
\begin{align}\label{eq:exp_control_convolution}
  \int_{\R^d} G_h(x,y) e^{-(|y| - L)_+} \,dy &\leq e^{-(|x| - L)_+}\int_{\R^d} G_h(x,y) e^{|x-y|} \,dy \\
  &= e^{-(|x| - L)_+} \int_{\R^d} G_h(0,z) e^{|z|} \,dz \lesssim  e^{-(|x| - L)_+} \nonumber.
\end{align}
Plugging this into~\eqref{eq:H_exp_est} yields~\eqref{eq:exp_H}.

\emph{Step 4 -- Argument for~\eqref{eq:exp_medu}:} Combining~\eqref{eq:med_u_by_u}, \eqref{eq:exp_decay_u} and~\eqref{eq:exp_control_convolution} yields
\begin{equation*}
  |\med u (x)| \lesssim |u(x)| + \int_{\R^d} G_h(x,y) |u(y)|\,dy \lesssim e^{-(|x| - L)_+} + \int_{\R^d} G_h(x,y) e^{-(|y| - L)_+}\,dy \lesssim e^{-(|x| - L)_+}.
\end{equation*}
This finishes the proof.
\end{proof}

\section{Construction of Initial Conditions}
\label{sec:initial}

In this section we prove that our main result, Theorem~\ref{cor:energy_conv}, is not an empty statement, in the sense that there exists well prepared initial conditions that satisfy the condition~\eqref{def:well_prepared_init}. In general we expect that a broad class of initial conditions satisfy~\eqref{def:well_prepared_init}.
\begin{lemma}
  Let $\Omega_0 \subset \R^d$ be a bounded set with $C^3$ boundary. Then there exists a function $u_0 \in C^3_b(\R^d\setminus S) \cap C^{0,1}(\R^d)$ where $S$ is closed and satisfies $\overline{\R^\setminus S} = \R^d$ and a scalar $R > 0$ with 
  \begin{align}
    \{ u_0 > 0 \} &= \Omega_0, \label{eq:zero_level_of_u_0}\\
    u_0 &\equiv -1 \quad \quad \text{ on } \R^d \setminus B_R(0), \label{eq:const_out_u_0}\\
    \sup_{h \in (0, 1]} \int_{\R^d} \left| \frac{1}{\sqrt{h}}\int_{\R^d} G_h(x,y) \sign(u_0(x) - u_0(y)) \, dy \right| \, dx &< \infty \label{eq:L1_u_0},\\
    \sup_{h \in (0,1]} E_h(u_0) < \infty \label{eq:energy_bound_u_0}.
  \end{align}
\end{lemma}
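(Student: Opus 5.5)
I propose to build $u_0$ from the signed distance function of $\Omega_0$. Let $d_0$ denote the signed distance, positive inside $\Omega_0$. Since $\partial\Omega_0$ is compact and $C^3$, $d_0$ is $C^3$ on a tubular neighbourhood $\{|d_0|<2\rho\}$ for some $\rho>0$, and $|\nabla d_0|=1$ there. I set
\[
u_0 := \max\{\min\{d_0/\rho,\,1\},\,-1\}.
\]
Then $u_0$ is Lipschitz, $\{u_0>0\}=\Omega_0$, and $u_0\equiv-1$ outside $B_R(0)$ once $R$ exceeds $\rho$ plus the diameter of $\Omega_0$. Let $S$ be the union of $\{|d_0|=\rho\}$ and the closed set where $d_0$ fails to be $C^3$, which lies inside $\{|d_0|\ge 2\rho\}$. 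On the complement of $S$, $u_0$ is either constant or equal to $d_0/\rho$, so it is $C^3_b$. If needed I smooth $d_0$ away from the tube so that $S$ is just the two level sets $\{d_0=\pm\rho\}$. These are $C^3$ hypersurfaces with empty interior, so $\overline{\R^d\setminus S}=\R^d$. This settles \eqref{eq:zero_level_of_u_0} and \eqref{eq:const_out_u_0}.

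For \eqref{eq:energy_bound_u_0} I use that $u_0$ is Lipschitz with compact support of $u_0+1$. Writing $E_h(u_0)=\int G_1(z)\int |u_0(x)-u_0(x-\sqrt h z)|/\sqrt h\,dx\,dz$, the inner integral is at most $|z|\int|\nabla u_0|$, which gives a bound uniform in $h$.

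The main work is \eqref{eq:L1_u_0}. Write $H(x)=\frac{1}{\sqrt h}\int G_h(x,y)\sign(u_0(x)-u_0(y))\,dy$ and split $x$ into three regions.

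\emph{Region 1: $\{|d_0|>\rho\}$.} Here $u_0(x)=\pm1$ is an extremal value, so the sign is constant and $|H(x)|\le \frac{1}{\sqrt h}\int G_h(x,y)\mathbbold{1}_{\{|d_0(y)|<\rho\}}\,dy$. This quantity decays like $h^{-1/2}e^{-\dist(x,\{|d_0|<\rho\})^2/(4h)}$. The part within distance of order $\sqrt h$ of the slab has measure $O(\sqrt h)$, so this region contributes $O(1)$. Points with $d_0(x)\ge 2\rho$ sit far from the slab and contribute exponentially small amounts; the bounded support of $u_0+1$ keeps the exterior part integrable as well.

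\emph{Region 2: the interior $\{|d_0|<\rho\}$.} Here $u_0=d_0/\rho$ is $C^3$ with $|\nabla u_0|=1/\rho$. I would invoke the consistency estimate behind \eqref{eq:strong_consistency}, in the form of a Taylor expansion of $\frac{1}{\sqrt h}\int G_h\sign(u_0(x)-u_0(y))$ around a smooth level set. The zeroth-order term cancels by the symmetry of the Gaussian about the tangent plane, leaving $c\,\nabla\cdot(\nabla u_0/|\nabla u_0|)+O(\sqrt h)$. This requires the level sets of $u_0$ near $x$, within the $B_{C\sqrt h}(x)$ window, to be smooth graphs. That holds when $\dist(x,\{|d_0|=\rho\})\gtrsim\sqrt h$, and there $H$ is bounded by the curvature of the parallel surfaces, which is uniformly bounded. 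The Gaussian tails beyond $C\sqrt{h|\log h|}$ are negligible.

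\emph{Region 3: the $O(\sqrt h)$ layers around $\{d_0=\pm\rho\}$.} This is the main obstacle, since there $u_0$ has a kink and $H$ can be of size $h^{-1/2}$. However, $|H|\le h^{-1/2}$ trivially and the layers have volume $O(\sqrt h)$, so they contribute $O(1)$. The same crude bound handles the measure-$O(\sqrt h\log)$ transition zones if I choose the threshold as $C\sqrt h$. The one step I need to verify carefully is that the Taylor argument in Region 2 really gives a uniform $O(1)$ bound up to distance $C\sqrt h$ from the kink, and not just a bound growing like an inverse power of that distance. If it fails, I would instead cap $u_0$ with a $C^3$ saturating profile $\phi(d_0)$, with $\phi'>0$ on $(-\rho,\rho)$ and $\phi$ flat beyond. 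Then $S=\emptyset$ near the tube, and the difficulty moves to where $\phi'\to0$. There I would control $H$ by the level-set curvature bound, using that level sets of $\phi(d_0)$ coincide with those of $d_0$ as long as $\phi$ is strictly monotone, so the $\sign$ only sees $d_0$.

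Summing the three regions gives \eqref{eq:L1_u_0}.
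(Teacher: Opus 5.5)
Your construction (the clamped signed distance) and your split into flat regions and the slab match the paper's. However, the main line of your proof of \eqref{eq:L1_u_0} does not close at exactly the step you flag, and the bookkeeping you propose for it fails. Your Region~2 argument only covers points whose window $B_{C\sqrt{h|\log h|}}(x)$ stays off the kink, since that is what makes the Gaussian tails negligible. The leftover inner layer then has width of order $\sqrt{h|\log h|}$, and the crude bound $|H|\le h^{-1/2}$ over it gives $O(\sqrt{|\log h|})$, not $O(1)$. Taking the threshold $C\sqrt h$ instead just pushes these points back into Region~2, where your argument as stated does not reach. The worry itself is misplaced, though: from inside the slab the kink is invisible to $H$.

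The missing observation is the one you state only for your fallback profile, and it applies verbatim to the clamp. Suppose $|d_0(x)|<\rho$, so $u_0(x)\in(-1,1)$. The clamp is strictly increasing on $[-\rho,\rho]$. Moreover $u_0(y)=1>u_0(x)$ when $d_0(y)\ge\rho$, and $u_0(y)=-1<u_0(x)$ when $d_0(y)\le-\rho$. Hence $\sign(u_0(x)-u_0(y))=\sign(d_0(x)-d_0(y))$ for every $y\in\R^d$, and $H(x)$ only sees the parallel surface $\{d_0=d_0(x)\}$. Since $d_0\in C^3$ with $|\nabla d_0|=1$ on $\{|d_0|<2\rho\}\supset B_\rho(x)$, the Taylor expansion with window $\sqrt h R_h\le\rho$, $R_h=\sqrt{6|\log h|}$, applies. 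It gives uniform convergence of $H$ to a multiple of $\Delta d_0$ on the whole open slab, with no restriction on the distance to $\{|d_0|=\rho\}$. This is exactly the paper's argument: it replaces $u_0$ near $x$ by a smooth function with the same sign pattern.

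The only place where $H$ is of size $h^{-1/2}$ is the outer side $|d_0(x)|\ge\rho$. There $\int_{\{d_0\ge\rho\}}|H|\,dx=h^{-1/2}\int_{\{d_0\ge\rho\}}\int_{\{d_0<\rho\}}G_h(x,y)\,dy\,dx$, and similarly for $\{d_0\le-\rho\}$. These are thresholding energies, bounded uniformly in $h$ by a multiple of the perimeters. Note that for $d_0(x)>\rho$ the correct integrand is $\mathbbold{1}_{\{d_0(y)<\rho\}}$, not the slab indicator, although the difference is exponentially small. With this, Region~3 and the smooth fallback profile are unnecessary, and the difficulty you anticipate where $\phi'\to0$ does not arise.
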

\begin{proof}
  Denote by $d$ the signed distance function to $\Omega_0$, i.e.,
  \begin{equation*}
    d(x) = \begin{cases}
      -\dist(x, \Omega_0) & \text{ if } x \in \R^d \setminus \Omega_0,\\
      \dist(x, \R^d \setminus \Omega_0) & \text{ if } x \in \Omega_0.
    \end{cases}
  \end{equation*}
  Furthermore, as $\Omega_0$ is bounded we know that the reach $2\delta_0$ of the manifold $\partial \Omega_0$ is positive. Let $\phi \in C^\infty(\R^d)$ be a linear interpolation on $[-\delta_0, \delta_0]$ that is constantly extended outside:
  \begin{equation*}
    \phi(s) = \begin{cases}
      -1 & \text{ if } s \leq \delta_0,\\
      s &\text{ if } s \in [-\delta_0, \delta_0],\\
      1 & \text{ if } s \geq \delta_0.
    \end{cases}
  \end{equation*}
Now we define our initial conditions by
\begin{equation*}
  u_0 = \phi \circ d.
\end{equation*}
By construction, \eqref{eq:zero_level_of_u_0} and~\eqref{eq:const_out_u_0} hold. Also, the regularity of the boundary of $\Omega_0$ implies that $d \in C^3_b(\R^d)$ which means also $u_0 \in C^3_b(\{x: |d(x)| \neq \delta_0\} )$; so setting $S= \{x: |d(x)| = \delta_0\}$ yields the desired regularity properties.

Next we prove~\eqref{eq:L1_u_0}. We begin by showing the convergence
\begin{equation}\label{eq:H_h_to_H}
  \lim_{h \downarrow 0} \frac{1}{\sqrt{h}} \int_{B_{\sqrt{h} R_h}(x)} G_h(x,y) \sign(u_0(x) - u_0(y)) \,dy = c_0 \nabla \cdot \left(\frac{\nabla u_0}{|\nabla u_0}\right),
\end{equation}
if $u_0 \in C^3(B_{\delta_0}(x))$ and $R_h = \sqrt{6|\log (h)|}$ and $|\nabla u_0(x)| > 0$ . Moreover, the rate at which this convergence holds depends on $x$ only through $|\nabla^2 u_0(x)|/|\nabla u_0(x)|$, i.e., it is uniform on sets where the Hessian is bounded from above and the gradient is bounded from below.

Since $\operatorname{sign}\big(u_0(x) - u_0(y)\big)$ is invariant under multiplying $u_0$ by a
positive constant, and since the normalized gradient $\nabla u_0 / |\nabla u_0|$ is invariant
under the same rescaling, both sides of \eqref{eq:H_h_to_H} are unchanged if we replace $u_0$
by
\[
  v := \frac{u_0}{|\nabla u_0(x)|}.
\]
This gives $|\nabla v(x)| = 1$, so it suffices to prove \eqref{eq:H_h_to_H} under this
normalization. The only quantity affected is the Hessian, which rescales as
\[
  \nabla^2 v = \frac{\nabla^2 u_0}{|\nabla u_0(x)|}.
\]

We make a change of variables $z = \frac{y-x}{\sqrt{h}}$ to obtain
\begin{align*}
  \frac{1}{\sqrt{h}} \int_{B_{\sqrt{h}R_h}(x)} G_h(x,y) \sign(u_0(x) - u_0(y)) \,dy &=\frac{1}{\sqrt{h}} \int_{B_{R_h}(x)} G_1(0,z) \sign(u_0(x) - u_0(x + \sqrt{h} z)) \,dz.
\end{align*}

W.l.o.g.\ we can assume that $u_0(x) = 0, \nabla u_0(x) = e_d$. 
We are interested in understanding the behavior of $\sign(u_0(x) - u_0(x + \sqrt{h} z))$. Thus, we need to find all point $z$ at which $u_0(x) - u_0(x + \sqrt{h} z) = 0$. In particular, if we denote $(z', z_d) = z$, then we claim that for $h$ small enough, there is a unique $z_d = g_h(z')$ with $u_0(x) - u_0(x + \sqrt{h} z) = 0$. 
By Taylor expansion, since $u_0 \in C^3(B_{\sqrt{h}R_h}(x))$ (for $h$ small enough), we have
\begin{equation*}
  0 \stackrel{!}{=} u_0(x) - u_0(x+ \sqrt{h}z) = \sqrt{h} z_d + \frac{h}{2} z^\intercal \nabla^2 u_0(x) z + \mathcal{O}(h^{\frac{3}{2}} |z|^3).
\end{equation*}
Rearranging and dividing by $\sqrt{h}$ turns the previous equation to 
\begin{equation}\label{eq:zero_sign_level}
  z_d = -\frac{\sqrt{h}}{2} z^T \nabla^2 u_0(x) z + \mathcal{O}(h |z|^3).
\end{equation}
Using $\|\nabla^2 u_0\|_\infty < \infty$ yields the uniform bound
\begin{equation*}
  z_d = \mathcal{O}(\sqrt{h}|z|^2).
\end{equation*}
So there is a zero of $\sign(u_0(x) - u_0(x + \sqrt{h} z))$ close to $0$; remember that $z_d= 0$ meant $x_d = y_d$ before the coordinate transform. 

Next we want to see that there is no second zero. Therefore, define the function
\begin{equation*}
  F(z_d) := u_0(x + \sqrt{h} z) - u_0(x).
\end{equation*}
Differentiating yields 
\begin{equation*}
  \partial_{z_d} F(z_d) = \partial_{z_d} u(x + \sqrt{h}z) = \partial_{x_d} u_0(x+ \sqrt{h}z) \sqrt{h}.  
\end{equation*}
As $\partial_{x_d} u_0(x) = 1$ and $\|\nabla^2 u_0\|_\infty < \infty$ we have that $\partial_{z_d} F(z_d) > 0$ for $|\sqrt{h} z| < \delta$ and some $\delta > 0$ independent of $x$. In other words, $F(z_d)$ is increasing for $|z| \leq \frac{\delta}{\sqrt{h}}$. Remember we cut of the domain so that we have $|z| \leq R_h = \sqrt{6 |\log h|}$. This means for $h$ small enough it holds that $F(z_d)$ is increasing for all $z$ in our cut-off domain. Hence, $F$ can only have one zero in our domain. 

Furthermore, plugging $z_d = \mathcal{O}(\sqrt{h}|z|^2)$ back into~\eqref{eq:zero_sign_level} and remembering that due to our cut-off it holds $\mathcal{O}(h|z|^4) = \mathcal{O}(h\, (3|\log(h)|)^2) = o(\sqrt{h})$, gives the looked-for characterization of the unique zero
\begin{equation*}
  z_d = -\frac{\sqrt{h}}{2} z'^\intercal \nabla'^2 u_0(x) z' + \mathcal{O}(h |z|^4) = -\frac{\sqrt{h}}{2} z'^\intercal \nabla'^2 u_0(x) z' + o(\sqrt{h}) =: g_h(z').
\end{equation*}

Recall that $z_d = \mathcal{O}(\sqrt{h}|z|^2) = \mathcal{O}(h^{\frac{1}{4}})$. Now using the properties of the heat kernel we have
\begin{align*}
  &\lim_{h \downarrow 0} \frac{1}{\sqrt{h}} \int_{ B_{R_h}(0)} G_1(0,z) \sign(u_0(x) - u_0(x + \sqrt{h} z)) \,dz \\
  &= \lim_{h \downarrow 0} \frac{1}{\sqrt{h}} \int_{B^{d-1}_{R_h - \mathcal{O}(h^{\frac{1}{4}})}(0)}G_1^{d-1}(0,z') \int_{-(R_h -|z'|)}^{R_h -|z'|} G_1^1(0,z_d) \sign(u_0(x) - u_0(x + \sqrt{h} z)) \,d z_d \, dz'.
\end{align*}
As the zero $g_h(z')$ is in $[-(R_h -|z'|), R_h -|z'|]$ we have by symmetry of $G_1^1$
\begin{align*}
  &\int_{-(R_h -|z'|)}^{R_h -|z'|} G_1^1(0,z_d) \sign(u_0(x) - u_0(x + \sqrt{h} z)) \,dz_d\\ 
  &= -\int_{-(R_h - |z'|)}^{g_h(z')} G_1^1(0,z_d) \, d z_d + \int_{g_h(z')}^{R_h - |z'|} G_1^1(0,z_d) \, d z_d\\
  &= -2 \int_{0}^{g_h(z')} G_1^1(0,z_d) \, d z_d\\
  &= -\erf(\frac{g_h(z')}{2}).
\end{align*}
Furthermore, Taylor-expanding the error function gives
\begin{equation*}
  \erf(\frac{g_h(z')}{2}) = \erf(0) + \erf\nolimits'(0) \frac{g_h(z')}{2} + \mathcal{O}(|g_h|^2) = 2c_0 \frac{g_h(z')}{2} + o(\sqrt{h}).
\end{equation*}
Combing the previous computations leaves us with 
\begin{align*}
  &\lim_{h \downarrow 0} \frac{1}{\sqrt{h}} \int_{ B_{R_h}(0)} G_1(0,z) \sign(u_0(x) - u_0(x + \sqrt{h} z)) \,dz \\
  &=\lim_{h \downarrow 0} \frac{1}{\sqrt{h}} \int_{B^{d-1}_{R_h - \mathcal{O}(h^{\frac{1}{4}})}(0)}G_1^{d-1}(0,z') \big(-c_0 g_h(z') + o(\sqrt{h})\big)\, dz'\\
  &= \lim_{h \downarrow 0} \int_{B^{d-1}_{R_h - \mathcal{O}(h^{\frac{1}{4}})}(0)} G_1^{d-1}(0,z') \big( \frac{c_0}{2} z'^\intercal \nabla'^2 u_0(x) z' + o(1)\big)\, dz'\\
  &= \lim_{h \downarrow 0} \frac{c_0}{2} \int_{\R^{d-1}} G_1^{d-1}(0,z')  z'^\intercal \nabla'^2 u_0(x) z'\, dz' + o(1),
\end{align*}
where we used in the last equation the exponential decay of the heat kernel (see~\eqref{eq:cut_off_u_tilde} for more details).
The symmetry of the heat kernel yield
\begin{align*}
    \int_{\R^{d-1}} z_i z_j G_1^{d-1}(0, z') \,dz' = \delta_{ij} \int_{\R} z_1^2 G_1^1(0, z_1) \, d z_1
\end{align*}
and
\begin{align*}
    \int_{\R} - 2 z_1 \partial_{z_1} G_1^1(0,z_1) \, dz_1 = \int_{\R} 2 G_1^1(0, z_1) \, dz_1 = 2
\end{align*}
and hence
\begin{equation*}
    \int_{\R^{d-1}} z_i z_j G_1^{d-1}(0, z') \, dz' = 2 \delta_{ij}.
\end{equation*}
This let's us compute
\begin{align*}
   \int_{\R^{d-1}} G_1^{d-1}(0,z')  z'^\intercal \nabla'^2 u_0(x) z'\, dz' &= \sum_{i = 1}^{d-1} \int_{\R} z_i^2 \partial_{x_i} \partial_{x_j}u_0(x) G_1^{d-1}(0, z') \,dz'\\
   &= 2 (\Delta u_0(x) - \partial_{x_d}\partial_{x_d} u_0(x)).
\end{align*}
Note, as $\nabla u_0(x) = e_d$ we also have
\begin{equation*}
  \Delta u_0(x) -  \partial_{x_d}\partial_{x_d} u_0(x) = \Delta u_0(x) - \frac{\nabla u_0(x)^\intercal \nabla^2 u_0(x) \nabla u_0(x)}{|\nabla u_0(x)|^2} = \nabla \cdot \left(\frac{\nabla u_0(x)}{|\nabla u_0(x)|} \right)|\nabla u_0(x)|.
\end{equation*}
Combing the above observations yields the claimed~\eqref{eq:H_h_to_H}.

Denote by
\begin{align*}
  \Omega_- &:= \{x \in \R^d: d(x) \leq - \delta_0\},\\
  \Omega_+ &:= \{x \in \R^d: d(x) \geq + \delta_0\},\\
  \Omega_{:)} &:= \{x \in \R^d:-\delta_0 < d(x) < \delta_0\},
\end{align*}
the sets of points with vanishing gradient $\Omega_-$ and $\Omega_+$, as well as the set of not-vanishing gradient $\Omega_{:)}$.  We now claim that 
\begin{align}\label{eq:conv_to_per}
  \lim_{h \downarrow 0}\int_{\Omega_- \cup \Omega_+} |H_h(x)|\,dx = \Per(\Omega_-) + \Per(\Omega_+)
\end{align}  
and 
\begin{align}\label{eq:conv_to_H}
    \lim_{h \downarrow 0}  H_h(x) = 2 c_0 \nabla \cdot \left( \frac{\nabla u_0}{|\nabla u_0|} \right) \quad \quad \text{uniformly on $\Omega_{:)}$}.
\end{align}
Showing this will be enough as $\Omega_-$ and $\Omega_+$ have compact $C^3$ boundary by construction and are thus sets of finite perimeter; furthermore, it holds $|\nabla u_0| =1$, $\phi' \circ d = 1$, $\phi''\circ d =0$ on $\Omega_{:)}$ so that
\begin{equation*}
  \nabla \cdot \left( \frac{\nabla u_0}{|\nabla u_0|} \right) = \Delta u_0 = \Delta d \ \phi' \circ \delta + |d|^2 \phi'' \circ d = \Delta d \quad \quad \text{ on } \Omega_{:)}.
\end{equation*}
This yields with a generic constant $C$
\begin{align*}
  \sup_{h \in (0,1]} \int_{\R^d}|H_h| \,dx &\leq C + \Per(\Omega_-) + \Per(\Omega_+) + 2c_0 \int_{\Omega_{:)}}(\nabla \cdot \left( \frac{\nabla u_0}{|\nabla u_0|} \right) + C)\,dx\\
  & \leq C + \Per(\Omega_-) + \Per(\Omega_+)+ 2c_0|\Omega_{:)}|(C + \|\nabla^2 d\|_\infty) \\
  &< \infty.
\end{align*}
 
So, it remains to prove~\eqref{eq:conv_to_per} and~\eqref{eq:conv_to_H}. Let us first prove~\eqref{eq:conv_to_per} and concentrate on the set $\Omega_-$ as the argument for $\Omega_+$ is analogous:

As $u_0$ is constantly equal to $-1$ on $\Omega_-$ it holds for all $x, y \in \Omega_-$ and $z \in \Omega_-^c$
\begin{equation*}
  \sign(u_0(x) - u_0(y)) = 0 \quad \quad \text{ and } \quad \quad \sign(u_0(x) - u_0(z)) = -1.
\end{equation*}
This lets us compute
\begin{align*}
  \int_{\Omega_-} |H_h(x)|\,dx = \frac1{\sqrt{h}}\int_{\Omega_-} \int_{\Omega_-^c} G_h(x,y)\,dx.
\end{align*}
The right-hand side is exactly the thresholding energy which is known to converge to the perimeter for sets of finite perimeter, see for example~\cite[Lemma A.3]{EsedogluOtto}.

In order to prove~\eqref{eq:conv_to_H} we want to apply the uniform convergence~\eqref{eq:H_h_to_H}. To do this, fix $x \in \Omega_{:)}$. We need to ensure that $|\nabla u_0(x)|$ is bounded from below, $|\nabla^2 u_0|$ is bounded from above and $u_0 \in C^3(B_{\delta_0}(x))$. We first notice that $|\nabla u_0| = |\phi'(d)| |\nabla d| = \delta_0$; thus the gradient is bounded from below. Furthermore, the Hessian of $u_0$ is bounded from above on $\Omega_{:)}$ as $\nabla^2 u_0 = \nabla (\phi'(d) \nabla d) = \phi''(d) + \phi'(d) \nabla^2 d = \nabla^2 d$ and $\|\nabla^2 d\|_{L^\infty(B_{\delta_0}(x))} < \infty$.

As for $y \in \Omega_+$ (and similarly for $\Omega_-$) it always holds $\sign(u_0(x) - u_0(y)) = -1$ we can thus replace $u_0$ by any function that is smaller on $\Omega_+$ with out changing the $\sign$. But now we have for $\sqrt{h}R_h \leq \delta_0$ that
\begin{equation*}
   \frac{1}{\sqrt{h}} \int_{B_{\sqrt{h}R_h}(x)} G_h(x,y) \sign(u_0(x) - u_0(y)) \,dy = \frac{1}{\sqrt{h}} \int_{B_{\sqrt{h}R_h}(x)}  G_h(x,y) \sign(\tilde{u}_0(x) - \tilde{u}_0(y)) \,dy,
\end{equation*}
where $\tilde{u}_0 := 2 \phi(\frac{u_0}{2})$.
So $\tilde{u}_0$ is in $C^3(B_{\sqrt{h}R_h }(x))$ for $\sqrt{h}R_h \leq \delta_0$. Thus, we can apply~\eqref{eq:H_h_to_H} to obtain the, for all $x \in \Omega_{:)}$ uniform, convergence
\begin{equation}\label{eq:u_tilde}
  \lim_{h \downarrow 0}\frac{2}{\sqrt{h}} \int_{B_{\sqrt{h}R_h}(x)} G_h(x,y) \sign(u_0(x) - u_0(y)) \,dy = 2c_0\nabla \cdot \left(\frac{\nabla \tilde{u}_0(x)}{|\nabla \tilde{u}_0(x)|}\right) =  2c_0 \nabla \cdot \left(\frac{\nabla u_0(x)}{|\nabla u_0(x)|}\right).
\end{equation}

At last, we cut off the far field term at the radius $R_h = \sqrt{6|\log (h)|}$; this is possible as 
\begin{align}
  &\lim_{h \downarrow 0}\left|\frac{1}{\sqrt{h}} \int_{\R^d \setminus B_{\sqrt{h}R_h}(x)} G_1(x,y) \sign(u_0(x) - u_0(y)) \,dz\right| \nonumber\\
  &=
  \lim_{h \downarrow 0}\left|\frac{1}{\sqrt{h}} \int_{\R^d \setminus B_{R_h}(0)} G_1(0,z) \sign(u_0(x) - u_0(x + \sqrt{h} z)) \,dz\right|  \nonumber\\
  &\lesssim \lim_{h \downarrow 0}\frac{1}{\sqrt{h}} e^{-\frac{R_h^2}{8}}= \lim_{h \downarrow 0}\frac{h^{\frac{3}{4}}}{\sqrt{h}} = 0 \label{eq:cut_off_u_tilde}.
\end{align}
Combing~\eqref{eq:u_tilde} with~\eqref{eq:cut_off_u_tilde} yields~\eqref{eq:conv_to_H}.

It remains to prove~\eqref{eq:energy_bound_u_0}. To that aim we use that  $u_0$ is constant outside $B_L(0)$ to compute 
\begin{align*}
    E_h(u_0) = \frac{2}{\sqrt{h}} \int_{B_{L}(0)} \int_{\R^d} G_h(x,y) |u_0(x) - u_0(y)|\, dy \, dx.
\end{align*}
Next we use a coordinate transform $z = \frac{y-x}{\sqrt{h}}$ to obtain
\begin{equation*}
      \frac{2}{\sqrt{h}} \int_{B_{L}(0)} \int_{\R^d} G_h(x,y) |u_0(x) - u_0(y)|\, dy \, dx = \frac{2}{\sqrt{h}} \int_{B_L(0)} \int_{\R^d} G_1(0,z) |u_0(x) - u_0(x + \sqrt{h}z)|\, dz \, dx.
\end{equation*}
Finally, we use that $u_0$ is $1$-Lipschitz to get the control
\begin{multline*}
     \frac{2}{\sqrt{h}} \int_{B_L(0)} \int_{\R^d} G_1(0,z) |u_0(x) - u_0(x + \sqrt{h}z)|\, dz \, dx \\\leq 2 \int_{B_L(0)} \int_{\R^d} G_1(0,z) |z|\, dz \, dx  = 4c_0|B_L(0)| < \infty.
\end{multline*}
Combining the previous computations yields~\eqref{eq:energy_bound_u_0} and finishes this proof.
\end{proof}

\section*{Acknowledgments}
This project is funded by the Deutsche Forschungsgemeinschaft (DFG, German Research Foundation) under Germany's Excellence Strategy EXC 2181/1 - 390900948 (the Heidelberg STRUCTURES Excellence Cluster). The authors would like to thank the BANFF International Research Station for founding a research stay during the workshop ``Mathematical Analysis of Adversarial Machine Learning (25w5469)'' at Casa Matem\'atica Oaxaca (CMO) at which this project started.

\section*{AI-Declaration}
Most of the manuscript was written without the use of AI. The only usage was in parts of the proof of Lemma~\ref{lem:mon_time_derivative} (namely the argument~\eqref{eq:partial_t_contraction}--\eqref{eq:difference_quotient_second_order}) which is based on an interaction with ChatGPT Free Tier.
\printbibliography
\end{document}